\documentclass[11pt]{article}
\usepackage{amsmath}
\usepackage{amsfonts,color}
\usepackage{mathrsfs}
\usepackage{amsfonts,openbib}
\usepackage{amsmath,amsthm,amssymb,
mathrsfs,amsfonts,cite}
\usepackage{times,lineno}
\usepackage{enumerate}
\usepackage{geometry}

\allowdisplaybreaks
\def\titlerunning#1{\gdef\titrun{#1}}
\makeatletter
\def\author#1{\gdef\autrun{\def\and{\unskip, }#1}\gdef\@author{#1}}

\makeatother

\def\keywords#1{\par\medskip
\noindent\textbf{Keywords.} #1}
\usepackage{makeidx}
\newtheorem{theorem}{Theorem}[section]
\newtheorem{corollary}[theorem]{Corollary}
\newtheorem{lemma}[theorem]{Lemma}

\newtheorem{proposition}[theorem]{Proposition}
\theoremstyle{definition}

\newtheorem{remark}[theorem]{Remark}

\newcommand{\R}{\mathbb R}
\newcommand{\Id}{\mathrm{Id}}
\newcommand{\dd}{\,\mathrm dx}

\newcommand{\Pcal}{\mathcal P}
\newcommand{\Ncal}{\mathcal N}
\newcommand{\Ecal}{\mathcal E}
\newcommand{\Fcal}{\mathcal F}
\newcommand{\Gform}{\mathfrak g}
\newcommand{\norm}[2]{\left\|#1\right\|_{#2}}
\newcommand{\ip}[2]{\left\langle #1,#2\right\rangle}
\newcommand{\HH}{\mathsf H}
\newcommand{\VV}{\mathsf V}
\newcommand{\XX}{\mathsf X}
\newcommand{\grad}{\nabla}
\newcommand{\supp}{\operatorname{supp}}

\newcommand{\dual}{\mathsf V'}
\numberwithin{equation}{section}
\allowdisplaybreaks

\makeatletter
\usepackage[usenames]{xcolor}   
\usepackage{hyperref} 			
\definecolor{ForestGreen}{rgb}{0.15,0.416,0.18}
\definecolor{EgyptBlue}{rgb}{0.063,0.2,0.65}
\hypersetup{
	colorlinks=true,
	linkcolor=EgyptBlue,
   citecolor=EgyptBlue,
   urlcolor=ForestGreen
}

\begin{document}
\pagestyle{plain}
\bibliographystyle{plain}
\titlerunning{AFA}
\begin{center}
{\Large\bfseries Global well-posedness for a damped two-dimensional fluid-beam system\par}
\vspace{1.2em}
{\large Weihua Wang\textsuperscript{1,*}\par}
\vspace{0.35em}
{\small School of Mathematical Science,\par}
{University of Sciences and Technology of China,\par}
{\small Hefei, Anhui 230000, P.R. China.\par}
{\small E-mail: \href{mailto:wwh133@ustc.edu.cn}{wwh133@ustc.edu.cn}\par}
\vspace{1.0em}
{\large Lu Yang\textsuperscript{2}\par}
\vspace{0.35em}
{\small Engineering University of the Chinese People's Armed Police Force,\par}
{\small Xi'an, Shaanxi 710086, P.R. China.\par}
{\small E-mail:\href{mailto:yanglu19970729@163.com}{yanglu19970729@163.com}\par}
\end{center}

\vspace{0.8em}
\begingroup
\renewcommand{\thefootnote}{\fnsymbol{footnote}}
\footnotetext[1]{Corresponding author:
E-mail: wwh133@ustc.edu.cn}
\endgroup
\begin{abstract}
We study the normalized surface system of an irrotational incompressible fluid of infinite depth beneath an elastic Euler--Bernoulli beam with structural damping. For every fixed damping coefficient $\gamma>0$, we establish global well-posedness for arbitrary initial data in $H^2(\R)\times\bigl(\dot H^{1/2}(\R)\cap\dot H^1(\R)\bigr)$. The main novelty is that the admissible data include finite energy initial potentials for which neither $\psi_0$ nor $u_0=\psi_0+G(\eta_0)\psi_0$ has an $L^2$ representative. We also prove the exact energy dissipation identity at the regularity of the constructed solutions. The main analytical difficulties are the possible failure of $L^2$ integrability at low frequencies and the justification of energy differentiation at the solution regularity. To overcome these difficulties,
we fix the initial low frequency background and establish local well-posedness by using homogeneous Dirichlet-Neumann estimates and a high frequency Fourier symmetrizer. An energy chain rule verifies the dissipation identity, together with coercivity, yields global continuation. Moreover, local uniqueness and stability estimates prove global uniqueness in the stated affine solution class and Lipschitz dependence on bounded sets over every finite time interval.
\end{abstract}
\keywords{Fluid-structure interaction;
Structural damping; Dirichlet-Neumann operator; Global well-posedness.
\newline
\textbf{Mathematics Subject Classification: 35Q35; 74F10; 35A01 }}
\section{Introduction}\label{sec:introduction}
\subsection{Description of the problem}
\noindent

We consider an ideal fluid of infinite depth whose free boundary is an elastic beam. Along the unbounded horizontal axis, the interface and fluid domain are
\begin{equation*}
  \Gamma(t)=\{(x,\eta(t,x)):x\in\R\},\qquad
  \Omega(t)=\{(x,y)\in\R^2:y<\eta(t,x)\}.
\end{equation*}
Incompressibility and irrotationality allow us to write the fluid velocity in terms of a harmonic potential:
\begin{equation}\label{1.1}
  \mathbf v=\nabla_{x,y}\Psi,\qquad
  \Delta_{x,y}\Psi=0\quad\text{in }\Omega(t).
\end{equation}
We define the surface potential by
\begin{equation}\label{1.2}
  \psi(t,x)=\Psi(t,x,\eta(t,x)).
\end{equation}
The bulk potential $\Psi$ is the variational harmonic extension of $\psi$ with finite Dirichlet energy, unique modulo additive constants. The velocity $\nabla_{x,y}\Psi$ is therefore uniquely determined.

The beam and fluid share the same normal velocity at the interface. Let
\[
  \mathbf n_\eta=
  \frac{(-\eta_x,1)}{\sqrt{1+\eta_x^2}}
\]
denote the outward unit normal to $\Omega(t)$. The kinematic boundary condition is then
\begin{equation}\label{1.3}
  \eta_t=\sqrt{1+\eta_x^2}\,
       \mathbf v|_{y=\eta}\cdot\mathbf n_\eta.
\end{equation}
The fluid pressure drives the beam at the interface. With zero exterior pressure, the beam's transverse displacement satisfies the linear Euler-Bernoulli equation with structural damping:
\begin{equation}\label{1.4}
  \rho_m\eta_{tt}-\gamma\partial_x^2\eta_t
       +\sigma\partial_x^4\eta=P|_{y=\eta}.
\end{equation}
Here $\rho_m>0$ denotes the beam mass per unit reference length, $\sigma>0$ its flexural rigidity, and $\gamma>0$ the structural damping coefficient.

The fluid satisfies the incompressible Euler equations under gravity:
\begin{equation}\label{1.5}
\left\{
\begin{aligned}
  &\mathbf v_t+(\mathbf v\cdot\nabla_{x,y})\mathbf v
       +\rho_f^{-1}\nabla_{x,y}P=-g_0\mathbf e_y
       &&\text{in }\Omega(t),\\
  &\operatorname{div}_{x,y}\mathbf v=0
       &&\text{in }\Omega(t),\\
  &\operatorname{curl}_{x,y}\mathbf v=0
       &&\text{in }\Omega(t),
\end{aligned}
\right.
\end{equation}
where $P$ is the fluid pressure, $\rho_f>0$ is the constant fluid density, $g_0>0$ is the gravitational acceleration, and $\mathbf e_y=(0,1)$.

We express the surface evolution through the Dirichlet-Neumann operator
\[
  G(\eta)\psi
  =(\Psi_y-\eta_x\Psi_x)|_{y=\eta},
\]
where $\Psi$ is the variational harmonic extension of $\psi$. Since this definition uses the unnormalized normal flux, boundary pairings use the horizontal measure $\mathrm dx$, and \eqref{1.3} becomes $\eta_t=G(\eta)\psi$.

We choose the temporal gauge of the velocity potential to remove the time-dependent constant in Bernoulli's equation. Taking its boundary trace and substituting the beam law \eqref{1.4}, we obtain
\begin{equation}\label{1.6}
\left\{
\begin{aligned}
  &\eta_t=G(\eta)\psi,\\
  &\psi_t+g_0\eta
  +\frac{\rho_m}{\rho_f}\eta_{tt}
  -\frac{\gamma}{\rho_f}\partial_x^2\eta_t
  +\frac{\sigma}{\rho_f}\partial_x^4\eta
  +\frac12\psi_x^2
  -\frac{(G(\eta)\psi+\eta_x\psi_x)^2}
         {2(1+\eta_x^2)}=0.
\end{aligned}
\right.
\end{equation}
We henceforth consider the normalized model with $\rho_f=\rho_m=\sigma=g_0=1$ and a fixed damping coefficient $\gamma>0$. Introducing
\begin{equation}\label{1.7}
  h=\eta_t,\qquad u=\psi+h,\qquad B=1+\partial_x^4,
\end{equation}
we obtain the surface system
\begin{equation}\label{1.8}
\begin{cases}
  \eta_t=h=G(\eta)\psi,\\
  u_t-\gamma\partial_x^2h+B\eta+\Ncal(\eta,\psi)=0,\\
  u=\psi+G(\eta)\psi,
\end{cases}
\end{equation}
where
\begin{equation}\label{1.9}
  \Ncal(\eta,\psi)
  =\frac12\psi_x^2
  -\frac{(G(\eta)\psi+\eta_x\psi_x)^2}
         {2(1+\eta_x^2)}.
\end{equation}
The operator $G(\eta)$ carries the nonlocal dependence on the fluid domain, while $u_t=\psi_t+\eta_{tt}$ incorporates beam inertia. The reduction for smooth solutions is verified in Section~\ref{sec:preliminaries}.

For viscous moving boundaries, Chambolle et al.~\cite{CDEG} studied a Navier-Stokes fluid coupled to an elastic plate, including the two dimensional fluid-membrane case. Grandmont and Hillairet~\cite{GH} obtained global strong solutions for a periodic viscous fluid beneath a viscoelastic beam and ruled out finite-time contact with the fixed bottom. Related work addresses deformable walls and shells~\cite{CCS,CS,LR2014,MC2013,MS2022}, large deformations~\cite{BKS2024}, viscous free surfaces with generalized surface energies~\cite{RT}, and perfectly elastic beams coupled to Navier-Stokes flow~\cite{SS2025}; broader accounts appear in \cite{Canic,Canic2020,GLN2014}. These viscous models benefit from regularity supplied by fluid viscosity, which is absent here.

For irrotational flow, harmonic extension reduces the bulk fluid problem to boundary data, while the Dirichlet-Neumann operator $G(\eta)$ carries the dependence on interface geometry. This approach originates in Zakharov's Hamiltonian formulation~\cite{Zakharov} and the surface formulation of Craig and Sulem~\cite{CraigSulem}. Water-wave analyses developed the operator's paralinearization and shape dependence~\cite{ABZ2011,ABZ2014,AM2009,CSS,Lannes2005,Lannes}. Elastic interfaces also occur in steady periodic waves beneath nonlinear membranes~\cite{BT2011} and in shell based models of nonlinear hydroelastic waves~\cite{PT}. For time-dependent hydroelastic interfaces, Ambrose and Siegel~\cite{AS2017} proved short time well-posedness in Sobolev spaces for a two dimensional potential flow model with a massless elastic membrane. Liu and Ambrose~\cite{LA2017} subsequently incorporated membrane inertia in a two fluid interfacial model and established well-posedness under a solvability condition for an associated integral equation. These results concern undamped elastic-sheet dynamics, whereas the present model consists of a single inviscid fluid of infinite depth beneath a structurally damped Euler-Bernoulli beam on $\R$.

Among inviscid results for undamped elastic free surfaces, Alazard, Kukavica, and Tuffaha~\cite{AKT} constructed global weak solutions for periodic, finite energy data; uniqueness in that class was not established in their work. On the whole line, Alazard, Shao, and Yang~\cite{ASY} established global well-posedness for $(\eta_0,u_0)\in H^2(\R)\times L^2(\R)$. They recast the undamped surface equations as a Schr\"odinger type evolution and combined dispersive estimates with low-regularity Dirichlet--Neumann estimates, obtaining continuous dependence and propagation of higher regularity.

The condition $u_0\in L^2(\R)$ in \cite[Theorem~1.1]{ASY} imposes an integrability requirement that is not implied by finite mechanical energy. Indeed, at a flat interface, the Fourier weight of $u$ in the kinetic energy vanishes at zero frequency. Finite energy initial data may therefore have no $L^2$ representative for either $\psi_0$ or $u_0$. For the damped system considered here, we establish global well-posedness in the natural energy space defined below, including these non-$L^2$ low frequency data. We also prove the exact energy dissipation identity at the regularity of the constructed solutions. Structural damping provides the high frequency estimate needed for local construction, while the energy identity and coercivity yield global continuation. The estimates depend on $\gamma$ and are not asserted to be uniform as $\gamma\downarrow0$.

\subsection{Main result}
\noindent

The mechanical energy determines the natural phase space. Let $\Lambda=|D|$. At a flat interface, $G(0)=\Lambda$ and $h=\Lambda\psi$, so the fluid and beam kinetic energies are $\frac12\|\psi\|_{\dot H^{1/2}}^2$ and $\frac12\|\psi\|_{\dot H^1}^2$, respectively. These terms lead to the homogeneous potential space
\begin{equation}\label{1.10}
  \Pcal=\dot H^{1/2}(\R)\cap\dot H^1(\R),\qquad
  \|\psi\|_{\Pcal}^2=
  \|\psi\|_{\dot H^{1/2}}^2+\|\psi_x\|_{L^2}^2.
\end{equation}
Here $\Pcal$ is the completion, under the norm in \eqref{1.10}, of
\[
\bigl\{\varphi\in\mathcal S(\R):
\operatorname{supp}\widehat\varphi
\subset\R\setminus\{0\}\bigr\}.
\]
Here and below, $\dot H^{1/2}(\R)$ denotes the completion of the test class displayed above under the norm
\[
\|\varphi\|_{\dot H^{1/2}}^2
=\int_\R |\xi|\,|\widehat\varphi(\xi)|^2\,\mathrm d\xi,
\]
and $\dot H^{-1/2}(\R)$ denotes its Hilbert dual. Elements of $\dot H^{1/2}(\R)$, and hence of $\Pcal$, are interpreted as tempered distributions modulo additive constants, with their action on zero-mean Schwartz test functions defined by Fourier duality. The homogeneous estimates proved below imply $h=G(\eta)\psi\in L^2$ for $\eta\in H^2$ and $\psi\in\Pcal$. If $u=\psi+h$ has an $L^2$ representative, then $\psi=u-h\in L^2$; together with $\psi_x\in L^2$, this gives $\psi\in H^1$. Consequently, $\psi=(\Id+G(\eta))^{-1}u$, consistently with the resolvent bound $L^2\to H^1$.

To accommodate homogeneous data, we separate the low frequency component of the initial potential from the evolution variable. Define the fixed background
\begin{equation}\label{1.11}
  f=\mathbb{P}\psi_0,
\end{equation}
where $\mathbb{P}$ projects onto $|\xi|\le1$, and write $u=f+v$ and $\psi=f+\zeta$. Since the high frequency part of $\psi_0$ lies in $H^1$ and the initial beam velocity lies in $L^2$, the initial remainder $v_0=u_0-f$ belongs to $L^2$. The correction $\zeta=\psi-f$ remains in $H^1$ in the solution class below, allowing an $L^2$ evolution without an additional integrability condition on $\psi_0$.

All function spaces below are over $\R$. The physical unknowns and initial data are real-valued; complexification is used only for Fourier and operator arguments. Set $\Ecal=H^2\times\Pcal$. The solution class is defined by
\begin{equation}\label{1.12}
\begin{aligned}
\eta&\in C([0,T];H^2)\cap L^2(0,T;H^3),\\
v:=u-f&\in C([0,T];L^2)\cap L^2(0,T;H^1),\\
\zeta:=\psi-f&\in C([0,T];H^1),
\end{aligned}
\end{equation}
where $f=\mathbb{P}\psi_0$ is fixed on $[0,T]$; at a restart, a new low-frequency background may be used. Since $u=\psi+h$, we have $h=v-\zeta$ and
\[
h\in C([0,T];L^2)\cap L^2(0,T;H^1).
\]
\noindent\textbf{Representatives and the solution concept.}
Fix a distributional representative of the time-independent background $f=\mathbb{P}\psi_0$. The high frequency part $(\Id-\mathbb{P})\psi_0$ has a unique $H^1$ representative, so this choice also fixes the representative of the initial potential. For a solution in \eqref{1.12}, define the time-dependent representatives by $\psi=f+\zeta$ and $u=f+v$. All equations in \eqref{1.8}, including the time derivative of $u$, are required to hold in $\mathcal D'((0,T)\times\R)$ for these representatives. The initial conditions are attained in $H^2\times\Pcal$, with $\zeta(0)=(\Id-\mathbb{P})\psi_0$. Replacing $f$ by $f+C$ for a constant $C$ shifts $\psi$ and $u$ by the same time-independent constant and leaves $v$, $\zeta$, and $h$ unchanged. This is the remaining gauge freedom. At a restart, the representative of the new background is chosen so that its difference from the old background is the $H^1$ function specified in Lemma~\ref{lem:affine-tools}.

Define the natural energy by
\begin{equation}\label{1.13}
E(\eta,\psi)=\frac12\left(
\|\eta\|_{L^2}^2+\|\eta_{xx}\|_{L^2}^2
+\|G(\eta)\psi\|_{L^2}^2+\Gform_\eta(\psi,\psi)\right).
\end{equation}
Here $\Gform_\eta(\psi,\psi)$ denotes the Dirichlet energy $\int_{\Omega_\eta}|\nabla_{x,y}\Psi|^2\,dx\,dy$ of the variational harmonic extension $\Psi$ of $\psi$. For homogeneous potentials, this definition does not imply that $\psi$ and $G(\eta)\psi$ admit an ordinary $L^2$ pairing.

\begin{theorem}\label{thm:main}
Fix $\gamma>0$. For every initial datum $(\eta_0,\psi_0)\in\Ecal$, the surface system \eqref{1.8} admits a global distributional solution in the sense just defined. It is unique within the affine solution class \eqref{1.12}, with potentials identified modulo time independent additive constants. The solution belongs to this class on every finite interval $[0,T]$. Writing $E(t)=E(\eta(t),\psi(t))$, it satisfies
\begin{equation}\label{1.14}
E(t)+\gamma\int_s^t\|h_x(\tau)\|_2^2\,\mathrm d\tau=E(s),
\qquad 0\le s\le t<\infty.
\end{equation}
If $E_0=E(\eta_0,\psi_0)$, then
\begin{equation}\label{1.15}
\sup_{t\ge0}\left(
\|\eta(t)\|_{H^2}
+\|\psi(t)\|_{\Pcal}
+\|h(t)\|_{L^2}
\right)\le C(E_0).
\end{equation}
On every bounded subset of $\Ecal$ and for every finite $T$, the solution map into $C([0,T];\Ecal)$ is Lipschitz, with a constant depending only on $\gamma$, the subset bound, and $T$. The initial data need not be small.
\end{theorem}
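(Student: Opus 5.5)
The proof follows the strategy announced in the abstract: local well-posedness in the affine class \eqref{1.12} with a frozen low-frequency background, then an energy identity, then coercivity and continuation.

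\textbf{Local theory.} Fix $f=\mathbb{P}\psi_0$ and write the system \eqref{1.8} for $(\eta,v)$, with $\zeta$ recovered from $v$. From $u=\psi+G(\eta)\psi$ we get $(\Id+G(\eta))\zeta=v-G(\eta)f$. Since $f$ is band-limited and lies in $\Pcal$, the homogeneous Dirichlet--Neumann estimates give $G(\eta)f\in H^1$ for $\eta\in H^2$, with Lipschitz dependence on $\eta$. The resolvent bound $(\Id+G(\eta))^{-1}:L^2\to H^1$ then yields $\zeta\in H^1$ and $h=v-\zeta\in L^2$. The principal part of the $v$-equation is
\[
v_t-\gamma\partial_x^2 G(\eta)(\Id+G(\eta))^{-1}v+B\eta=\text{lower order},
\]
so at high frequency it behaves like $v_t+\gamma\Lambda^2 v+\partial_x^4\eta=\cdots$, coupled with $\eta_t\approx v$. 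This linear system is parabolic-dissipative. Symmetrizing with a high-frequency Fourier multiplier, namely the weighted energy $\|\Lambda^2\eta\|^2+\|v\|^2+\epsilon\ip{v}{\Lambda\eta}$-type terms, gives energy estimates with gain $L^2(0,T;H^3)\times L^2(0,T;H^1)$. The damping $\gamma$ absorbs the quadratic nonlinearity $\Ncal$ and the commutator errors from the shape derivative of $G(\eta)$, both of which cost at most one derivative at this regularity. A contraction in the class \eqref{1.12} on a short time $T=T(\|(\eta_0,\psi_0)\|_{\Ecal},\gamma)$ gives existence, uniqueness and Lipschitz dependence. The difference estimates use the shape-derivative formula for $G(\eta_1)-G(\eta_2)$ applied to $f+\zeta$.

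\textbf{Energy identity.} Formally, pairing the $u$-equation with $h$ and the $\eta$-equation with $B\eta$ gives \eqref{1.14}. The pairing $\ip{u_t}{h}$ is justified in the duality $L^2(0,T;H^{-1})\times L^2(0,T;H^1)$, since $u_t=v_t$ and $h\in L^2H^1$, using a Lions--Magenes chain rule. The fluid term $\frac{d}{dt}\frac12\Gform_\eta(\psi,\psi)$ together with the $\Ncal$ term reproduces the standard Zakharov-Hamiltonian cancellation. This step must be carried out through the shape derivative of $\Gform_\eta$, not through $\ip{\psi}{G\psi}$, which may be undefined. I would first prove the identity for mollified data or mollified solutions, where everything is smooth and $f$ is fixed, and then pass to the limit using the local Lipschitz stability. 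I expect this justification, the energy chain rule at the solution regularity with homogeneous $\psi$, to be the main obstacle.

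\textbf{Global continuation and stability.} Coercivity requires $E(\eta,\psi)\gtrsim\|\eta\|_{H^2}^2+\|\psi\|_{\Pcal}^2+\|h\|_{L^2}^2$ with constants depending on $E$ only. We have $\Gform_\eta(\psi,\psi)\ge c(\|\eta_x\|_{L^\infty})\|\psi\|_{\dot H^{1/2}}^2$, and $\|\eta_x\|_{L^\infty}$ is bounded by the $\eta$-part of $E$. The $\dot H^1$ bound follows from $\psi_x$ estimated via $h=G(\eta)\psi$ and ellipticity of $G(\eta)$ at high frequency. Together with \eqref{1.14}, this gives \eqref{1.15}.

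The local existence time depends only on the $\Ecal$-norm, and the restart background differs from the old one by an $H^1$ function by Lemma~\ref{lem:affine-tools}. Iterating therefore covers $[0,\infty)$ and keeps the solution in the fixed affine class on each $[0,T]$. Uniqueness and Lipschitz dependence on bounded sets over $[0,T]$ follow by chaining the local stability estimates finitely many times, with the number of steps controlled by $C(E_0)$.
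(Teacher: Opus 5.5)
Your local theory and continuation follow the paper's outline: frozen background $f=\mathbb{P}\psi_0$, reconstruction $\zeta=R_\eta(v-G(\eta)f)$, a flat damped principal part with a high-frequency symmetrizer, and restarts with an $H^1$ change of background. The gap is the energy identity \eqref{1.14} at the regularity \eqref{1.12}, which is what the global argument depends on and which you do not supply. A Lions--Magenes argument handles only quadratic functionals such as $\tfrac12\|b(D)\eta\|_2^2$ and $\tfrac12\|v\|_2^2$. The kinetic part $\tfrac12\|h\|_2^2+\tfrac12\Gform_\eta(\psi,\psi)$ depends non-quadratically on $Z=(b(D)\eta,v)$ through $G(\eta)$ and $R_\eta$. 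That $\langle u_t,h\rangle_{H^{-1},H^1}$ is well defined does not identify it with a time derivative. Differentiating $\Gform_{\eta(t)}(\psi(t),\psi(t))$ in time through the shape derivative would need $DG(\eta)[\eta_t]$. But $\eta_t=h$ lies only in $L^2_TH^1$, whereas \eqref{3.3} requires $k\in H^2$.

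Neither of your fallbacks closes this. With mollified data, the approximate solutions would have to be smooth and decaying enough for Proposition~\ref{prop:smooth-energy}. That requires a propagation-of-regularity theorem, with higher-order Dirichlet--Neumann estimates at higher graph regularity, which you do not have. Also, if $f$ is kept fixed as you propose, $\psi\notin L^2$ and the pairing $\int\psi\,G(\eta)\psi$ used in the smooth computation is unavailable, so the background must be approximated too. Time-mollified solutions are not solutions, so Lipschitz stability of the solution map says nothing about them. Passing to the limit there needs control of $\nabla E$ along $Z^\varepsilon$ in $L^2_TH^1$, which is exactly what is missing.

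The missing idea is the structure of the gradient (Lemma~\ref{lem:energy-gradient}). For fixed $f$, $E_f\in C^1(L^2(\R;\R^2))$ with $\nabla_ZE_f(Z)=Z+K_f(Z)$, where $K_f(Z)=(b(D)^{-1}\Ncal,-\zeta)$ is locally Lipschitz from $L^2$ into $H^1$. The paper proves this in four steps:
\begin{enumerate}
\item For $f\in H^1$, write the kinetic energy as $\tfrac12\langle u,(\Id-R_\eta)u\rangle$.
\item Differentiate $R_\eta$ in $\mathcal L(L^2,H^1)$ using the quadratic remainder \eqref{B22}.
\item Use $\tfrac12\langle\psi,DG(\eta)[k]\psi\rangle=\int k\Ncal$ only for $L^2$-directions of $Z$, so that $k=b(D)^{-1}a\in H^2$.
\item Remove the assumption $f\in H^1$ via $f_n=\mathbf 1_{\{|\xi|\ge1/n\}}(D)f$, with $E_{f_n}$ and $K_{f_n}$ converging uniformly on bounded sets.
\end{enumerate}
The rough direction $\eta_t=h$ then enters only through $\langle b(D)h,b(D)^{-1}\Ncal\rangle=\langle\Ncal,h\rangle$. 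The chain rule in $H^1\subset L^2\subset H^{-1}$ (Lemma~\ref{lem:chain}) then yields \eqref{1.14}. Without this you do not even have $E(t)\le E_0$, so neither \eqref{1.15} nor a uniform restart time follows.

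Three smaller corrections:
\begin{itemize}
\item The symmetrizer cross term must be of order $\langle v,\Lambda^2\eta\rangle$, as in $P_c$. The term $\langle v,\Lambda\eta\rangle$ only gives $\eta\in L^2_TH^{5/2}$, which is too weak to pair with the forcing $b(D)\zeta\in H^{-1}$.
\item No commutator or shape-derivative terms arise in the local theory. All $G(\eta)$- and $R_\eta$-dependent terms sit in the $H^{-1}$-valued forcing $\Fcal_f$, the factor $\sqrt T$ closes the contraction, and only $G(\eta)f\in L^2$ is needed.
\item A contraction gives uniqueness only in a ball. Uniqueness in the whole class \eqref{1.12} needs the fact that every distributional solution is mild (Lemma~\ref{lem:linear}), together with iteration of the difference estimate.
\end{itemize}
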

\begin{remark}
Let $\chi\in C_c^\infty(\R)$ be an even function supported near the origin and equal to one in a neighborhood of zero. For $1/2\le\alpha<1$, define $\psi_0$ by
\[
  \widehat{\psi_0}(\xi)=\chi(\xi)|\xi|^{-\alpha}.
\]
Then $\psi_0\in\Pcal$ because the defining homogeneous norms are finite near $\xi=0$ for $\alpha<1$, whereas $\psi_0\notin L^2$ because the $L^2$ integral diverges for $\alpha\ge1/2$. For the flat initial interface $\eta_0=0$, the initial energy is finite, yet
\[
  \widehat{u_0}(\xi)
  =(1+|\xi|)\widehat{\psi_0}(\xi),
\]
and hence $u_0=(1+\Lambda)\psi_0\notin L^2$. This example exhibits finite energy data outside the inhomogeneous $H^2(\R)\times L^2(\R)$ class considered in~\cite{ASY}.
\end{remark}

\subsection{ Strategy of the proof}
\noindent

We use the surface variable $u=\psi+\eta_t$ introduced in the undamped analysis of Alazard, Shao, and Yang~\cite{ASY}. At natural energy regularity, $u$ need not belong to $L^2$, and the quadratic nonlinearity $\Ncal$ is controlled in $L^1$ and hence in $H^{-1}$.

The fixed-graph elliptic theory starts from a variational harmonic extension for boundary data in $\dot H^{1/2}$. For an $H^1$ trace and $\eta\in H^2$, tangential regularity of the flattened equation yields an $L^2$ normal trace. A Rellich identity proves the equivalence of $\|G(\eta)\psi\|_{L^2}$ and $\|\psi_x\|_{L^2}$, while the resolvent $(\Id+G(\eta))^{-1}$ maps $L^2$ into $H^1$. We establish the shape derivative of $G(\eta)$ at the same graph regularity: although its boundary formula contains two terms initially defined only in $H^{-1}$, their sum is controlled in $L^2$ by cancellation. The fixed-graph estimates are proved in Appendix~\ref{app:elliptic}, and we extend them to homogeneous potentials in Section~\ref{sec:elliptic}.

For the evolution, we subtract $f=\mathbb{P}\psi_0$ and recover
\[
  \zeta=(\Id+G(\eta))^{-1}\bigl(v-G(\eta)f\bigr),
  \qquad h=v-\zeta.
\]
The homogeneous estimates give $G(\eta)f\in L^2$; since $v\in L^2$, the resolvent then gives $\zeta\in H^1$. The pair $Z=(B^{1/2}\eta,v)$ satisfies a semilinear system whose $H^{-1}$-valued forcing is locally Lipschitz in $Z$. A Fourier symmetrizer for the damped principal operator controls $Z$ in $C([0,T];L^2)\cap L^2(0,T;H^1)$ for forcing in $L^2(0,T;H^{-1})$. The resulting time-smallness factor yields a contraction without requiring $\psi\in L^2$.

Since the potential may have no $L^2$ representative, the energy identity requires justification at the constructed regularity. For each fixed background $f$, the energy is continuously Fr\'echet differentiable as a functional of $Z\in L^2(\R;\R^2)$, with $L^2$ gradient $Z+K_f(Z)$ and $K_f$ locally Lipschitz into $H^1(\R;\R^2)$. A chain rule in $H^1\subset L^2\subset H^{-1}$ then yields the energy dissipation identity.

Energy coercivity bounds the norms that determine the local lifespan uniformly along the solution. With a compatible background at each restart, this bound gives global continuation; iteration of local uniqueness and stability gives global uniqueness and Lipschitz dependence on every finite interval. These estimates require $\gamma>0$ and are not asserted to be uniform as $\gamma\downarrow0$. Moreover, since dissipation degenerates at low frequencies, the identity alone does not imply a quantitative decay rate.

We first derive the surface system and its flat linear scales in Section~\ref{sec:preliminaries}. Subsequently, the homogeneous estimates and affine evolution are established in Sections~\ref{sec:elliptic} and~\ref{sec:reduction}, respectively. Section~\ref{sec:local} establishes the local well-posedness. The energy identity and global continuation are proved in Section~\ref{sec:global}. Finally, Appendices~\ref{app:tools} and~\ref{app:elliptic} supply background compatibility and the fixed-graph elliptic theory.

\section{Preliminaries}\label{sec:preliminaries}
\noindent

We derive the boundary system and the energy law for smooth solutions, then identify the frequency scales used in the construction at natural energy regularity. All function spaces are over $\mathbb{R}$. We write $C_T X=C([0,T];X)$ and $L_T^p X=L^p(0,T;X)$, omitting the interval when it is clear. For Banach spaces $X$ and $Y$, we denote by $\mathcal L(X,Y)$ the space of bounded linear operators from $X$ to $Y$, equipped with the operator norm. Constants denoted by $C_M$ depend on the indicated bound $M$, while $C_{\gamma,M}$ also depends on the fixed damping coefficient $\gamma$.

\subsection{Derivation of the boundary system}
The surface formulation uses the unnormalized normal flux and pairs boundary quantities against $\mathrm{d}x$. The Dirichlet--Neumann operator is
\[
G(\eta)\psi
=\sqrt{1+\eta_x^2}\,\nabla\Psi\cdot \mathbf n_\eta
\quad\text{on } y=\eta,
\]
where
\[
\mathbf n_\eta=\frac{(-\eta_x,1)}{\sqrt{1+\eta_x^2}}
\]
is the unit normal to the graph. We now show how beam inertia and damping enter the equation for $u$.

\begin{proposition}\label{prop:boundary}
For a smooth irrotational incompressible fluid beneath a smooth graph, the kinematic condition, Bernoulli's equation in the zero temporal gauge, and the damped beam law \eqref{1.4} yield the surface system \eqref{1.8}. Conversely, a smooth solution of \eqref{1.8}, together with its harmonic extension, determines a pressure through Bernoulli's equation. The reconstructed velocity and pressure satisfy the Euler equations, while the surface system gives the kinematic condition and damped beam law.
\end{proposition}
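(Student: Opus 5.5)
The plan is to derive \eqref{1.8} by the classical Zakharov--Craig--Sulem reduction, using two pointwise identities for the gradient at the surface, and then to run the argument backwards. For smooth $\eta,\Psi$ with $\Delta\Psi=0$ in $\Omega(t)$, differentiating $\psi(t,x)=\Psi(t,x,\eta(t,x))$ in $x$ and $t$ gives
\[
\psi_x=\Psi_x+\eta_x\Psi_y,\qquad \psi_t=\Psi_t+\eta_t\Psi_y \quad\text{on } y=\eta .
\]
Together with $G(\eta)\psi=\Psi_y-\eta_x\Psi_x$, these determine the boundary gradient:
\[
\Psi_y=\frac{G(\eta)\psi+\eta_x\psi_x}{1+\eta_x^2},\qquad \Psi_x=\psi_x-\eta_x\Psi_y .
\]
The kinematic condition \eqref{1.3} is exactly $\eta_t=\Psi_y-\eta_x\Psi_x=G(\eta)\psi$.

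\textbf{Forward direction.} The Euler equations \eqref{1.5}, with $\mathbf v=\nabla\Psi$ and irrotationality, give
\[
\nabla\Bigl(\Psi_t+\tfrac12|\nabla\Psi|^2+P/\rho_f+g_0y\Bigr)=0 .
\]
Hence the bracket is a function of $t$ alone, which the temporal gauge sets to zero; this is Bernoulli's equation. Evaluating it at $y=\eta$ and substituting $\Psi_t=\psi_t-\eta_t\Psi_y$ turns the kinetic terms into
\[
\tfrac12\psi_x^2-\tfrac12(1+\eta_x^2)\Psi_y^2 ,
\]
which is $\Ncal(\eta,\psi)$ by \eqref{1.9}; this algebraic simplification is the only real computation. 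Eliminating $P|_{y=\eta}$ with the beam law \eqref{1.4} gives \eqref{1.6}. Normalizing constants and setting $h=\eta_t$, $u=\psi+h$, so that $u_t=\psi_t+\eta_{tt}$, gives the second line of \eqref{1.8} with $B=1+\partial_x^4$.

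\textbf{Converse.} Take a smooth solution $(\eta,\psi)$ of \eqref{1.8} and let $\Psi$ be the variational harmonic extension at each time. Time regularity of $\Psi$ should follow from the shape-derivative smoothness of the extension for smooth graphs, which I take as standard here. Define
\[
P:=-\rho_f\Bigl(\Psi_t+\tfrac12|\nabla\Psi|^2+g_0y\Bigr).
\]
Then $\mathbf v=\nabla\Psi$ is divergence and curl free, and taking the gradient of the definition of $P$ gives the momentum equation in \eqref{1.5} via the identity $(\mathbf v\cdot\nabla)\mathbf v=\nabla\tfrac12|\mathbf v|^2$ for curl-free fields. The first equation of \eqref{1.8} is the kinematic condition by the identity above. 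Reversing the trace computation shows that $P|_{y=\eta}$ equals $\rho_m\eta_{tt}-\gamma\partial_x^2\eta_t+\sigma\partial_x^4\eta$, which is \eqref{1.4}.

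The main obstacle is the bookkeeping rather than analysis. One must keep track of the additive-constant gauge in $\Psi$, which only affects Bernoulli's constant and is absorbed by the temporal gauge. One must also check that the time derivative of the trace commutes correctly with the moving boundary; this is handled by the chain rule identities above, valid for smooth data.
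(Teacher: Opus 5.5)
Your proposal is correct and follows the paper's proof essentially step for step. Both arguments use the chain-rule identities for the boundary velocities, the kinematic condition, Bernoulli's equation evaluated at $y=\eta$ with the kinetic terms reduced to $\Ncal$, substitution of the beam law, and, for the converse, a pressure defined by Bernoulli's equation. Your intermediate expression $\tfrac12\psi_x^2-\tfrac12(1+\eta_x^2)\Psi_y^2$ is exactly the paper's $\tfrac12(V^2+\beta^2)-\beta h$ after using $\eta_t=\beta-\eta_xV$.
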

\begin{proof}
Let $V=\Psi_x|_{y=\eta}$ and $\beta=\Psi_y|_{y=\eta}$. Applying the chain rule to $\psi(t,x)=\Psi(t,x,\eta(t,x))$ yields
\[
\psi_x=V+\eta_x\beta,\qquad
\psi_t=\Psi_t|_{y=\eta}+\beta\eta_t.
\]
The kinematic condition is given by
\[
\eta_t=\beta-\eta_x V=h.
\]
Thus, it holds that
\[
\beta=\frac{h+\eta_x\psi_x}{1+\eta_x^2},
\qquad
V=\psi_x-\eta_x\beta.
\]
With gravitational acceleration normalized to one, Bernoulli's equation takes the form
\[
\Psi_t+\tfrac12|\nabla\Psi|^2+y+P=0.
\]
Evaluating this equation at the interface and using $\psi_t=\Psi_t+\beta h$, we obtain
\[
\psi_t+\eta+P|_{y=\eta}
+\tfrac12(V^2+\beta^2)-\beta h=0.
\]
Substituting the expressions for $V$ and $\beta$ then yields
\[
\tfrac12(V^2+\beta^2)-\beta h
=\tfrac12\psi_x^2
-\frac{(h+\eta_x\psi_x)^2}{2(1+\eta_x^2)}
=\Ncal.
\]
Substituting
\[
P|_{y=\eta}
=\eta_{tt}-\gamma\eta_{txx}+\eta_{xxxx},
\]
and setting $u=\psi+\eta_t$, we obtain
\[
u_t-\gamma h_{xx}+B\eta+\Ncal=0.
\]
Together with the kinematic condition and the definition of $u$, this is \eqref{1.8}. Conversely, for a smooth surface solution, let $\Psi$ be its harmonic extension and define
\[
P=-\Psi_t-\tfrac12|\nabla\Psi|^2-y.
\]
Reversing the calculation recovers the damped beam law, while the first surface equation gives the kinematic condition. Taking the spatial gradient of Bernoulli's equation yields Euler's momentum equation; since $\Psi$ is harmonic, $\nabla\Psi$ is incompressible and irrotational. At energy regularity, we use the distributional surface formulation and the fixed-gauge representatives specified in Section~\ref{sec:introduction}. The reconstruction in this proposition establishes equivalence with the bulk Euler-beam system for smooth solutions; Theorem~\ref{thm:main} concerns the surface solution class \eqref{1.12}.
\end{proof}

\subsection{Mechanical energy and the dissipation law}
\noindent

For smooth solutions with sufficient spatial decay, the four terms in \eqref{1.13} represent gravitational potential, beam bending, beam kinetic, and fluid kinetic energy, respectively. By Green's identity,
\begin{equation}\label{2.1}
\Gform_\eta(\psi,\psi)
=\int_{\Omega_\eta}|\nabla\Psi|^2\,\mathrm{d}x\,\mathrm{d}y
=\int_{\mathbb{R}}\psi\,G(\eta)\psi\,\mathrm{d}x.
\end{equation}
The boundary integral on the right requires sufficiently regular, decaying traces. For homogeneous potentials as in \eqref{1.10}, we instead define the fluid energy by the variational Dirichlet form on the left.
\begin{proposition}
\label{prop:smooth-energy}
Every smooth solution of \eqref{1.8} with sufficient spatial decay satisfies
\begin{equation}\label{2.2}
\frac{\mathrm{d}}{\mathrm{d}t}E(\eta,\psi)
+\gamma\|\partial_x\eta_t\|_2^2=0.
\end{equation}
\end{proposition}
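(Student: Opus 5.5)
We differentiate the four pieces of $E$ separately and combine them using the equations in \eqref{1.8}. The key tool is the Hamiltonian structure of the Dirichlet form. For smooth decaying solutions, \eqref{2.1} gives $\Gform_\eta(\psi,\psi)=\int\psi\,G(\eta)\psi\,\mathrm dx$. Its time derivative is
\[
\frac{\mathrm d}{\mathrm dt}\int\psi\,G(\eta)\psi\,\mathrm dx
=2\int\psi_t\,G(\eta)\psi\,\mathrm dx+\int\psi\,\bigl(dG(\eta)[\eta_t]\psi\bigr)\,\mathrm dx,
\]
where we use the symmetry of $G(\eta)$ for fixed $\eta$. For the shape-derivative term we invoke the classical Lannes formula
\[
dG(\eta)[\delta]\psi=-G(\eta)(\delta\beta)-\partial_x(\delta V),
\qquad
\beta=\frac{G(\eta)\psi+\eta_x\psi_x}{1+\eta_x^2},\quad V=\psi_x-\eta_x\beta,
\]
taking $\delta=h=\eta_t$. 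Integrating by parts and using symmetry once more, we get
\[
\int\psi\,dG(\eta)[h]\psi\,\mathrm dx=\int h\bigl(-\beta\,G(\eta)\psi+V\psi_x\bigr)\mathrm dx.
\]
A direct algebraic check, of the same kind as in the proof of Proposition~\ref{prop:boundary}, gives $V\psi_x-\beta h=\psi_x^2-\beta(h+\eta_x\psi_x)=2\Ncal$. Hence $\tfrac12\tfrac{\mathrm d}{\mathrm dt}\Gform_\eta(\psi,\psi)=\int h\,\psi_t\,\mathrm dx+\int h\,\Ncal\,\mathrm dx$.

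The remaining terms are elementary. First,
\[
\tfrac12\tfrac{\mathrm d}{\mathrm dt}\bigl(\|\eta\|_2^2+\|\eta_{xx}\|_2^2\bigr)=\int h\,B\eta\,\mathrm dx,
\]
after integrating by parts twice, since $\eta_t=h$. Second,
\[
\tfrac12\tfrac{\mathrm d}{\mathrm dt}\|G(\eta)\psi\|_2^2=\tfrac12\tfrac{\mathrm d}{\mathrm dt}\|h\|_2^2=\int h\,h_t\,\mathrm dx.
\]
Adding everything and using $u=\psi+h$, we obtain
\[
\frac{\mathrm d}{\mathrm dt}E=\int h\,\bigl(u_t+B\eta+\Ncal\bigr)\mathrm dx.
\]
By the second equation of \eqref{1.8}, this equals $\gamma\int h\,h_{xx}\,\mathrm dx=-\gamma\|h_x\|_2^2$, which is \eqref{2.2} because $h=\partial_x\eta_t$'s primitive, i.e. $h_x=\partial_x\eta_t$.

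The main obstacle is justifying the shape-derivative identity $\int\psi\,dG(\eta)[h]\psi=\int h(V\psi_x-\beta G(\eta)\psi)$. For that we can avoid the operator formula entirely and work directly in the bulk. Differentiate $\int_{\Omega(t)}|\nabla\Psi|^2$ using the Reynolds transport theorem: the boundary velocity contributes $\int h\,|\nabla\Psi|^2|_{y=\eta}\,\mathrm dx$, and the interior term $2\int\nabla\Psi\cdot\nabla\Psi_t$ becomes, by Green's formula, $2\int\Psi_t|_{y=\eta}\,G(\eta)\psi\,\mathrm dx$. Then substitute $\Psi_t|_{y=\eta}=\psi_t-\beta h$ and $|\nabla\Psi|^2=V^2+\beta^2$. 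This reproduces $2\int h\psi_t+2\int h\bigl(\tfrac12(V^2+\beta^2)-\beta h\bigr)=2\int h\psi_t+2\int h\Ncal$, exactly as computed in Proposition~\ref{prop:boundary}.

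The assumed smoothness and spatial decay, including decay of $\Psi$ and $\Psi_t$ at depth, ensure that every boundary term at infinity vanishes and every integral converges.
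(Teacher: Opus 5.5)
Your proposal is correct and follows essentially the same route as the paper. You differentiate the Dirichlet form with the shape-derivative formula $DG(\eta)[k]\psi=-G(\eta)(k\beta)-\partial_x(kV)$, use symmetry and integration by parts to obtain $\tfrac12\int_\R\psi\,DG(\eta)[h]\psi\,\mathrm dx=\int_\R h\Ncal\,\mathrm dx$, and combine this with the structural and beam-kinetic terms and the second equation of \eqref{1.8}. Your supplementary Reynolds-transport computation is a valid independent check of that identity for smooth decaying data, since it reproduces the Bernoulli algebra of Proposition~\ref{prop:boundary}; the paper instead relies on the shape-derivative formula established in Theorem~\ref{thm:B-shape}.
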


\begin{proof}
Write $G=G(\eta)$, and let $V$ and $\beta$ be the boundary velocities defined in Proposition~\ref{prop:boundary}. For an interface variation $k$, the shape derivative of the Dirichlet--Neumann operator is
\[
DG(\eta)[k]\psi=-G(k\beta)-\partial_x(kV).
\]
The extension of this formula to real-valued $\eta,k\in H^2$ and $\psi\in H^1$ is proved in Theorem~\ref{thm:B-shape}. It follows from self-adjointness and integration by parts that
\begin{align*}
\frac12\int_\R\psi DG(\eta)[k]\psi\dd
&=\frac12\int_\R k(\psi_xV-h\beta)\dd\\
&=\frac12\int_\R k\left(\psi_x^2-
\frac{(h+\eta_x\psi_x)^2}{1+\eta_x^2}\right)\dd
=\int_\R k\Ncal\dd.
\end{align*}
Consequently, differentiating the fluid energy gives
\[
\frac{\mathrm d}{\mathrm dt}\frac12\Gform_\eta(\psi,\psi)
=\int_\R h\psi_t\dd+\int_\R\Ncal\eta_t\dd.
\]
Adding the derivatives of the remaining energy terms and using $\eta_t=h$ and $u_t=\psi_t+h_t$, we obtain
\[
\frac{\mathrm{d}}{\mathrm{d}t}E
=\int_{\mathbb{R}} h\bigl(u_t+B\eta+\Ncal\bigr)\,\mathrm{d}x
=\gamma\int_{\mathbb{R}}h h_{xx}\,\mathrm{d}x
=-\gamma\|h_x\|_{L^2}^2,
\]
where the final equality follows by integration by parts.
\end{proof}

The fluid boundary work cancels with the corresponding change in beam energy.
The extension of the energy identity to the solution class~\eqref{1.12} is justified in Section~\ref{sec:energy} by using the differentiability of the energy functional and a chain rule. Once this identity is established, the nonnegative
energy terms imply
\[
\|\eta(t)\|_{H^2}\le 2\sqrt{E(0)},\qquad
\|h(t)\|_{L^2}\le \sqrt{2E(0)}.
\]
The homogeneous elliptic estimates in Section~\ref{sec:elliptic} also control $\|\psi(t)\|_{\Pcal}$; the full coercivity estimate is given in
Lemma~\ref{lem:coercivity}.

\subsection{The flat linearization and its frequency scales}
\noindent

At the flat interface, $G(0)=\Lambda=|D|$. Set
\[
a(D)=\Lambda(\Id+\Lambda)^{-1},\qquad
a(k)=\frac{k}{1+k}\quad(k=|\xi|).
\]
The linearization of \eqref{1.8} at $(\eta,\psi)=(0,0)$ is
\begin{equation}\label{2.3}
\begin{cases}
\eta_t=a(D)u,\\
u_t-\gamma\partial_x^2a(D)u+B\eta=0.
\end{cases}
\end{equation}
Eliminating $u$ at each nonzero Fourier mode gives
\begin{equation}\label{2.4}
\partial_t^2\widehat\eta+d_\gamma(k)\partial_t\widehat\eta
+\omega(k)^2\widehat\eta=0,
\qquad
d_\gamma(k)=\frac{\gamma k^3}{1+k},\quad
\omega(k)^2=\frac{k(1+k^4)}{1+k}.
\end{equation}
The corresponding characteristic roots are
\[
\lambda_\pm(k)=-\frac{d_\gamma(k)}2
\pm\frac12\sqrt{d_\gamma(k)^2-4\omega(k)^2}.
\]
At high frequencies, $\omega(k)\sim k^2$, which is the dispersion underlying the undamped Schr\"odinger-type formulation in~\cite{ASY}. For fixed $\gamma>0$, we also have $d_\gamma(k)\sim\gamma k^2$ and $\omega(k)^2\sim k^4$ as $k\to\infty$. Thus, as an unordered pair, the normalized characteristic roots $\lambda_\pm(k)/k^2$ converge to the roots of
\[
\mu^2+\gamma\mu+1=0.
\]
The quadratic energy of the flat linearized system is
\begin{align}\label{2.5}
E_{\mathrm{flat}}(\eta,\psi)
&=\frac12\int_{\R}\left[
(1+|\xi|^4)|\widehat\eta(\xi)|^2
+(|\xi|+|\xi|^2)|\widehat\psi(\xi)|^2
\right]\,\mathrm{d}\xi\notag\\
&=\frac12\int_{\R}\left[
(1+|\xi|^4)|\widehat\eta(\xi)|^2
+\frac{|\xi|}{1+|\xi|}
 |\widehat u(\xi)|^2
\right]\,\mathrm{d}\xi,
\end{align}
where $u=(\Id+\Lambda)\psi$. Since the weight $|\xi|/(1+|\xi|)$ vanishes at zero frequency, finite energy permits $u\notin L^2$. The nonlinear construction therefore fixes a low-frequency background. For the potential, we first define \eqref{2.5} on smooth Fourier data supported away from zero and then extend it to the weighted completion.

\subsection{Fourier cutoffs and elementary estimates}
\noindent

Throughout the paper, we use the unitary normalization of the Fourier transform:
\[
\widehat g(\xi)=(2\pi)^{-1/2}\int_\R e^{-ix\xi}g(x)\dd,
\qquad
\|g\|_{H^s}^2=\int_\R\langle\xi\rangle^{2s}
|\widehat g(\xi)|^2\,\mathrm d\xi,
\quad \langle\xi\rangle=(1+\xi^2)^{1/2}.
\]
The homogeneous potential space is defined by completion as in \eqref{1.10}. Let $b(D)=B^{1/2}$, where $b(\xi)=(1+\xi^4)^{1/2}$. The following estimates place the initial correction $\psi_0-\mathbb{P}\psi_0$ in $H^1$ and the quadratic boundary terms in $H^{-1}$.

\begin{lemma}[Fourier cutoffs and the homogeneous phase space]
\label{lem:cutoff-tools}
Let $\psi\in\Pcal$ and $f=\mathbb{P}\psi$. Then
\[
\|f\|_{\Pcal}\le\|\psi\|_{\Pcal},
\qquad
\|(\Id-\mathbb{P})\psi\|_{H^1}
\le\sqrt{2}\,\|\psi_x\|_{L^2}.
\]
Moreover, $H^1\hookrightarrow\Pcal$. Every $\psi\in\Pcal$ can be approximated in both defining homogeneous norms by smooth functions whose Fourier transforms are compactly supported away from zero. Suppose that $\supp\widehat f\subset[-1,1]$. For every $H>0$
and all nonnegative integers $j,k$ with $j+k\ge1$, we have
\[
\|\partial_x^j\partial_y^k e^{y\Lambda}f\|_{L^2}
+\|\partial_x^j\partial_y^k e^{y\Lambda}f\|_{L^\infty}
\le C_{j,k,H}\|f_x\|_{L^2},
\qquad |y|\le H.
\]
\end{lemma}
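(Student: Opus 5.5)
The whole lemma is a Fourier-multiplier statement, so I will work entirely on the Fourier side, using the unitary normalization and the completion definition of $\Pcal$. For the first inequality I note that $\mathbb{P}$ is the multiplier $\mathbf 1_{|\xi|\le1}$. It multiplies $\widehat\psi$ by a function bounded by one, so both weighted integrals $\int|\xi||\widehat\psi|^2$ and $\int\xi^2|\widehat\psi|^2$ can only decrease. Hence $\|f\|_{\Pcal}\le\|\psi\|_{\Pcal}$. To make this rigorous on the completion, I first check it for test functions with $\widehat\varphi$ supported away from zero, where $\mathbb P\varphi$ is not smooth but still has Fourier support away from zero and finite norms. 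I then extend by density, viewing $\Pcal$ isometrically as the weighted $L^2$ space $L^2((|\xi|+\xi^2)\,\mathrm d\xi)$ of Fourier transforms.

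For the high-frequency bound I use that $|\xi|>1$ on the support of $(1-\mathbf 1_{|\xi|\le1})\widehat\psi$, so $\langle\xi\rangle^2=1+\xi^2\le2\xi^2$ there. Therefore
\[
\|(\Id-\mathbb P)\psi\|_{H^1}^2\le2\int_{|\xi|>1}\xi^2|\widehat\psi|^2\,\mathrm d\xi\le2\|\psi_x\|_{L^2}^2 .
\]
Here the right-hand side is a genuine $L^2$ integral, so $(\Id-\mathbb P)\psi$ has a canonical $H^1$ representative. For the embedding $H^1\hookrightarrow\Pcal$ I use $|\xi|\le\frac12(1+\xi^2)$, which gives $\|\psi\|_{\Pcal}^2\le\frac32\|\psi\|_{H^1}^2$. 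Since Schwartz functions with Fourier support away from zero are dense in $H^1$ (in one dimension the point $\{0\}$ carries no $H^1$ capacity issue for $L^2$-type weights), the embedding is compatible with the completion.

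For the approximation claim, given $\widehat\psi$ in the weighted space I truncate to $\epsilon\le|\xi|\le R$ and mollify in $\xi$ with a kernel of width below $\epsilon/2$. This gives $C_c^\infty$ functions supported away from zero, whose inverse transforms are Schwartz. Convergence in both norms follows from dominated convergence together with continuity of mollification in weighted $L^2$; the weights are bounded above and below on compact sets avoiding zero.

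For the harmonic-extension bound I write
\[
\partial_x^j\partial_y^k e^{y\Lambda}f=\mathcal F^{-1}\bigl[(i\xi)^j|\xi|^ke^{y|\xi|}\widehat f\bigr].
\]
On $|\xi|\le1$ with $|y|\le H$, the multiplier is bounded by $|\xi|^{j+k}e^{H}\le|\xi|e^H$, since $j+k\ge1$. Plancherel then gives the $L^2$ bound by $e^H\|f_x\|_{L^2}$. For $L^\infty$, I bound the sup norm by $(2\pi)^{-1/2}\int_{|\xi|\le1}|\xi|e^H|\widehat f|\,\mathrm d\xi$ and apply Cauchy--Schwarz on the bounded set $[-1,1]$, obtaining $e^H\sqrt{2}(2\pi)^{-1/2}\|f_x\|_{L^2}$. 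The only subtle step is interpreting $e^{y\Lambda}f$ for $f$ defined modulo constants. Every derivative with $j+k\ge1$ kills the zero mode, so these derivatives are well-defined tempered distributions with the integrable Fourier transforms above; this is where the hypothesis $j+k\ge1$ is essential. I expect this well-definedness on the completion to be the main point to write carefully, while the estimates themselves are routine.
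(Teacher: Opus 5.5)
Your proposal is correct and follows essentially the same Fourier-side argument as the paper: you restrict the weighted integral to $|\xi|\le1$, use $1+\xi^2\le2\xi^2$ on $|\xi|>1$, truncate to annuli for the density claim, and bound the multiplier by $e^H|\xi|$ on $[-1,1]$ before applying Plancherel and Cauchy--Schwarz. Two side remarks are slightly off but harmless: $\mathbb{P}\varphi$ is smooth, merely not Schwartz, because $\mathbf 1_{\{|\xi|\le1\}}\widehat\varphi$ may jump at $|\xi|=1$; and Schwartz functions with Fourier support away from zero are dense in $H^1$ simply because $\{0\}$ is a null set for $\langle\xi\rangle^2\,\mathrm d\xi$, so no capacity argument is involved (in physical space, points in fact have positive $H^1$ capacity in one dimension).
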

\begin{proof}
In Fourier variables, the squared $\Pcal$-norm is
\[
\|\psi\|_{\Pcal}^2
=\int_{\R} (|\xi|+\xi^2)
 |\widehat\psi(\xi)|^2\,\mathrm{d}\xi.
\]
Restricting the integral to $|\xi|\le1$ yields $\|\mathbb{P}\psi\|_{\Pcal}\le\|\psi\|_{\Pcal}$. On $|\xi|>1$, it holds that
\[
\|(\Id-\mathbb{P})\psi\|_{H^1}
\le\sqrt{2}\,\|\psi_x\|_{L^2},
\]
where we have used $1+\xi^2\le2\xi^2$. The inequality $|\xi|+\xi^2\le2(1+\xi^2)$ implies the continuous embedding $H^1\hookrightarrow\Pcal$.

Truncation of $\widehat\psi$ to $1/n\le|\xi|\le n$ converges in both components of the $\Pcal$-norm by the dominated convergence theorem. On each fixed annulus, the weight $|\xi|+\xi^2$ is bounded above and below by positive constants, so the truncated data can be approximated by smooth functions compactly supported away from zero. When $\psi$ is real, these approximations may be chosen conjugate symmetric, making their inverse Fourier transforms real. The resulting sequence converges to $\psi$ in the homogeneous completion $\Pcal$, whose elements are identified modulo constants as specified in Section~\ref{sec:introduction}.

Suppose that $\supp\widehat f\subset[-1,1]$, $j+k\ge1$, and $|y|\le H$. For $|\xi|\le1$,
\[
\bigl|(i\xi)^j|\xi|^k e^{y|\xi|}\bigr|
\le e^H|\xi|.
\]
Plancherel's theorem yields the $L^2$ estimate, while Fourier inversion and the Cauchy-Schwarz inequality on $[-1,1]$ give the $L^\infty$ estimate. Both of them depend only on $\|f_x\|_2$ and do not require $f\in L^2$.
\end{proof}

\begin{lemma}
\label{lem:product-tools}
In one spatial dimension, the following estimates hold:
\[
\|g\|_\infty\le C\|g\|_{H^1},\qquad
\|ab\|_{L^1}\le\|a\|_{L^2}\|b\|_{L^2},\qquad
\|a\|_{H^{-1}}\le C\|a\|_{L^1}.
\]
In particular, $H^2\hookrightarrow W^{1,\infty}$, and the product estimate gives
\[
\|g^2-\widetilde g^2\|_{L^1}
\le
\bigl(\|g\|_{L^2}+\|\widetilde g\|_{L^2}\bigr)
\|g-\widetilde g\|_{L^2}.
\]
Moreover, the Fourier multiplier $b(D)$ and its inverse satisfy
\[
b(D):H^1\to H^{-1},\qquad
b(D)^{-1}:H^s\to H^{s+2}
\quad(s\in\R).
\]
\end{lemma}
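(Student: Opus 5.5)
The statement collects elementary one-dimensional facts, and the plan is to prove each by a short Fourier or Cauchy--Schwarz argument, in the order listed.

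\emph{The sup bound.} Write $g$ through Fourier inversion and apply Cauchy--Schwarz:
\[
\|g\|_\infty\le(2\pi)^{-1/2}\int_\R|\widehat g(\xi)|\,\mathrm d\xi
\le(2\pi)^{-1/2}\Bigl(\int_\R\langle\xi\rangle^{-2}\,\mathrm d\xi\Bigr)^{1/2}\|g\|_{H^1}.
\]
The integral equals $\pi$, so the constant is finite. This first holds for Schwartz $g$. Density then gives it on $H^1$, with $g$ identified with its continuous representative.

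\emph{The $L^1$ bounds.} The bound $\|ab\|_{L^1}\le\|a\|_{L^2}\|b\|_{L^2}$ is Cauchy--Schwarz. For $\|a\|_{H^{-1}}\le C\|a\|_{L^1}$, use duality. For $a\in L^1$ and $\varphi\in H^1$,
\[
|\langle a,\varphi\rangle|\le\|a\|_{L^1}\|\varphi\|_\infty\le C\|a\|_{L^1}\|\varphi\|_{H^1}
\]
by the first inequality. Taking the supremum over $\|\varphi\|_{H^1}\le1$ identifies $a$ with an element of $H^{-1}=(H^1)'$. Equivalently, $|\widehat a|\le(2\pi)^{-1/2}\|a\|_{L^1}$ and $\langle\xi\rangle^{-2}$ is integrable.

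\emph{The consequences.} The embedding $H^2\hookrightarrow W^{1,\infty}$ follows by applying the sup bound to $g$ and to $g_x\in H^1$. The difference estimate uses the factorization $g^2-\widetilde g^2=(g+\widetilde g)(g-\widetilde g)$, then the product bound and the triangle inequality.

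\emph{The multiplier $b$.} The multiplier statements follow from the two-sided bound $b(\xi)=(1+\xi^4)^{1/2}$ with $\tfrac12\langle\xi\rangle^2\le b(\xi)\le\langle\xi\rangle^2$. The left inequality holds because $(1+\xi^2)^2\le2(1+\xi^4)$. With Plancherel this gives
\[
\|b(D)g\|_{H^{s-2}}\le\|g\|_{H^s},\qquad
\|b(D)^{-1}g\|_{H^{s+2}}\le2\|g\|_{H^s}.
\]
The case $s=1$ of the first bound, combined with $H^{-1}\hookrightarrow H^{-1}$, gives $b(D):H^1\to H^{-1}$.

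There is no real obstacle. The only point needing care is that the $H^{-1}$ bound for $L^1$ functions is stated for the distribution defined by $a$, which the duality argument handles directly.
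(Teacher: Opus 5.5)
Your proposal is correct and follows the paper's proof essentially step by step. Both use Fourier inversion with Cauchy--Schwarz and density for the sup bound, and duality against $H^1\hookrightarrow L^\infty$ for $\|a\|_{H^{-1}}\le C\|a\|_{L^1}$. Both use the factorization $g^2-\widetilde g^2=(g+\widetilde g)(g-\widetilde g)$ for the difference estimate, and the comparison of $b(\xi)$ with $\langle\xi\rangle^2$ plus Plancherel for the multipliers.

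Your general bound $b(D):H^s\to H^{s-2}$ and the alternative Fourier proof of the $L^1\to H^{-1}$ estimate are harmless refinements. The appeal to $H^{-1}\hookrightarrow H^{-1}$ is redundant, since the case $s=1$ already lands in $H^{-1}$.
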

\begin{proof}
We first establish the one-dimensional Sobolev embedding. For $g\in\mathcal S(\R)$, using the Fourier inversion and the Cauchy-Schwarz inequality, we obtain
\[
\|g\|_{L^\infty}
\le (2\pi)^{-1/2}\|\widehat g\|_{L^1}
\le (2\pi)^{-1/2}
\left(\int_\R\langle\xi\rangle^{-2}\,\mathrm d\xi\right)^{1/2}
\|g\|_{H^1}
\le C\|g\|_{H^1}.
\]
If $g_n\to g$ in $H^1$, the same estimate makes $(g_n)$ uniformly Cauchy; its uniform limit agrees almost everywhere with $g$. Thus the estimate holds for every $g\in H^1$, with a bounded continuous representative. Applying it to both $g$ and $g_x$ when $g\in H^2$ yields
\[
\|g\|_{W^{1,\infty}}
\le C\bigl(\|g\|_{H^1}+\|g_x\|_{H^1}\bigr)
\le C\|g\|_{H^2}.
\]

If $a\in L^1$ and $\varphi\in H^1$, the embedding just proved implies
\[
\left|\int_\R a\overline{\varphi}\,\mathrm dx\right|
\le \|a\|_{L^1}\|\varphi\|_{L^\infty}
\le C\|a\|_{L^1}\|\varphi\|_{H^1}.
\]
Taking the supremum over $\|\varphi\|_{H^1}\le1$ shows that $a$ defines an element of $H^{-1}$ and that $\|a\|_{H^{-1}}\le C\|a\|_{L^1}$. Moreover, for
$g,\widetilde g\in L^2$, it holds that
\[
\begin{aligned}
\|g^2-\widetilde g^2\|_{L^1}
&=\|(g-\widetilde g)(g+\widetilde g)\|_{L^1}\\
&\le \|g-\widetilde g\|_{L^2}
     \|g+\widetilde g\|_{L^2}\\
&\le
\bigl(\|g\|_{L^2}+\|\widetilde g\|_{L^2}\bigr)
\|g-\widetilde g\|_{L^2}.
\end{aligned}
\]
Finally, $b(\xi)^2=1+\xi^4$ and
$\langle\xi\rangle^4=(1+\xi^2)^2$. The inequalities
\[
1+\xi^4\le(1+\xi^2)^2,
\qquad
2(1+\xi^4)-(1+\xi^2)^2=(1-\xi^2)^2\ge0
\]
therefore imply
\[
\tfrac12\langle\xi\rangle^4
\le b(\xi)^2
\le \langle\xi\rangle^4.
\]
Consequently, for $g\in H^1$,
\[
\|b(D)g\|_{H^{-1}}^2
=\int_\R \langle\xi\rangle^{-2}b(\xi)^2
       |\widehat g(\xi)|^2\,\mathrm d\xi
\le \|g\|_{H^1}^2.
\]
Since $b(\xi)>0$, its inverse multiplier is well defined; for every $s\in\R$ and $f\in H^s$,
\[
\|b(D)^{-1}f\|_{H^{s+2}}^2
=\int_\R
\frac{\langle\xi\rangle^{2s+4}}{b(\xi)^2}
|\widehat f(\xi)|^2\,\mathrm d\xi
\le 2\|f\|_{H^s}^2.
\]
These bounds, initially obtained for Schwartz functions, extend the multipliers continuously to the stated Sobolev spaces.
\end{proof}

\section{Elliptic estimates in the homogeneous potential space}\label{sec:elliptic}
\noindent

In Appendix~\ref{app:elliptic}, we establish the fixed-graph boundary trace and resolvent estimates used below, together with the cancellation that places the shape derivative in $L^2$. These results hold at the stated regularity without an external elliptic regularity theorem and are independent of the evolution and damping coefficient.

\begin{lemma}[Fixed-graph estimates]\label{lem:elliptic-input}
Let $\eta\in H^2(\R)$ be real valued. The variationally defined Dirichlet--Neumann operator $G(\eta)$ is a nonnegative self-adjoint operator on $L^2(\R)$, with operator domain $H^1(\R)$ and form domain $H^{1/2}(\R)$. If $\|\eta\|_{H^2}\le M$, the
following estimates hold uniformly, with constants depending only on $M$:
\begin{equation}\label{3.1}
C_M^{-1}\norm{\varphi_x}{L^2}
\le\norm{G(\eta)\varphi}{L^2}\le C_M\norm{\varphi_x}{L^2},
\quad\varphi\in H^1,
\end{equation}
and
\begin{equation}\label{3.2}
\norm{R_\eta g}{H^1}\le C_M\norm g{L^2},
\quad R_\eta=(\Id+G(\eta))^{-1}.
\end{equation}
For each real-valued $k\in H^2$ and each $\varphi\in H^1$, the shape derivative $DG(\eta)[k]\varphi$ exists in $L^2$ and satisfies
\begin{equation}\label{3.3}
\|DG(\eta)[k]\varphi\|_{L^2}
\le C_M\|k\|_{H^2}\|\varphi\|_{H^1}.
\end{equation}
It is given, in the sense of distributions, by
\begin{equation}\label{3.4}
DG(\eta)[k]\varphi
=-G(\eta)(k\beta)-\partial_x(k\nu),\qquad
\beta
=\frac{G(\eta)\varphi+\eta_x\varphi_x}{1+\eta_x^2},
\qquad
\nu=\varphi_x-\eta_x\beta,
\end{equation}
where $G(\eta)$ in the first term denotes its bounded extension from $L^2$ to $H^{-1}$ by duality. Although the two terms on the right are individually defined in $H^{-1}$, their sum belongs to $L^2$. For real-valued graphs $\eta_1,\eta_2$ satisfying
$\|\eta_j\|_{H^2}\le M$, $j=1,2$, we have
\begin{align}
\|(G(\eta_1)-G(\eta_2))\varphi\|_{L^2}
&\le C_M\|\eta_1-\eta_2\|_{H^2}\|\varphi\|_{H^1},
\label{3.5}\\
\|(R_{\eta_1}-R_{\eta_2})g\|_{H^1}
&\le C_M\|\eta_1-\eta_2\|_{H^2}\|g\|_{L^2},
\label{3.6}
\end{align}
for all $\varphi\in H^1$ and $g\in L^2$, where
$R_\eta=(\Id+G(\eta))^{-1}$.
\end{lemma}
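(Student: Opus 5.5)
We prove Lemma~\ref{lem:elliptic-input} on the flattened strip. Set $\rho(x,z)=z+\chi(z)\,e^{z\Lambda}\eta$, or more simply $\rho=z+\eta(x)$ for $z<0$. For $\eta\in H^2$ the Jacobian is bounded above and below. The pulled-back Dirichlet form
\[
\int_{z<0}\bigl(\Phi_x-\tfrac{\eta_x}{\rho_z}\Phi_z,\ \Phi_z/\rho_z\bigr)\cdot(\dots)\rho_z\,\mathrm dx\,\mathrm dz
\]
is uniformly elliptic with $W^{1,\infty}$ coefficients controlled by $M$, using $H^2\hookrightarrow W^{1,\infty}$ from Lemma~\ref{lem:product-tools}. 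Lax--Milgram in the homogeneous space $\dot H^1$ of the half plane gives the variational extension for $\dot H^{1/2}$ data. The Dirichlet form $\Gform_\eta$ is a closed, nonnegative, symmetric form on $H^{1/2}$, equivalent to $\|\cdot\|_{\dot H^{1/2}}^2$ with constants depending on $M$. The Friedrichs representation theorem then gives a nonnegative self-adjoint $G(\eta)$ whose form domain is $H^{1/2}$. It remains to identify the operator domain as $H^1$ together with \eqref{3.1}.

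For the domain and \eqref{3.1}, take $\varphi\in H^1$ and use tangential difference quotients $\delta_h$ in $x$ on the flattened equation. Differentiating the equation in $x$ produces commutator terms involving $\eta_{xx}\in L^2$. These are paired against $\nabla\Phi$ and need $\nabla\Phi$ in $L^\infty_x L^2_z$ near the boundary, which I would get by a trace/interpolation estimate. The result is $\partial_x\nabla\Phi\in L^2$, with bound $C_M\|\varphi_x\|_2$. The equation then gives $\partial_z^2\Phi\in L^2$, so the conormal trace $G(\eta)\varphi$ lies in $L^2$; this proves the upper bound in \eqref{3.1} and $H^1\subset\mathrm{Dom}$. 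For the lower bound, apply the Rellich identity: multiply $\Delta\Psi=0$ by the vertical derivative $\Psi_y$ and integrate over $\Omega_\eta$, justified by the regularity just obtained. This yields
\[
\int(1+\eta_x^2)\bigl(\beta^2-V^2\bigr)\dd=0,
\]
with the boundary velocities written as in Proposition~\ref{prop:boundary}. It follows that $\|V\|_2\sim\|\beta\|_2$. Since $\varphi_x=V+\eta_x\beta$ and $G\varphi=\beta-\eta_xV$, the two norms $\|\varphi_x\|_2$ and $\|G\varphi\|_2$ are comparable with constants depending on $\|\eta_x\|_\infty$. Conversely, if $\varphi\in\mathrm{Dom}$, an approximation argument together with the lower bound gives $\varphi\in H^1$.

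For the resolvent bound \eqref{3.2}, let $\varphi=R_\eta g$. Pairing with $\varphi$ gives $\|\varphi\|_2\le\|g\|_2$. Pairing with $G\varphi$ gives $\|G\varphi\|_2\le\|g\|_2$. Combining these with \eqref{3.1} yields $\|\varphi\|_{H^1}\le C_M\|g\|_2$.

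For the shape derivative, differentiate the flattened variational problem along $\eta+\varepsilon k$. The difference quotient solves an elliptic problem whose source is linear in $k$ and $\nabla\Phi$. Integrating by parts yields formula \eqref{3.4} in $H^{-1}$, through the duality extension of $G$. The main obstacle is showing that the sum lies in $L^2$, since each term separately is only in $H^{-1}$. I would argue via the Alinhac good unknown: set $\Theta=\dot\Phi-k\beta$ in the flattened variables, transported appropriately. Then $\Theta$ satisfies an elliptic problem with $L^2$-type divergence forcing involving $k_x\nabla\Phi$ and $k\,\partial_x\nabla\Phi$, both of which are controlled by the tangential regularity step. Rerunning the difference-quotient argument for $\Theta$ places its conormal trace in $L^2$, and this trace equals the sum in \eqref{3.4}; the required product bounds follow from $k\in H^2\subset W^{1,\infty}$. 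Finally, \eqref{3.5} follows by integrating $DG(\eta_2+s(\eta_1-\eta_2))$ over $s\in[0,1]$ and applying \eqref{3.3}. The bound \eqref{3.6} follows from the identity
\[
R_{\eta_1}-R_{\eta_2}=R_{\eta_1}\bigl(G(\eta_2)-G(\eta_1)\bigr)R_{\eta_2}
\]
together with \eqref{3.2} and \eqref{3.5}.
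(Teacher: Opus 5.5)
\textbf{Gap in the central regularity step.} You claim that tangential difference quotients give $\partial_x\nabla\Phi\in L^2$ with bound $C_M\|\varphi_x\|_2$ for $\varphi\in H^1$. This is false already at the flat graph. For $\Phi=e^{z\Lambda}\varphi$ on $\Pi=\R\times(-\infty,0)$,
\[
\|\partial_x\nabla\Phi\|_{L^2(\Pi)}^2=\int_\R|\xi|^3|\widehat\varphi(\xi)|^2\,\mathrm d\xi=\|\varphi\|_{\dot H^{3/2}}^2 .
\]
A full tangential derivative in the bulk, and hence $\nabla\Phi\in H^1(\Pi)$ via the equation, corresponds to $H^{3/2}$ traces and would give an $H^{1/2}$ conormal trace. $H^1$ traces give only half a tangential derivative, $\nabla\Phi\in L^2_zH^{1/2}_x$, which is the paper's $\mathcal Y^{1/2}$. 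Three of your steps rest on the false estimate: the upper bound in \eqref{3.1}, the justification of the Rellich integration, and the $L^2$ forcing $k\,\partial_x\nabla\Phi$ in the shape-derivative step. The flattened coefficients are also not $W^{1,\infty}$; only $A_\eta-I\in H^1\cap L^\infty$.

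The missing ingredients work at the half-derivative level. First, commutator bounds for $[J^s,a]$ with $a\in H^1$, at $s=3/8$ and then $s=1/2$, give $\|\nabla v\|_{\mathcal Y^{1/2}}\le C_M\|\varphi\|_{H^1}$. Second, a trace lemma for a divergence-free flux $Q\in\mathcal Y^{1/2}$: since $\partial_zQ_z=-\partial_xQ_x\in\mathcal Y^{-1/2}$, the normal component $Q_z(0)$ lies in $L^2$. Alternatively, your difference-quotient argument does work for $H^{3/2}$ traces with $\eta\in H^2$ and could be interpolated against the variational bound $H^{1/2}\to H^{-1/2}$, but that is not the argument you wrote.

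The Rellich identity should be proved for smooth data and extended by density using the graph-continuity bound. Also, the flux of $2\Psi_y\nabla\Psi-|\nabla\Psi|^2e_y$ gives $\int_\R(\beta^2-V^2-2\eta_x\beta V)\,\mathrm dx=0$, not the weighted identity you state. A comparison $\|V\|_2\sim\|\beta\|_2$ alone does not give $\|\varphi_x\|_2\sim\|G(\eta)\varphi\|_2$; rewrite the identity directly in $\varphi_x$ and $G(\eta)\varphi$, as in \eqref{B16}.

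\textbf{Shape derivative.} Your $\Theta$, the Eulerian variation with trace $-k\beta$, is harmonic in the physical domain. Its conormal trace is therefore exactly $-G(\eta)(k\beta)$, the first term of \eqref{3.4} alone. For $k\beta\in L^2$ this is in general only in $H^{-1}$; the $L^2$ property of the sum comes from cancellation with $-\partial_x(k\nu)$, as the flat symbol $\xi\zeta-|\xi||\zeta|$ shows.

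The paper instead differentiates at fixed flattened coordinates. The zero-trace material derivative $\dot v$ solves $\operatorname{div}(A\nabla\dot v)=-\operatorname{div}(\dot A\nabla v)$, and the flux $\dot Q=A\nabla\dot v+\dot A\nabla v$ is divergence-free and lies in $\mathcal Y^{1/2}$. The same trace lemma then gives $DG(\eta)[k]\varphi=\dot Q_z(0)\in L^2$, and a quadratic remainder estimate gives Fr\'echet differentiability. The good unknown is used only to identify \eqref{3.4} for smooth data before passing to the limit.

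\textbf{Domain identification.} Your claim that every element of the Friedrichs domain lies in $H^1$ needs an actual argument, because the a priori bound is available only on $H^1$. The paper proves surjectivity of $\Id+G(\eta):H^1\to L^2$ by a continuity method along $\theta\eta$, $\theta\in[0,1]$.

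Your resolvent bound and your derivations of \eqref{3.5} and \eqref{3.6} are fine once these pieces are in place.
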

\begin{proof}
The assertions follow from the fixed-graph theory developed in Appendix~\ref{app:elliptic}. The variational harmonic extension and its tangential regularity, established in Lemma~\ref{lem:B-shift}, provide the basis for identifying
$G(\eta)\varphi$ with the $L^2$ conormal trace for $\varphi\in H^1$ in Proposition~\ref{prop:B-DN}. Together with the Rellich estimate in Proposition~\ref{prop:B-Rellich}, this yields the two-sided comparison between $\|G(\eta)\varphi\|_2$ and $\|\varphi_x\|_2$. By Proposition~\ref{prop:B-resolvent},
$G(\eta)$ is nonnegative and self-adjoint on $L^2$, with operator domain $H^1$ and form domain $H^{1/2}$, and $\Id+G(\eta):H^1\to L^2$ is an isomorphism whose inverse
$R_\eta$ is bounded from $L^2$ to $H^1$. The dependence on the graph is quantified by
\eqref{B14} and \eqref{B18}, which give the stated Lipschitz estimates, and by Theorem~\ref{thm:B-shape}, which establishes continuous Fr\'echet differentiability
of $\eta\mapsto G(\eta)$ in $\mathcal L(H^1,L^2)$. The latter theorem also gives a distributional formula for the derivative and an $L^2$ remainder estimate that is quadratic in the $H^2$ norm of the graph increment.
\end{proof}

The preceding $L^2$ estimates for the Dirichlet-Neumann operator were established for boundary traces in $H^1$. We now extend $G(\eta)$ to the homogeneous potential space $\Pcal$, whose elements need not belong to $L^2$, and establish the corresponding graph-continuity and Dirichlet-form estimates.
\begin{lemma}[Homogeneous estimates for the Dirichlet-Neumann operator]\label{lem:homogeneous}
For each real-valued $\eta\in H^2$, the Dirichlet-Neumann operator extends uniquely to a bounded linear map $G(\eta):\Pcal\to L^2$. If $\|\eta\|_{H^2}\le M$, then
\begin{equation}\label{3.7}
C_M^{-1}\|\psi_x\|_{L^2}
\le \|G(\eta)\psi\|_{L^2}
\le C_M\|\psi_x\|_{L^2},
\end{equation}
for every $\psi\in\Pcal$. Moreover, for real valued graphs $\eta_1,\eta_2$ satisfying
$\|\eta_j\|_{H^2}\le M$, $j=1,2$, we have
\begin{equation}\label{3.8}
\|(G(\eta_1)-G(\eta_2))\psi\|_{L^2}
\le C_M\|\eta_1-\eta_2\|_{H^2}\|\psi_x\|_{L^2},
\qquad \psi\in\Pcal.
\end{equation}
Thus these bounds only depend on $\|\psi_x\|_2$; the potential itself need not lie in $L^2$.
\end{lemma}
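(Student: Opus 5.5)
The plan is a density argument. By Lemma~\ref{lem:cutoff-tools}, the class $\mathcal D_0=\{\varphi\in\mathcal S:\supp\widehat\varphi\subset\R\setminus\{0\}\}$ is dense in $\Pcal$, and it lies in $H^1$. On $\mathcal D_0$, the operator $G(\eta)$ is already defined by Lemma~\ref{lem:elliptic-input}, and \eqref{3.1} gives
\[
C_M^{-1}\|\varphi_x\|_{L^2}\le\|G(\eta)\varphi\|_{L^2}\le C_M\|\varphi_x\|_{L^2},
\qquad \varphi\in\mathcal D_0.
\]
Since $\|\varphi_x\|_{L^2}\le\|\varphi\|_{\Pcal}$, the map $G(\eta):\mathcal D_0\to L^2$ is bounded for the $\Pcal$ norm. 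It therefore extends uniquely by continuity to a bounded map $\Pcal\to L^2$.

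For consistency with the $H^1$ definition, note two facts. First, $H^1\hookrightarrow\Pcal$. Second, $H^1$ functions are approximated in $H^1$ (hence in $\Pcal$) by $\mathcal D_0$ functions: truncate $\widehat\varphi$ away from $0$ and $\infty$ and mollify, exactly as in the proof of Lemma~\ref{lem:cutoff-tools}. Hence the extension agrees with the original operator on $H^1$. Constants, which are the gauge ambiguity in $\Pcal$, are annihilated. This is consistent because $\Pcal$ is defined modulo constants.

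To prove \eqref{3.7}, take $\psi\in\Pcal$ and $\varphi_n\in\mathcal D_0$ with $\varphi_n\to\psi$ in $\Pcal$. Then $\partial_x\varphi_n\to\psi_x$ in $L^2$, and $G(\eta)\varphi_n\to G(\eta)\psi$ in $L^2$ by the construction. Passing to the limit in \eqref{3.1} gives both inequalities in \eqref{3.7}.

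The estimate \eqref{3.8} follows the same way. Applying \eqref{3.5} to $\varphi_n\in H^1$ gives
\[
\|(G(\eta_1)-G(\eta_2))\varphi_n\|_{L^2}\le C_M\|\eta_1-\eta_2\|_{H^2}\|\varphi_n\|_{H^1}.
\]
The right-hand side involves the full $H^1$ norm, which includes $\|\varphi_n\|_{L^2}$. That quantity does not stay bounded as $\varphi_n\to\psi\notin L^2$.

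This is the main obstacle. I plan to remove it by a low/high frequency splitting. Write $\varphi=\mathbb P\varphi+(\Id-\mathbb P)\varphi$. For the high part, Lemma~\ref{lem:cutoff-tools} gives $\|(\Id-\mathbb P)\varphi\|_{H^1}\le\sqrt2\|\varphi_x\|_{L^2}$, so \eqref{3.5} suffices.

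For the low part $f=\mathbb P\varphi$, which need not be in $L^2$, I will use the explicit function $e^{y\Lambda}f$. By Lemma~\ref{lem:cutoff-tools}, all its derivatives of order at least one are bounded in $L^2\cap L^\infty$ by $\|f_x\|_{L^2}$ on strips $|y|\le H$. Let $\chi(y)$ be a cutoff equal to $1$ near $0$, and set $F(x,y)=\chi(y-\eta(x))\,e^{(y-\eta(x))\Lambda}f$ or a similar lift. Then write the harmonic extension as $F+W$, where $W$ is the variational solution with zero trace and source $-\Delta F$. This gives
\[
G(\eta)f=(F_y-\eta_xF_x)|_{y=\eta}+\text{(normal trace of }W),
\]
and every term depends only on derivatives of $f$. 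The difference $G(\eta_1)f-G(\eta_2)f$ can then be estimated by differentiating along the segment $\eta_s=\eta_2+s(\eta_1-\eta_2)$.

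A cleaner alternative is to use \eqref{3.4}. It shows that $DG(\eta)[k]\varphi$ depends on $\varphi$ only through $G(\eta)\varphi$ and $\varphi_x$, because $\beta$ and $\nu$ involve only these. Its $L^2$ bound from the cancellation in Theorem~\ref{thm:B-shape} should then hold with $\|\varphi\|_{H^1}$ replaced by $\|\varphi_x\|_{L^2}$. I would attempt this route first. The idea is to note that adding a constant to $\varphi$ changes nothing in \eqref{3.4}. Then apply \eqref{3.3} to $\varphi-c$, and use a Poincar\'e-type replacement on low frequencies: for $\varphi\in\mathcal D_0$, the trivial identity $G(\eta)\varphi=G(\eta)(\varphi-c)$ combined with the structure of \eqref{3.4} reduces the bound to the quantities $G(\eta)\varphi$ and $\varphi_x$ only.

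The resulting estimate is
\[
\|DG(\eta_s)[k]\varphi\|_{L^2}\le C_M\|k\|_{H^2}\|\varphi_x\|_{L^2}.
\]
Integrating it in $s$ with $k=\eta_1-\eta_2$ yields \eqref{3.8} on $\mathcal D_0$, and density extends it to all of $\Pcal$.

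I expect the hardest step to be verifying that the cancellation bound in Theorem~\ref{thm:B-shape} is homogeneous in $\varphi$, that is, controlled by $\|\varphi_x\|_{L^2}$ alone. If it is not, I would fall back on the explicit low-frequency lift described above.
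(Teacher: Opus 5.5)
Your density extension, the proof of \eqref{3.7}, and the high-frequency half of \eqref{3.8} are correct and match the paper. The gap is the low-frequency half of \eqref{3.8}, which is the real content of the lemma.

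Your primary route needs $\|DG(\eta)[k]\varphi\|_{L^2}\le C_M\|k\|_{H^2}\|\varphi_x\|_{L^2}$, and the argument you offer for it does not work. For $c\ne0$ the function $\varphi-c$ is not in $H^1(\R)$, so \eqref{3.3} cannot be applied to it. Nor does any Poincar\'e inequality on $\R$ control $\|\varphi\|_{L^2}$, or even $\|\varphi\|_{\dot H^{1/2}}$, by $\|\varphi_x\|_{L^2}$ at low frequency. That the right side of \eqref{3.4} involves only $\varphi_x$ and $G(\eta)\varphi$ gives $H^{-1}$ bounds for each of its two terms, nothing more. The $L^2$ bound is a cancellation. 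In Theorem~\ref{thm:B-shape} it comes from the flux $\dot Q$ through \eqref{B9}--\eqref{B10}, and there $\|\nabla e^{z\Lambda}\varphi\|_{\mathcal Y^{1/2}}^2=\int|\xi|\langle\xi\rangle|\widehat\varphi|^2\,\mathrm d\xi$, which at low frequency is the $\dot H^{1/2}$ norm, not $\|\varphi_x\|_{L^2}^2$. Moreover, the homogeneous derivative bound is equivalent to \eqref{3.8} itself: take $\eta_1=\eta+\varepsilon k$, $\eta_2=\eta$ and let $\varepsilon\to0$. So this route restates the problem rather than solving it.

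Your fallback lift does not close the gap as written. With the cutoff, $\nabla F$ contains $\chi'(y-\eta)\,(e^{z\Lambda}f)|_{z=y-\eta}$, in which $f$ appears undifferentiated. This term is not square integrable when $f\notin L^2$, so the claim that every term involves only derivatives of $f$ is false. Without the cutoff, the flattened Poisson lift is not harmonic in $\Omega_\eta$. The natural estimate for the zero-trace corrector via \eqref{B8} and \eqref{B3} then passes through $\|\nabla e^{z\Lambda}f\|_{\mathcal Y^{1/2}}\simeq\|f\|_{\dot H^{1/2}}$ again. Repairing this needs extra work you do not supply: for instance, an $L^2_zW^{1,\infty}_x$ bound on $\nabla e^{z\Lambda}f$ in terms of $\|f_x\|_{L^2}$, and separating the flat part of the flux before taking the normal trace. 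Your plan to differentiate along the segment would also reintroduce a shape derivative.

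The missing idea is the paper's. Since $\supp\widehat f\subset[-1,1]$, the function $F=e^{y\Lambda}f$ is defined and harmonic on all of $\R^2$ in physical coordinates, with finite energy in $\{y<H\}$, so no corrector is needed. Its trace on the graph is $f+d_\eta$ with $d_\eta=\int_0^{\eta}F_y\,\mathrm dy$, and its conormal derivative is $W_\eta=(F_y-\eta_xF_x)|_{y=\eta}$. Only derivatives of $F$ on a bounded strip enter, so the strip bounds of Lemma~\ref{lem:cutoff-tools} (cf.\ \eqref{3.9}) give \eqref{3.10}, namely $\|d_\eta\|_{H^1}+\|W_\eta\|_{L^2}\le C_M\|f_x\|_{L^2}$, and the Lipschitz bounds \eqref{3.11}. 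Uniqueness of the variational extension gives $G(\eta)f=W_\eta-G(\eta)d_\eta$ with $d_\eta\in H^1$. Applying \eqref{3.1} and \eqref{3.5} to $d_{\eta_j}$ then yields \eqref{3.8} for $f$, with no shape derivative and no dependence on $\|f\|_{L^2}$.
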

\begin{proof}
Choose $\psi_n$ with $\widehat{\psi_n}\in C_c^\infty(\R\setminus\{0\})$ and $\psi_n\to\psi$ in both $\dot H^{1/2}$ and $\dot H^1$. The upper bound in \eqref{3.1}, applied to $\psi_n-\psi_m$, shows that $G(\eta)\psi_n$ is Cauchy in $L^2$; denote its limit by $q$. Passing both bounds in \eqref{3.1} to the limit gives \eqref{3.7} with $q$ in place of $G(\eta)\psi$. For $\varphi\in C_c^\infty(\R)$, it follows from the variational definition that
\[
\int_{\R}G(\eta)\psi_n\,\varphi\,\mathrm{d}x
=\Gform_\eta(\psi_n,\varphi).
\]
The independent variational argument in Lemma~\ref{lem:form} below shows that convergence in $\dot H^{1/2}$ implies convergence of the corresponding harmonic extensions in Dirichlet energy. Passing to the limit therefore yields
\[
\int_{\R}q\,\varphi\,\mathrm{d}x
=\Gform_\eta(\psi,\varphi).
\]
Thus $q$ is the variational Dirichlet-Neumann operator applied to $\psi$ and is independent of the approximating sequence.

To prove \eqref{3.8}, split $\psi=f+g$ with $f=\mathbb{P}\psi$ and $g=(\Id-\mathbb{P})\psi$. Since $\|g\|_{H^1}\le C\|\psi_x\|_{L^2}$, the high frequency part $g$ can be bounded by estimate \eqref{3.5}. For the low frequency part, assume temporarily that $\widehat f$ is also supported away from zero, and define
\[
\begin{aligned}
F(x,y)&=e^{y\Lambda}f(x),\qquad
 d_\eta(x)=F(x,\eta(x))-f(x),\\
W_\eta(x)&=F_y(x,\eta(x))-\eta_x(x)F_x(x,\eta(x)).
\end{aligned}
\]
For all nonnegative integers $j,k$ with $j+k\ge1$, we have
\begin{equation}\label{3.9}
\|\partial_x^j\partial_y^kF(\cdot,y)\|_{L^2}
+\|\partial_x^j\partial_y^kF(\cdot,y)\|_{L^\infty}
\le C_{j,k,M}\|f_x\|_{L^2},
\qquad |y|\le CM.
\end{equation}
Plancherel's theorem and the multiplier bound give the $L^2$ estimate in \eqref{3.9}, while Bernstein's inequality gives the $L^\infty$ estimate because the $x$-Fourier support remains in $[-1,1]$.

The identity
\[
d_\eta(x)=\int_0^{\eta(x)}F_y(x,y)\,\mathrm{d}y
\]
implies
\[
\partial_x d_\eta(x)
=\int_0^{\eta(x)}F_{xy}(x,y)\,\mathrm{d}y
+\eta_x(x)F_y(x,\eta(x)).
\]
Equation~\eqref{3.9} applied to this identity gives $\|d_\eta\|_{H^1}\le C_M\|f_x\|_2$. To estimate derivatives of $F$ on the graph, write
\[
F_y(x,\eta(x))
=F_y(x,0)+\int_0^{\eta(x)}F_{yy}(x,y)\,\mathrm{d}y.
\]
The integral term is bounded in $L^2$ by
\[
\|\eta\|_{L^2}
\sup_{|y|\le CM}\|F_{yy}(\cdot,y)\|_{L^\infty}.
\]
Together with \eqref{3.9}, this yields $\|F_y(\cdot,\eta(\cdot))\|_{L^2}\le C_M\|f_x\|_{L^2}$; the same argument applies to $F_x$. Since $\eta_x\in L^\infty$, it follows from the definition of $W_\eta$ that
\begin{equation}\label{3.10}
\|d_\eta\|_{H^1}+\|W_\eta\|_{L^2}
\le C_M\|f_x\|_{L^2}.
\end{equation}
For two graphs $\eta_1,\eta_2$, by the fundamental theorem of calculus, we have
\[
d_{\eta_1}(x)-d_{\eta_2}(x)
=(\eta_1(x)-\eta_2(x))
\int_0^1
F_y\bigl(x,\eta_2(x)+\theta(\eta_1(x)-\eta_2(x))\bigr)
\,\mathrm{d}\theta.
\]
Set $\delta\eta=\eta_1-\eta_2$ and $\eta_\theta=\eta_2+\theta\delta\eta$. Differentiation in $x$ gives
\[
\partial_x(d_{\eta_1}-d_{\eta_2})
=\delta\eta_x\int_0^1 F_y(x,\eta_\theta)\,\mathrm{d}\theta
+\delta\eta\int_0^1
\bigl(F_{xy}(x,\eta_\theta)
+\eta_{\theta x}F_{yy}(x,\eta_\theta)\bigr)
\,\mathrm{d}\theta.
\]
Equation~\eqref{3.9} bounds each displayed derivative of $F$ in $L^\infty$ by $C_M\|f_x\|_{L^2}$, uniformly for $\theta\in[0,1]$, while $\|\eta_{\theta x}\|_\infty\le C_M$. Together with the formula for $d_{\eta_1}-d_{\eta_2}$, these bounds yield the required $H^1$ estimate. For $W_\eta$, we have
\begin{align*}
W_{\eta_1}-W_{\eta_2}
={}&\delta\eta\int_0^1
F_{yy}(x,\eta_\theta)\,\mathrm{d}\theta
-\delta\eta_x F_x(x,\eta_1)\\
&-\eta_{2x}\delta\eta\int_0^1
F_{xy}(x,\eta_\theta)\,\mathrm{d}\theta.
\end{align*}
Each product is bounded in $L^2$, which implies
\begin{equation}\label{3.11}
\|d_{\eta_1}-d_{\eta_2}\|_{H^1}
+\|W_{\eta_1}-W_{\eta_2}\|_{L^2}
\le C_M\|\eta_1-\eta_2\|_{H^2}\|f_x\|_{L^2}.
\end{equation}
Choose $H\ge\max\{0,\sup\eta\}$. Since $\supp\widehat f\subset[-1,1]$, it follows from Plancherel's identity that
\[
\int_{\R\times(-\infty,H)}|\nabla F|^2
\,\mathrm{d}x\,\mathrm{d}y
=\int_{\R}|\xi|e^{2H|\xi|}
|\widehat f(\xi)|^2\,\mathrm{d}\xi
\le e^{2H}\|f\|_{\dot H^{1/2}}^2.
\]
Thus $F$ is a finite energy harmonic function in $\Omega_\eta$ with boundary trace $f+d_\eta$. By uniqueness of the variational harmonic extension, its unnormalized normal derivative satisfies
\[
G(\eta)(f+d_\eta)=W_\eta.
\]
Consequently,
\begin{equation}\label{3.12}
G(\eta)f=W_\eta-G(\eta)d_\eta.
\end{equation}
Subtracting \eqref{3.12} for the two graphs, we have
\[
(G(\eta_1)-G(\eta_2))f
=W_{\eta_1}-W_{\eta_2}
-\bigl(G(\eta_1)d_{\eta_1}-G(\eta_2)d_{\eta_2}\bigr).
\]
Decompose the parenthesized difference as
\[
G(\eta_1)(d_{\eta_1}-d_{\eta_2})
+(G(\eta_1)-G(\eta_2))d_{\eta_2}.
\]
Applying \eqref{3.1}, \eqref{3.5}, \eqref{3.10}, and \eqref{3.11} to these terms yields
\[
\|G(\eta_1)d_{\eta_1}-G(\eta_2)d_{\eta_2}\|_{L^2}
\le C_M\|\eta_1-\eta_2\|_{H^2}\|f_x\|_{L^2}.
\]
This bound and \eqref{3.11} control the graph difference for low frequency data truncated away from zero. Convergence in $\dot H^1$, together with \eqref{3.7}, removes the extra cutoff. Combining this result with the high frequency estimate proves \eqref{3.8}, without requiring an $L^2$ bound on $f$.
\end{proof}

\begin{lemma}[Equivalence and stability of the Dirichlet form]\label{lem:form}
For every real valued $\eta\in H^2$ satisfying $\|\eta\|_{H^2}\le M$, the Dirichlet form satisfies
\begin{equation}\label{3.13}
C_M^{-1}\norm\psi{\dot H^{1/2}}^2
\le\Gform_\eta(\psi,\psi)
\le C_M\norm\psi{\dot H^{1/2}}^2.
\end{equation}
The map $(\eta,\psi)\mapsto\Gform_\eta(\psi,\psi)$ is continuous on $H^2\times\dot H^{1/2}$ for real valued $\eta$. More precisely, if $\|\eta_j\|_{H^2}\le M$ and $\psi_j\in\dot H^{1/2}$ for $j=1,2$, then
\begin{align}\label{3.14}
|\Gform_{\eta_1}(\psi_1,\psi_1)
       -\Gform_{\eta_2}(\psi_2,\psi_2)|\notag
&\le C_M(\norm{\psi_1}{\dot H^{1/2}}
                 +\norm{\psi_2}{\dot H^{1/2}})
                    \norm{\psi_1-\psi_2}{\dot H^{1/2}}\notag\\
&+C_M\norm{\eta_1-\eta_2}{H^2}
                         \norm{\psi_2}{\dot H^{1/2}}^2.
\end{align}
\end{lemma}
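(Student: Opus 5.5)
The plan is to compare every Dirichlet energy with that of the flat half-plane, using a bi-Lipschitz flattening of $\Omega_\eta$. Since $\eta\in H^2\hookrightarrow W^{1,\infty}$ with $\|\eta\|_{W^{1,\infty}}\le C_M$, I would use the map
\[
\Phi_\eta(x,z)=\bigl(x,z+\eta(x)\bigr),\qquad z<0,
\]
which sends $\R\times(-\infty,0)$ onto $\Omega_\eta$. Its Jacobian is $1$, and $D\Phi_\eta$ and its inverse are bounded by $C_M$. A function $\Psi$ on $\Omega_\eta$ then has finite Dirichlet energy exactly when $\widetilde\Psi=\Psi\circ\Phi_\eta$ does on the lower half-plane, with the energies comparable up to $C_M$. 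The trace of $\widetilde\Psi$ on $z=0$ is the same $\psi$. The variational extension minimizes energy among finite-energy functions with trace $\psi$. On the half-plane the minimal energy equals $\|\psi\|_{\dot H^{1/2}}^2$, attained by the Poisson extension $e^{z\Lambda}\psi$. Transporting minimizers and competitors both ways gives \eqref{3.13}: the lower bound follows by composing the $\Omega_\eta$-minimizer with $\Phi_\eta$, the upper bound by pushing $e^{z\Lambda}\psi$ forward. I would first do this on the dense test class of Lemma~\ref{lem:cutoff-tools} and then extend by completion. This also shows that the extension map $\psi\mapsto\Psi$ is bounded from $\dot H^{1/2}$ into the energy space, which is the convergence used in Lemma~\ref{lem:homogeneous}.

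For \eqref{3.14}, I split the difference as
\[
\Gform_{\eta_1}(\psi_1,\psi_1)-\Gform_{\eta_1}(\psi_2,\psi_2)
\quad\text{and}\quad
\Gform_{\eta_1}(\psi_2,\psi_2)-\Gform_{\eta_2}(\psi_2,\psi_2).
\]
The first piece equals $\Gform_{\eta_1}(\psi_1-\psi_2,\psi_1+\psi_2)$ by bilinearity and symmetry. Cauchy--Schwarz for the nonnegative form together with \eqref{3.13} bounds it by the first term on the right of \eqref{3.14}.

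The second piece is the graph-dependence, which I expect to be the main step. In flattened coordinates,
\[
\Gform_\eta(\psi,\psi)=\min\int_{z<0}A_\eta\nabla\widetilde\Psi\cdot\nabla\widetilde\Psi\,\mathrm dx\,\mathrm dz,
\]
where the minimum runs over $\widetilde\Psi$ with trace $\psi$ and the coefficient matrix is
\[
A_\eta=(D\Phi_\eta)^{-1}(D\Phi_\eta)^{-T}=\begin{pmatrix}1&-\eta_x\\-\eta_x&1+\eta_x^2\end{pmatrix}.
\]
This matrix depends only on $\eta_x(x)$, and $\|A_{\eta_1}-A_{\eta_2}\|_{L^\infty}\le C_M\|\eta_1-\eta_2\|_{W^{1,\infty}}\le C_M\|\eta_1-\eta_2\|_{H^2}$. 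I use the minimizer $\widetilde\Psi_2$ for $\eta_2$ as a competitor for $\eta_1$. This gives
\[
\Gform_{\eta_1}(\psi_2,\psi_2)-\Gform_{\eta_2}(\psi_2,\psi_2)\le\int(A_{\eta_1}-A_{\eta_2})\nabla\widetilde\Psi_2\cdot\nabla\widetilde\Psi_2\le C_M\|\delta\eta\|_{H^2}\|\psi_2\|_{\dot H^{1/2}}^2,
\]
where the last step uses the energy bound from the first paragraph. Swapping the roles of the graphs, with the $\eta_1$-minimizer as competitor, gives the reverse inequality, so only the $\dot H^{1/2}$ norm of $\psi_2$ appears.

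The point needing care is that this minimization characterization, and the fact that the flattened minimizer is $\Psi\circ\Phi_\eta$, hold for homogeneous data modulo constants. I would verify both on the dense class and then pass to the limit using the bounds already obtained. Continuity of $(\eta,\psi)\mapsto\Gform_\eta(\psi,\psi)$ on $H^2\times\dot H^{1/2}$ then follows immediately from \eqref{3.14}.
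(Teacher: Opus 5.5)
Your proposal is correct and takes essentially the same route as the paper. It flattens to $\Pi$ with the coefficient matrix $A_\eta$ and gets \eqref{3.13} from ellipticity plus the flat variational principle, using the Poisson extension as competitor for the upper bound. It gets \eqref{3.14} by splitting into a same-graph bilinear piece and a same-trace competitor argument in both directions, using $\|A_{\eta_1}-A_{\eta_2}\|_{L^\infty}\le C_M\|\eta_1-\eta_2\|_{H^2}$. The only difference is cosmetic: the paper constructs the minimizer for homogeneous data directly by Lax--Milgram on the zero-trace space $\mathcal V_0$, rather than by your density-and-limit verification.
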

\begin{proof}
Let $\Pi=\R\times(-\infty,0)$ and flatten the fluid domain by $(x,z)\mapsto(x,z+\eta(x))$. The transformed Dirichlet energy has coefficient matrix
\[
A_\eta(x)
=\begin{pmatrix}
1&-\eta_x\\
-\eta_x&1+\eta_x^2
\end{pmatrix},
\qquad
\det A_\eta=1.
\]
The matrix is symmetric and positive definite, with trace $2+\eta_x^2$ and determinant $1$. Consequently,
\[
(2+\|\eta_x\|_\infty^2)^{-1}|\xi|^2
\le \xi^{\mathsf T}A_\eta(x)\xi
\le (2+\|\eta_x\|_\infty^2)|\xi|^2,
\qquad \xi\in\R^2.
\]
For $\psi\in\dot H^{1/2}$, the flat Poisson extension $P\psi=e^{z\Lambda}\psi$ satisfies
\[
\|\nabla P\psi\|_{L^2(\Pi)}^2
=\|\psi\|_{\dot H^{1/2}}^2.
\]
Let $\mathcal V_0$ denote the closed subspace of zero-trace elements in the homogeneous energy space on $\Pi$, understood modulo constants. The Lax-Milgram theorem then gives a unique $a\in\mathcal V_0$ satisfying
\[
\int_\Pi A_\eta\nabla a\cdot\nabla\theta
\,\mathrm{d}x\,\mathrm{d}z
=-\int_\Pi A_\eta\nabla P\psi\cdot\nabla\theta
\,\mathrm{d}x\,\mathrm{d}z,
\qquad \theta\in\mathcal V_0.
\]
The function $w_\eta=P\psi+a$ minimizes the transformed Dirichlet energy among extensions of $\psi$. In physical coordinates,
\[
\Psi(x,y)=w_\eta(x,y-\eta(x))
\]
is the variational harmonic extension of $\psi$ in $\Omega_\eta$, understood modulo constants. This construction defines the bilinear Dirichlet form for boundary traces in $\dot H^{1/2}$ without requiring them to belong to $L^2$. It follows from the flat variational principle that
\[
\|\psi\|_{\dot H^{1/2}}^2
\le \|\nabla w\|_{L^2(\Pi)}^2,
\]
for every finite-energy extension $w$ of $\psi$. Applying this inequality to $w_\eta$ and using ellipticity gives the lower bound in \eqref{3.13}; taking $P\psi$ as a competitor gives the upper bound. To compare graphs with the same trace $\psi$, write $w_j=w_{\eta_j}[\psi]$ and set
\[
Q_j=\int_\Pi A_{\eta_j}\nabla w_j\cdot\nabla w_j
\,\mathrm{d}x\,\mathrm{d}z,
\qquad j=1,2.
\]
Using $w_2$ as a competitor for the first minimization problem, we obtain
\[
Q_1-Q_2
\le\int_\Pi
(A_{\eta_1}-A_{\eta_2})\nabla w_2\cdot\nabla w_2
\,\mathrm{d}x\,\mathrm{d}z.
\]
Interchanging the indices yields the reverse bound. The coefficient estimate
\[
\|A_{\eta_1}-A_{\eta_2}\|_\infty
\le C_M\|\eta_1-\eta_2\|_{H^2}
\]
combines with ellipticity and \eqref{3.13} to obtain
\[
|Q_1-Q_2|
\le C_M\|\eta_1-\eta_2\|_{H^2}
\|\psi\|_{\dot H^{1/2}}^2.
\]
For two traces on the same graph, it follows from symmetry and bilinearity that
\[
\Gform_\eta(\psi_1,\psi_1)-\Gform_\eta(\psi_2,\psi_2)
=\Gform_\eta(\psi_1-\psi_2,\psi_1+\psi_2).
\]
Cauchy-Schwarz for the Dirichlet form and the upper bound in \eqref{3.13} imply
\[
\bigl|\Gform_\eta(\psi_1,\psi_1)
       -\Gform_\eta(\psi_2,\psi_2)\bigr|\le
C_M\bigl(\|\psi_1\|_{\dot H^{1/2}}
       +\|\psi_2\|_{\dot H^{1/2}}\bigr)
       \|\psi_1-\psi_2\|_{\dot H^{1/2}}.
\]
Comparing the traces at $\eta_1$ and then the graphs with trace $\psi_2$ yields \eqref{3.14}. By linearity, $w_\eta[\psi_1]-w_\eta[\psi_2]$ is the extension of $\psi_1-\psi_2$. Uniform ellipticity of $A_\eta$ and \eqref{3.13} therefore give
\[
\|\nabla(w_\eta[\psi_1]-w_\eta[\psi_2])\|_{L^2(\Pi)}
\le C_M\|\psi_1-\psi_2\|_{\dot H^{1/2}}.
\]
This establishes the convergence of harmonic extensions used in Lemma~\ref{lem:homogeneous} directly from the variational construction, independently of the Dirichlet-Neumann trace estimates.
\end{proof}
\section{Affine reduction with a fixed low-frequency background}\label{sec:reduction}
Fix a real valued $f\in\Pcal$ with $\supp\widehat f\subset[-1,1]$. For a real-valued
$\eta\in H^2$ and $v\in L^2$, set
\begin{equation}\label{4.1}
u=f+v,\qquad
\psi=f+\zeta,\qquad
\zeta=R_\eta\bigl(v-G(\eta)f\bigr).
\end{equation}
By \eqref{3.7}, $G(\eta)f\in L^2$, and the resolvent estimate places $\zeta$ in $H^1$ even if $f\notin L^2$. Consequently, $(\Id+G(\eta))\zeta=v-G(\eta)f$ can be written as
\begin{equation}\label{4.2}
G(\eta)(f+\zeta)=v-\zeta=:h.
\end{equation}

\begin{lemma}\label{lem:chart}
For $j=1,2$, define $\zeta_j$, $\psi_j$, and $h_j$ from $(\eta_j,v_j,f_j)$ by \eqref{4.1}--\eqref{4.2}. Suppose
\[
\|\eta_j\|_{H^2}+\|v_j\|_{L^2}+\|\partial_x f_j\|_{L^2}\le R,
\qquad j=1,2.
\]
Then
\begin{equation}\label{4.3}
\|\zeta_j\|_{H^1}
\le C_R\bigl(\|v_j\|_{L^2}+\|\partial_x f_j\|_{L^2}\bigr),
\qquad j=1,2,
\end{equation}
and
\begin{equation}\label{4.4}
\|\zeta_1-\zeta_2\|_{H^1}
\le C_R\bigl(
\|\eta_1-\eta_2\|_{H^2}
+\|v_1-v_2\|_{L^2}
+\|\partial_x(f_1-f_2)\|_{L^2}
\bigr).
\end{equation}
The nonlinearity in \eqref{1.9} also satisfies
\begin{equation}\label{4.5}
\|\Ncal(\eta_j,\psi_j)\|_{L^1}\le C_R,
\qquad j=1,2,
\end{equation}
and
\begin{equation}\label{4.6}
\|\Ncal(\eta_1,\psi_1)-\Ncal(\eta_2,\psi_2)\|_{L^1}
\le C_R\bigl(
\|\eta_1-\eta_2\|_{H^2}
+\|v_1-v_2\|_{L^2}
+\|\partial_x(f_1-f_2)\|_{L^2}
\bigr).
\end{equation}
\end{lemma}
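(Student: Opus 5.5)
The proof reduces everything to the resolvent bounds \eqref{3.2}, \eqref{3.6}, the homogeneous Dirichlet--Neumann bounds \eqref{3.7}, \eqref{3.8}, and the $L^1$ product estimate of Lemma~\ref{lem:product-tools}. The first step is \eqref{4.3}. Since $\zeta_j=R_{\eta_j}(v_j-G(\eta_j)f_j)$, estimate \eqref{3.2} gives $\|\zeta_j\|_{H^1}\le C_R\|v_j-G(\eta_j)f_j\|_{L^2}$. The upper bound in \eqref{3.7} controls $\|G(\eta_j)f_j\|_{L^2}$ by $C_R\|\partial_xf_j\|_{L^2}$, which yields \eqref{4.3}.

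For \eqref{4.4}, I would write
\[
\zeta_1-\zeta_2=(R_{\eta_1}-R_{\eta_2})\bigl(v_1-G(\eta_1)f_1\bigr)+R_{\eta_2}\bigl(v_1-v_2-(G(\eta_1)-G(\eta_2))f_1-G(\eta_2)(f_1-f_2)\bigr).
\]
\begin{itemize}
\item The first term is bounded by \eqref{3.6} together with the bound just proved for $\|v_1-G(\eta_1)f_1\|_{L^2}$.
\item In the second term, \eqref{3.2} removes $R_{\eta_2}$.
\item Estimate \eqref{3.8} handles the graph difference acting on $f_1$, with constant $C_R\|\partial_xf_1\|_{L^2}\le C_R$.
\item Estimate \eqref{3.7} handles $G(\eta_2)(f_1-f_2)$. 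This needs $f_1-f_2\in\Pcal$, which holds since both $f_j$ lie in $\Pcal$.
\end{itemize}
As a byproduct, $h_j=v_j-\zeta_j$ satisfies $\|h_j\|_{L^2}\le C_R$ and $\|h_1-h_2\|_{L^2}\le C_R\,\Delta$, where $\Delta$ denotes the right-hand side of \eqref{4.4}.

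For the nonlinearity, write $\Ncal=\tfrac12\psi_x^2-\tfrac12 Q^2/(1+\eta_x^2)$, where $Q=h+\eta_x\psi_x$ uses $G(\eta)\psi=h$ from \eqref{4.2}. The needed bounds are as follows.
\begin{itemize}
\item $\psi_{j,x}=\partial_xf_j+\partial_x\zeta_j$ is bounded in $L^2$ by $C_R$, using \eqref{4.3}.
\item $\eta_{j,x}$ is bounded in $L^\infty$ by $C\|\eta_j\|_{H^2}$, since $H^2\hookrightarrow W^{1,\infty}$.
\item The weight $(1+\eta_x^2)^{-1}\le1$.
\end{itemize}
Hence $Q_j$ is bounded in $L^2$, and Cauchy--Schwarz gives \eqref{4.5}.

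For \eqref{4.6}, I would split each quadratic term with $a^2-b^2=(a-b)(a+b)$, as in Lemma~\ref{lem:product-tools}. The differences are controlled as follows.
\begin{itemize}
\item $\|\psi_{1,x}-\psi_{2,x}\|_{L^2}\le\|\partial_x(f_1-f_2)\|_{L^2}+\|\zeta_1-\zeta_2\|_{H^1}$.
\item $\|Q_1-Q_2\|_{L^2}\le\|h_1-h_2\|_{L^2}+\|\eta_{1,x}-\eta_{2,x}\|_{L^\infty}\|\psi_{1,x}\|_{L^2}+\|\eta_{2,x}\|_{L^\infty}\|\psi_{1,x}-\psi_{2,x}\|_{L^2}$.
\item The weight difference is bounded in $L^\infty$ by $C_R\|\eta_1-\eta_2\|_{H^2}$.
\item Multiplying by $Q_1^2\in L^1$ closes the weight term.
\end{itemize}

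The only delicate point is keeping every bound dependent on $\partial_xf_j$ alone, never on $\|f_j\|_{L^2}$. This is exactly what \eqref{3.7} and \eqref{3.8} provide, together with the observation that $\Ncal$ involves $\psi$ only through $\psi_x$ and $G(\eta)\psi=h$. With these in place, the remainder is bookkeeping of constants.
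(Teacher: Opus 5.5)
Your proposal is correct and follows essentially the same route as the paper. It uses the same resolvent decomposition of $\zeta_1-\zeta_2$, with the roles of the indices swapped, which is immaterial, and the same $a^2-b^2$ splitting of $2\Ncal=\psi_x^2-(1+\eta_x^2)^{-1}(h+\eta_x\psi_x)^2$, relying only on $L^2$ bounds for $\psi_x$ and $h$, $L^\infty$ bounds for $\eta_x$, and \eqref{3.7}--\eqref{3.8} to keep every constant dependent on $f_j$ only through $\|\partial_x f_j\|_{L^2}$.
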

\begin{proof}
Write $G_j=G(\eta_j)$ and $R_j=R_{\eta_j}$. It follows from \eqref{3.2} and \eqref{3.7} that
\[
\|\zeta_j\|_{H^1}
\le C_R\bigl(\|v_j\|_{L^2}+\|G_jf_j\|_{L^2}\bigr)
\le C_R\bigl(\|v_j\|_{L^2}+\|\partial_x f_j\|_{L^2}\bigr),
\]
Thus \eqref{4.3} follows. Subtracting the reconstruction formulas, it holds:
\begin{align*}
\zeta_1-\zeta_2
=(R_1-R_2)(v_2-G_2f_2)
+R_1\bigl(
v_1-v_2-G_1(f_1-f_2)-(G_1-G_2)f_2
\bigr).
\end{align*}
Using $\|v_2-G_2f_2\|_{L^2}\le C_R$, the resolvent estimates \eqref{3.6} and \eqref{3.2}, together with Lemma~\ref{lem:homogeneous}, applied to the decomposition
above yield \eqref{4.4}. Let
\[
\delta
=\|\eta_1-\eta_2\|_{H^2}
+\|v_1-v_2\|_{L^2}
+\|\partial_x(f_1-f_2)\|_{L^2},
\]
and introduce
\[
a_j=\partial_x\psi_j,\qquad
s_j=\partial_x\eta_j,\qquad
c_j=(1+s_j^2)^{-1},\qquad
t_j=h_j+s_ja_j.
\]
The preceding estimates, together with $H^2\hookrightarrow W^{1,\infty}$, imply
\[
\|a_j\|_{L^2}+\|h_j\|_{L^2}+\|t_j\|_2+\|s_j\|_{L^\infty}
\le C_R
\]
and
\[
\|a_1-a_2\|_{L^2}+\|h_1-h_2\|_{L^2}
+\|s_1-s_2\|_{L^\infty}
\le C_R\delta.
\]
Using
\[
t_1-t_2
=(h_1-h_2)+s_1(a_1-a_2)+(s_1-s_2)a_2,
\]
we obtain $\|t_1-t_2\|_{L^2}\le C_R\delta$. Since $s\mapsto(1+s^2)^{-1}$ has bounded derivative, we also have $\|c_1-c_2\|_{L^\infty}\le C_R\delta$.
Let $\Ncal_j=\Ncal(\eta_j,\psi_j)$. Since $2\Ncal_j=a_j^2-c_jt_j^2$ and $0<c_j\le1$,
\[
\|\Ncal_j\|_{L^1}
\le \tfrac12\bigl(\|a_j\|_{L^2}^2+\|t_j\|_{L^2}^2\bigr)
\le C_R,
\]
which proves \eqref{4.5}. To estimate the difference, write
\[
\begin{aligned}
2(\Ncal_1-\Ncal_2)
={}&(a_1-a_2)(a_1+a_2)\\
&-c_1(t_1-t_2)(t_1+t_2)
-(c_1-c_2)t_2^2.
\end{aligned}
\]
The $L^2\cdot L^2\to L^1$ product estimate then yields
\[
\begin{aligned}
2\|\Ncal_1-\Ncal_2\|_{L^1}
&\le{}
\|a_1-a_2\|_{L^2}\bigl(\|a_1\|_{L^2}+\|a_2\|_{L^2}\bigr)\\
&+\|t_1-t_2\|_{L^2}\bigl(\|t_1\|_{L^2}+\|t_2\|_{L^2}\bigr)\\
&+\|c_1-c_2\|_{L^\infty}\|t_2\|_{L^2}^2\\
&\le C_R\delta.
\end{aligned}
\]
This proves \eqref{4.6} without an $L^\infty$ bound on $a_j$ or $h_j$.
\end{proof}

Since $f$ is time independent, the surface system takes the form
\begin{equation}\label{4.7}
\begin{cases}
\eta_t=v-\zeta,\\
v_t-\gamma v_{xx}+B\eta
=-\gamma\zeta_{xx}-\Ncal(\eta,f+\zeta).
\end{cases}
\end{equation}
With $b(D)=B^{1/2}$ and $Z=(b(D)\eta,v)$, this becomes
\begin{equation}\label{4.8}
Z_t=L_\gamma(D)Z+\Fcal_f(Z),
\qquad
L_\gamma(\xi)=
\begin{pmatrix}
0&b(\xi)\\
-b(\xi)&-\gamma\xi^2
\end{pmatrix},
\end{equation}
where
\begin{equation}\label{4.9}
\Fcal_f(Z)=
\begin{pmatrix}
-b(D)\zeta\\
-\gamma\partial_x^2\zeta-\Ncal(\eta,f+\zeta)
\end{pmatrix}.
\end{equation}
Here $\eta=b(D)^{-1}Z_1$, $v=Z_2$, and \eqref{4.1} reconstructs $\zeta$. For evolution estimates, we take $L_\gamma(D)$ as the principal part and retain the resolvent terms, including their linear contributions, in $\Fcal_f$. Thus $L_\gamma(D)$ is not the full linearization of \eqref{1.8}.
\begin{corollary}\label{cor:forcing}
On the bounded sets in Lemma~\ref{lem:chart},
\begin{align}
\norm{\Fcal_f(Z)}{H^{-1}}&\le C_{\gamma,R},\label{4.10}\\
\norm{\Fcal_{f_1}(Z_1)-\Fcal_{f_2}(Z_2)}{H^{-1}}
&\le C_{\gamma,R}\bigl(\norm{Z_1-Z_2}{L^2}
+\norm{\partial_x(f_1-f_2)}{L^2}\bigr).\label{4.11}
\end{align}
\end{corollary}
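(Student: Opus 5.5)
The plan is to bound the two components of $\Fcal_f(Z)$ separately in $H^{-1}$, reducing everything to Lemma~\ref{lem:chart}. First I translate between $Z$ and $(\eta,v)$. Since $\eta=b(D)^{-1}Z_1$, Lemma~\ref{lem:product-tools} gives $\|\eta\|_{H^2}\le\sqrt2\|Z_1\|_{L^2}$, and trivially $v=Z_2$. I read ``the bounded sets in Lemma~\ref{lem:chart}'' as
\[
\|Z\|_{L^2}+\|\partial_xf\|_{L^2}\le R',
\]
which places $(\eta,v,f)$ in the set of Lemma~\ref{lem:chart} with $R=CR'$. Under this reading, $\|\eta_1-\eta_2\|_{H^2}+\|v_1-v_2\|_{L^2}\le C\|Z_1-Z_2\|_{L^2}$, and the quantity $\delta$ from the proof of Lemma~\ref{lem:chart} is bounded by $C(\|Z_1-Z_2\|_{L^2}+\|\partial_x(f_1-f_2)\|_{L^2})$.

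For the first component, Lemma~\ref{lem:product-tools} gives $b(D):H^1\to H^{-1}$ with norm at most one. Hence
\[
\|b(D)\zeta\|_{H^{-1}}\le\|\zeta\|_{H^1},
\]
and by linearity $\|b(D)(\zeta_1-\zeta_2)\|_{H^{-1}}\le\|\zeta_1-\zeta_2\|_{H^1}$. These are controlled by \eqref{4.3} and \eqref{4.4}, respectively.

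For the second component, I treat the two pieces separately.
\begin{itemize}
\item The linear piece satisfies $\|\partial_x^2\zeta\|_{H^{-1}}\le\|\partial_x\zeta\|_{L^2}\le\|\zeta\|_{H^1}$, since the multiplier $\xi^2\langle\xi\rangle^{-1}\le|\xi|\le\langle\xi\rangle$. The factor $\gamma$ is absorbed into $C_{\gamma,R}$, and differences again follow from \eqref{4.4}.
\item The nonlinear piece uses the embedding $L^1\hookrightarrow H^{-1}$ from Lemma~\ref{lem:product-tools}, applied to $\Ncal(\eta,f+\zeta)$. Then \eqref{4.5} gives the bound and \eqref{4.6} gives the difference estimate, with $\psi_j=f_j+\zeta_j$ exactly as in Lemma~\ref{lem:chart}.
\end{itemize}
Summing the component estimates yields \eqref{4.10} and \eqref{4.11}.

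The main point of care, rather than a real obstacle, is the nonlinear term. It is only in $L^1$ because $\psi_x$ and $h$ are merely $L^2$. This is exactly why the forcing is measured in $H^{-1}$, through the one-dimensional duality $H^1\hookrightarrow L^\infty$. Apart from that, the only other thing to check is that the bounded-set hypotheses transfer between $Z$ and $(\eta,v)$ with constants depending only on $R$.
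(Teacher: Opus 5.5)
Your proposal is correct and matches the paper's proof essentially step by step. Both use $b(D):H^1\to H^{-1}$, $\|\partial_x^2 g\|_{H^{-1}}\le\|g\|_{H^1}$, and $L^1\hookrightarrow H^{-1}$ to reduce the forcing and its differences to \eqref{4.3}--\eqref{4.6}, with $\|\eta_1-\eta_2\|_{H^2}\le C\|Z_1-Z_2\|_{L^2}$ converting from the $(\eta,v)$ variables to $Z$.
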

\begin{proof}
Lemma~\ref{lem:product-tools} gives the bounded maps $b(D):H^1\to H^{-1}$ and $L^1\hookrightarrow H^{-1}$. Moreover, integration by parts yields
\[
\|\partial_x^2g\|_{H^{-1}}
\le\|g_x\|_{L^2}
\le\|g\|_{H^1}.
\]
Therefore
\[
\|\Fcal_f(Z)\|_{H^{-1}}
\le C(1+\gamma)\|\zeta\|_{H^1}+C\|\Ncal\|_{L^1}.
\]
Combining this with \eqref{4.3} and \eqref{4.5} proves \eqref{4.10}. Applying these bounds to the difference of the nonlinear forcing terms yields
\[
\begin{aligned}
\|\Fcal_{f_1}(Z_1)-\Fcal_{f_2}(Z_2)\|_{H^{-1}}
&\le C(1+\gamma)\|\zeta_1-\zeta_2\|_{H^1}\\
&+C\|\Ncal(\eta_1,\psi_1)
-\Ncal(\eta_2,\psi_2)\|_{L^1}.
\end{aligned}
\]
Finally, combine \eqref{4.4} and \eqref{4.6} with
\[
\|\eta_1-\eta_2\|_{H^2}
\le C\|Z_1-Z_2\|_{L^2},
\]
to obtain \eqref{4.11}.
\end{proof}

\section{A parabolic estimate and local well-posedness}\label{sec:local}
For $T>0$, define
\[
\XX_T
=C([0,T];L^2(\R;\R^2))
\cap L^2(0,T;H^1(\R;\R^2)),
\]
equipped with the norm
\[
\|Z\|_{\XX_T}
=\sup_{0\le t\le T}\|Z(t)\|_{L^2}
+\left(\int_0^T\|Z(t)\|_{H^1}^2\,\mathrm{d}t\right)^{1/2}.
\]
\begin{lemma}[Linear evolution estimate]\label{lem:linear}
The semigroup $S_\gamma(t)=\exp(tL_\gamma(D))$ satisfies
\begin{equation}\label{5.1}
\left\|
S_\gamma(t)Z_0
+\int_0^t S_\gamma(t-s)F(s)\,\mathrm{d}s
\right\|_{\XX_T}
\le C_\gamma\left(
\|Z_0\|_2+\|F\|_{L^2(0,T;H^{-1})}
\right)
\end{equation}
for $0<T\le1$, with $C_\gamma$ depending only on $\gamma$.
\end{lemma}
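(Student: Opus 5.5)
The plan is to work entirely on the Fourier side and build, for each frequency $\xi$, a Hermitian positive definite matrix $Q(\xi)$, uniformly comparable to the identity, such that the damped symbol is dissipative in the $Q$-inner product:
\[
\operatorname{Re}\bigl(Q(\xi)L_\gamma(\xi)w\cdot\overline w\bigr)\le -c_\gamma\,\xi^2|w|^2+C_\gamma|w|^2,
\qquad w\in\mathbb C^2.
\]
This is the "high frequency Fourier symmetrizer" mentioned in the strategy.

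\textbf{Constructing $Q$.} The symmetric part of $L_\gamma(\xi)$ is $\operatorname{diag}(0,-\gamma\xi^2)$, so the identity gives damping only in the second component. Following the standard hypocoercive correction, I take
\[
Q(\xi)=\begin{pmatrix}1&\epsilon\,\sigma(\xi)\\ \epsilon\,\sigma(\xi)&1\end{pmatrix},\qquad \sigma(\xi)=\frac{\xi^2}{b(\xi)}\chi_{|\xi|\ge1},
\]
with $\epsilon=\epsilon(\gamma)>0$ small. Then $|\sigma|\le1$, so $Q\sim\Id$ uniformly.

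\textbf{Computing the dissipation.} Write $w=(w_1,w_2)$. The identity part contributes $-\gamma\xi^2|w_2|^2$. The cross term contributes
\[
\epsilon\sigma\operatorname{Re}\bigl[(L w)_1\overline{w_2}+(Lw)_2\overline{w_1}\bigr]
=\epsilon\sigma\bigl(b|w_2|^2-b|w_1|^2-\gamma\xi^2\operatorname{Re}(w_2\overline{w_1})\bigr).
\]
Since $\sigma b=\xi^2$ for $|\xi|\ge1$, this equals
\[
\epsilon\xi^2|w_2|^2-\epsilon\xi^2|w_1|^2-\epsilon\gamma\xi^2\sigma\operatorname{Re}(w_2\overline{w_1}).
\]
Young's inequality bounds the last term by $\tfrac{\epsilon}{2}\xi^2|w_1|^2+C\epsilon\gamma^2\xi^2|w_2|^2$. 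Choosing $\epsilon\le\gamma/(2(1+C\gamma^2))$ then gives
\[
\operatorname{Re}\langle QLw,w\rangle\le-\tfrac{\epsilon}{2}\xi^2|w_1|^2-\tfrac{\gamma}{2}\xi^2|w_2|^2\quad\text{for }|\xi|\ge1.
\]
For $|\xi|<1$ we have $Q=\Id$, and the dissipation is $\le0$, which is at least $-\xi^2|w|^2+|w|^2$. Altogether,
\[
\operatorname{Re}\langle QLw,w\rangle\le -c_\gamma\langle\xi\rangle^2|w|^2+C_\gamma|w|^2 .
\]
Getting the exact weights right here is the main computation. I expect the delicate point to be the interaction between the off-diagonal $b(\xi)\sim\xi^2$ entries and the damping $\gamma\xi^2$, since all three terms are of the same order and $\epsilon$ must be tuned against $\gamma$.

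\textbf{Energy estimate.} Let $Z(t)=S_\gamma(t)Z_0+\int_0^tS_\gamma(t-s)F(s)\,ds$, so that $\widehat Z_t=L_\gamma(\xi)\widehat Z+\widehat F$ for each $\xi$. Differentiating $\int\langle Q\widehat Z,\widehat Z\rangle\,d\xi$ gives
\[
\frac{d}{dt}\int\langle Q\widehat Z,\widehat Z\rangle\,d\xi+2c_\gamma\|Z\|_{H^1}^2\le 2C_\gamma\|Z\|_{L^2}^2+2\operatorname{Re}\int\langle Q\widehat F,\widehat Z\rangle\,d\xi .
\]
The last term is bounded by $C\|F\|_{H^{-1}}\|Z\|_{H^1}$, which is at most $c_\gamma\|Z\|_{H^1}^2+C'_\gamma\|F\|_{H^{-1}}^2$. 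Integrating in time, using $Q\sim\Id$, and applying Gronwall on $[0,T]$ with $T\le1$ (so $e^{C_\gamma T}\le e^{C_\gamma}$) yields both the $\sup_t\|Z\|_{L^2}$ bound and the $L^2_TH^1$ bound in \eqref{5.1}.

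\textbf{Rigor.} I will first carry out the computation for $Z_0\in\mathcal S$ and $F$ smooth in time with values in $\mathcal S$, where $\widehat Z(t,\xi)$ solves an ODE for each $\xi$ and everything is justified pointwise in $\xi$. The general case follows by density, since the estimate is linear. Time continuity in $L^2$ is inherited from the approximations through the uniform bound. The semigroup $S_\gamma$ is well defined as the Fourier multiplier $e^{tL_\gamma(\xi)}$, which is bounded because the same $Q$-estimate with $F=0$ gives $|e^{tL_\gamma(\xi)}|\le Ce^{C_\gamma t}$.
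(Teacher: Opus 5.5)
Your proposal is correct and follows essentially the paper's route: a frequency-wise Hermitian weight equal to the identity for $|\xi|<1$ and producing $\xi^2$-dissipation for $|\xi|\ge1$, then Young's inequality, Gronwall on $T\le1$, and density from smooth data. The only real difference is the high-frequency symmetrizer. The paper writes $L_\gamma(\xi)=k^2A_c$ and solves the Lyapunov equation $A_c^{*}P_c+P_cA_c=-I$ exactly, whereas your perturbative cross term $\epsilon\sigma(\xi)$ works equally well; your constraint on $\epsilon$, e.g.\ $\epsilon\le\gamma/(2+\gamma^2)$, also forces $\epsilon<1$, so $Q\simeq I$. The paper's proof additionally shows that every distributional solution in $\XX_T$ coincides with the mild one, which is used later in Proposition~\ref{prop:local} but is not part of the stated estimate.
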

\begin{proof}
For smooth data and forcing with compact Fourier support, set $k=|\xi|$.
For $k\ge1$, write
\[
L_\gamma(\xi)=k^2A_c,\qquad
A_c=\begin{pmatrix}0&c\\-c&-\gamma\end{pmatrix},
\qquad
c=\frac{\sqrt{1+k^4}}{k^2}\in[1,\sqrt2].
\]
Define
\[
P_c=\begin{pmatrix}
\gamma^{-1}+\gamma/(2c^2)&(2c)^{-1}\\
(2c)^{-1}&\gamma^{-1}
\end{pmatrix}.
\]
A direct calculation gives
\[
A_c^*P_c+P_cA_c=-I.
\]
Moreover,
\[
\det P_c=\gamma^{-2}+\frac{1}{4c^2},
\qquad
\operatorname{tr}P_c=2\gamma^{-1}+\frac{\gamma}{2c^2}.
\]
Hence $P_c$ is uniformly positive definite for $c\in[1,\sqrt2]$. In particular, there exist constants $0<p_-\le p_+<\infty$, depending only on $\gamma$, such that
\[
p_-I\le P_c\le p_+I.
\]
Set $P(\xi)=P_c$ for $k\ge1$ and $P(\xi)=I$ for $k<1$,
and define
\[
\mathcal L(Z)
=\int_{\R}\widehat Z(\xi)^*P(\xi)\widehat Z(\xi)
\,\mathrm{d}\xi.
\]
For $k<1$, we have
\[
L_\gamma(\xi)^*+L_\gamma(\xi)
=\operatorname{diag}(0,-2\gamma k^2)\le0.
\]
For $k\ge1$, the identity $A_c^*P_c+P_cA_c=-I$ yields the dissipation term
$-k^2|\widehat Z|^2$. Consequently,
\[
\frac{\mathrm{d}}{\mathrm{d}t}\mathcal L(Z(t))
+\int_{k\ge1}k^2|\widehat Z|^2\,\mathrm{d}\xi
\le
2\left|\int_{\R}\widehat F^*P\widehat Z
\,\mathrm{d}\xi\right|
\le C_\gamma\|F\|_{H^{-1}}\|Z\|_{H^1},
\]
where we have used the uniform bound on $P$ and Cauchy-Schwarz with weights $\langle\xi\rangle^{-1}$ and $\langle\xi\rangle$ in the last inequality.
Using
\[
\|Z\|_{H^1}^2
\le2\|Z\|_{L^2}^2
+2\int_{k\ge1}k^2|\widehat Z|^2\,\mathrm{d}\xi
\]
and the equivalence $\mathcal L(Z)\simeq\|Z\|_{L^2}^2$, it follows from Young's inequality that
\[
\frac{\mathrm{d}}{\mathrm{d}t}\mathcal L(Z(t))
+c_\gamma\|Z\|_{H^1}^2
\le C_\gamma\mathcal L(Z(t))
+C_\gamma\|F\|_{H^{-1}}^2.
\]
Multiplying by an exponential integrating factor and integrating in time yields bounds for
$\sup_{0\le t\le T}\mathcal L(Z(t))$ and $\int_0^T\|Z(t)\|_{H^1}^2\,\mathrm{d}t$,
with constants depending only on $\gamma$ for $T\le1$. Taking square roots and using the equivalence of $\mathcal L(Z)$ and $\|Z\|_{L^2}^2$ proves \eqref{5.1} for smooth data.

To extend the estimate to general data, let $\mathbb{P}_n$ be the spatial Fourier projection onto $\{|\xi|\le n\}$, and let $\widetilde F$ denote the extension of $F$ by zero outside $(0,T)$. Choose a nonnegative function $\rho\in C_c^\infty(\R)$ with $\int_\R\rho=1$, and set $\rho_\varepsilon(t)=\varepsilon^{-1} \rho(t/\varepsilon)$. Define
\[
Z_{0,n}=\mathbb{P}_nZ_0,\qquad
F_n=\left.
\bigl(\rho_{1/n}*_t \mathbb{P}_n\widetilde F\bigr)
\right|_{[0,T]}.
\]
Then
\[
Z_{0,n}\to Z_0\quad\text{in }L^2,\qquad
F_n\to F\quad\text{in }L^2(0,T;H^{-1}).
\]
Moreover, $F_n\in C^\infty([0,T];L^2)$, and the spatial Fourier transforms of $Z_{0,n}$ and $F_n(t)$ are supported in $\{|\xi|\le n\}$ for every $t$.

On the range of $\mathbb{P}_n$, the operator $L_\gamma(D)$ is bounded on $L^2$. Hence the problem
\[
\partial_tZ_n=L_\gamma(D)Z_n+F_n,\qquad
Z_n(0)=Z_{0,n},
\]
has a unique solution in $C^1([0,T];L^2)$, with spatial Fourier support contained in $\{|\xi|\le n\}$. For almost every $\xi$, its Fourier transform is given by
\[
\widehat Z_n(t,\xi)
=e^{tL_\gamma(\xi)}\widehat Z_{0,n}(\xi)
+\int_0^t
e^{(t-s)L_\gamma(\xi)}
\widehat F_n(s,\xi)\,\mathrm ds.
\]
Applying the preceding estimate to $Z_n-Z_m$, we obtain
\[
\|Z_n-Z_m\|_{\XX_T}
\le C_\gamma\left(
\|Z_{0,n}-Z_{0,m}\|_{L^2}
+\|F_n-F_m\|_{L^2(0,T;H^{-1})}
\right).
\]
Thus $(Z_n)$ converges in $\XX_T$ to a limit $Z$, and convergence in $C([0,T];L^2)$ implies $Z(0)=Z_0$. Since $L_\gamma(D):H^1\to H^{-1}$ is bounded,
\[
L_\gamma(D)Z_n+F_n
\longrightarrow L_\gamma(D)Z+F
\quad\text{in }L^2(0,T;H^{-1}).
\]
Passing to the limit in the evolution equation
therefore yields
\[
Z_t=L_\gamma(D)Z+F
\]
in the sense of distributions, with $Z_t\in L^2(0,T;H^{-1})$. Estimate \eqref{5.1} follows by passage to the limit. For zero initial data, this construction defines
the Duhamel term for general $F\in L^2(0,T;H^{-1})$. The difference estimate shows that the resulting limit is independent of the approximating sequence.

For uniqueness, let $W$ be the difference of two distributional solutions in $\XX_T$ with the same data and forcing. Let
\[
\mathbb P_n=\mathbf 1_{\{|\xi|\le n\}}(D),
\qquad W_n=\mathbb P_nW.
\]
For each fixed $n$, $\mathbb P_n: H^{-1}\to L^2$ is bounded and commutes with $L_\gamma(D)$. Hence $W_n\in C^1([0,T];L^2)$ and
\[
\partial_tW_n=L_\gamma(D)W_n,\qquad
W_n(0)=0.
\]
Since $L_\gamma(D)$ is bounded on the range of $\mathbb P_n$, uniqueness for this ODE gives $W_n=0$. Letting $n\to\infty$ yields $W=0$. Thus every distributional solution in $\XX_T$ coincides with the corresponding mild solution. When $F=0$, the Fourier formula gives the semigroup property and strong continuity at $t=0$ for compactly supported Fourier data. Density and the uniform $L^2$ bound extend continuity to all $L^2$ data.
\end{proof}

\begin{proposition}[Local well-posedness]\label{prop:local}
Fix $\gamma>0$. For every $M>0$, there exists $T_*=T_*(\gamma,M)>0$ such that each
$(\eta_0,\psi_0)\in\Ecal$ satisfying
\[
\|\eta_0\|_{H^2}+\|\partial_x\psi_0\|_{L^2}\le M
\]
admits a unique solution in the class \eqref{1.12} on $[0,T_*]$. The associated variable $Z$ satisfies
\[
\|Z\|_{\XX_{T_*}}\le C_{\gamma,M}.
\]
No $L^2$ bound on $\psi_0$ enters the lifespan. On bounded subsets of $\Ecal$, the local solution map is Lipschitz into $C([0,T_*];\Ecal)$.
\end{proposition}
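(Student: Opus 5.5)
The proof is a contraction for the mild formulation of \eqref{4.8} in a ball of $\XX_T$, with the background $f=\mathbb{P}\psi_0$ frozen. By Lemma~\ref{lem:cutoff-tools}, $\|f_x\|_{L^2}\le\|\partial_x\psi_0\|_{L^2}\le M$ and $\zeta(0)=(\Id-\mathbb{P})\psi_0\in H^1$. The initial remainder is $v_0=u_0-f=\zeta(0)+G(\eta_0)\psi_0$. By Lemma~\ref{lem:homogeneous} it satisfies $\|v_0\|_{L^2}\le C_M$. Consequently $Z_0=(b(D)\eta_0,v_0)$ satisfies $\|Z_0\|_{L^2}\le C_M$, using $\|b(D)\eta\|_{L^2}\simeq\|\eta\|_{H^2}$ from Lemma~\ref{lem:product-tools}. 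Only $\|\partial_x\psi_0\|_{L^2}$ enters these bounds, so no $L^2$ norm of $\psi_0$ appears anywhere in the construction.

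Define
\[
\Phi(Z)(t)=S_\gamma(t)Z_0+\int_0^tS_\gamma(t-s)\Fcal_f(Z(s))\,\mathrm ds
\]
on the closed set $\{Z\in\XX_T:\|Z\|_{\XX_T}\le 2C_\gamma C_M\}$, where $C_\gamma$ is the constant in \eqref{5.1}. On this set $\sup_t(\|\eta\|_{H^2}+\|v\|_{L^2})+\|f_x\|_{L^2}\le R=R(\gamma,M)$, so Corollary~\ref{cor:forcing} applies pointwise in time. The forcing bound \eqref{4.10} is uniform in time, which gives
\[
\|\Fcal_f(Z)\|_{L^2_TH^{-1}}\le T^{1/2}C_{\gamma,R}.
\]
By \eqref{4.11},
\[
\|\Fcal_f(Z_1)-\Fcal_f(Z_2)\|_{L^2_TH^{-1}}\le T^{1/2}C_{\gamma,R}\sup_t\|Z_1-Z_2\|_{L^2}.
\]
Lemma~\ref{lem:linear} then shows that $\Phi$ maps the ball into itself and is a contraction once $T_*\le1$ is chosen with $C_\gamma T_*^{1/2}C_{\gamma,R}\le\frac12$. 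Measurability of $t\mapsto\Fcal_f(Z(t))$ in $H^{-1}$ follows from continuity of $Z$ in $L^2$ together with \eqref{4.11}.

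From the fixed point, recover $\eta=b(D)^{-1}Z_1\in C_TH^2\cap L^2_TH^3$ and $v\in C_TL^2\cap L^2_TH^1$, and define $\zeta(t)$ by \eqref{4.1}. Then \eqref{4.4} gives $\zeta\in C_TH^1$, and $h=v-\zeta$ by \eqref{4.2}. Lemma~\ref{lem:linear} identifies the mild solution with a distributional solution of \eqref{4.8}, which is \eqref{4.7}; the representatives $\psi=f+\zeta$ and $u=f+v$ then satisfy \eqref{1.8} in $\mathcal D'$. For the initial condition, $(\Id+G(\eta_0))\zeta(0)=v_0-G(\eta_0)f$ holds by the definition of $v_0$, so uniqueness of the resolvent gives $\zeta(0)=(\Id-\mathbb{P})\psi_0$.

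Uniqueness in the class \eqref{1.12} needs the converse direction. A solution with the same background yields $Z\in\XX_T$ solving \eqref{4.8} distributionally, since $\zeta=R_\eta(v-G(\eta)f)$ follows from $u=\psi+G(\eta)\psi$. The uniqueness part of Lemma~\ref{lem:linear} then makes $Z$ a mild solution, and hence a fixed point. Two fixed points agree first on a short interval by the contraction, and then on all of $[0,T_*]$ by iterating, since the bound on $\|Z\|_{\XX_T}$ is a priori for any solution in the class, after possibly shrinking the interval.

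\textbf{Lipschitz dependence.} Take two data sets in the bounded subset, with backgrounds $f_1,f_2$. Then $\|\partial_x(f_1-f_2)\|_{L^2}\le\|\psi_{0,1}-\psi_{0,2}\|_{\Pcal}$ and $\|Z_{0,1}-Z_{0,2}\|_{L^2}\le C_M\|(\eta_0,\psi_0)_1-(\eta_0,\psi_0)_2\|_{\Ecal}$, the latter using \eqref{3.8} and \eqref{3.7}. Applying \eqref{5.1} to the difference of fixed points, together with \eqref{4.11} and the smallness $T_*^{1/2}$, absorbs the $Z$-difference and bounds $\|Z_1-Z_2\|_{\XX_{T_*}}$ linearly by the data difference. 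To transfer this to $C([0,T_*];\Ecal)$, write
\[
\psi_1-\psi_2=(f_1-f_2)+(\zeta_1-\zeta_2),
\]
where $f_1-f_2$ is controlled in $\Pcal$ by the data and $\zeta_1-\zeta_2$ by \eqref{4.4} in $H^1\hookrightarrow\Pcal$.

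I expect the main obstacle to be this transfer, in particular keeping every estimate dependent on $\|\psi_{x}\|_{L^2}$ and $\|\psi\|_{\dot H^{1/2}}$ only. The key point is that $f$ has Fourier support in $[-1,1]$, so $\|f\|_{\dot H^{1/2}}\le\|f\|_{\Pcal}$ bounds the $\dot H^{1/2}$ part of the potential, while the remaining norms reduce to $\partial_x f$.
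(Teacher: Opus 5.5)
Your proposal is correct and follows the paper's proof essentially step for step. It uses the same initial chart $f=\mathbb{P}\psi_0$, $v_0=(\Id-\mathbb{P})\psi_0+G(\eta_0)\psi_0$, the same Banach contraction for the mild form of \eqref{4.8} via \eqref{5.1} and Corollary~\ref{cor:forcing} with a $\sqrt{T}$ gain, the same reconstruction through \eqref{4.1}, and the same transfer of the difference estimate to $C([0,T_*];\Ecal)$ via $\psi_1-\psi_2=(f_1-f_2)+(\zeta_1-\zeta_2)$. The one loosely worded step is uniqueness against an arbitrary solution in \eqref{1.12}, which need not stay in your contraction ball; the paper makes your iteration precise by running the difference estimate with constants depending only on the $C_TL^2$ bounds of the two solutions and absorbing the factor $C\sqrt{\tau}$ on subintervals of fixed length $\tau$.
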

\begin{proof}
\emph{Initial chart and bounds.} Choose $f=\mathbb{P}\psi_0$ and
\begin{equation}\label{5.2}
v_0=(\Id-\mathbb{P})\psi_0+G(\eta_0)\psi_0.
\end{equation}
Writing $g_0=(\Id-\mathbb{P})\psi_0$, the cutoff estimate and Lemma~\ref{lem:homogeneous} imply
\[
\|g_0\|_{H^1}\le\sqrt2\|\partial_x\psi_0\|_{L^2},
\qquad
\|v_0\|_2+\|f_x\|_2
\le C_M\|\partial_x\psi_0\|_{L^2}.
\]
Since
\[
v_0-G(\eta_0)f=(\Id+G(\eta_0))g_0,
\]
equation \eqref{4.1} yields $\zeta(0)=g_0$ and hence $\psi(0)=\psi_0$. The reconstruction imposes no further compatibility condition.

\emph{Contraction.}
Let $A_M$ be a uniform bound for
$\|Z_0\|_2+\|f_x\|_2$, and choose
$R>2C_\gamma A_M+1$. For $0<T\le1$, define
\[
\Phi_f(Z)(t)
=S_\gamma(t)Z_0
+\int_0^t S_\gamma(t-s)\Fcal_f(Z(s))\,\mathrm{d}s
\]
on the closed ball of radius $R$ centered at zero in $\XX_T$. On this ball, the reconstruction $\eta=b(D)^{-1}Z_1$ gives
\[
\sup_{t\le T}\bigl(\|\eta(t)\|_{H^2}+\|v(t)\|_{L^2}\bigr)
+\|f_x\|_{L^2}
\le C R+A_M.
\]
Thus Corollary~\ref{cor:forcing} applies with uniform constants. Combining its bounds with \eqref{5.1} yields
\begin{align*}
\|\Phi_f(Z)\|_{\XX_T}
&\le C_\gamma A_M+C_{\gamma,R,M}\sqrt T,\\
\|\Phi_f(Z_1)-\Phi_f(Z_2)\|_{\XX_T}
&\le C_{\gamma,R,M}\sqrt T
\|Z_1-Z_2\|_{\XX_T}.
\end{align*}
Choose $0<T_*\le1$ such that
\[
C_{\gamma,R,M}\sqrt{T_*}\le\min\{R/2,1/2\}.
\]
Then $\Phi_f$ maps the complete ball into itself with Lipschitz constant at most $1/2$. The Banach fixed-point theorem gives a unique fixed point, with the choices depending only on $\gamma$ and $M$.

\emph{Reconstruction and the equations.} The boundedness of $b(D)^{-1}:L^2\to H^2$ and $b(D)^{-1}:H^1\to H^3$ gives $\eta\in C_T H^2\cap L_T^2H^3$. Lemma~\ref{lem:chart} yields $\zeta\in C_T H^1$, and hence $h=v-\zeta\in C_T L^2\cap L_T^2H^1$. The fixed-point equation implies \eqref{4.8} in the sense of distributions. Its components give
\[
\eta_t=v-\zeta=h,\qquad
v_t=\gamma h_{xx}-B\eta-\Ncal(\eta,f+\zeta).
\]
Together with \eqref{4.2}, these identities yield \eqref{1.8} for $u=f+v$ and $\psi=f+\zeta$.
Since $H^1\hookrightarrow\Pcal$ and $f\in\Pcal$, we have $\psi\in C_T\Pcal$.

\emph{Difference estimates.}
For two initial data sets satisfying the same bound, write $\delta Z=Z_1-Z_2$ and $\delta f=f_1-f_2$. Subtracting the mild formulas and using \eqref{5.1}
and \eqref{4.11}, we have
\[
\|\delta Z\|_{\XX_{T_*}}
\le C_\gamma\|\delta Z_0\|_{L^2}
+C_{\gamma,R,M}\sqrt{T_*}
\bigl(\|\delta Z\|_{\XX_{T_*}}
+\|\partial_x\delta f\|_{L^2}\bigr).
\]
If necessary, choose a smaller $T_*=T_*(\gamma,M)$ such that $C_{\gamma,R,M}\sqrt{T_*}\le1/2$. Absorbing the term involving $\|\delta Z\|_{\XX_{T_*}}$ into the left-hand side, it holds:
\begin{equation}\label{5.3}
\|Z_1-Z_2\|_{\XX_{T_*}}
\le C_{\gamma,M}\bigl(
\|Z_{0,1}-Z_{0,2}\|_{L^2}
+\|\partial_x(f_1-f_2)\|_{L^2}
\bigr).
\end{equation}
To estimate the initial chart variables, decompose
\[
G(\eta_{0,1})\psi_{0,1}-G(\eta_{0,2})\psi_{0,2}
=G(\eta_{0,1})(\psi_{0,1}-\psi_{0,2})
+\bigl(G(\eta_{0,1})-G(\eta_{0,2})\bigr)\psi_{0,2}.
\]
Using \eqref{5.2}, the homogeneous estimates, and the cutoff bounds, we obtain
\[
\|\delta Z_0\|_2+\|\partial_x\delta f\|_2
\le C_M\bigl(
\|\delta\eta_0\|_{H^2}
+\|\partial_x\delta\psi_0\|_2
\bigr).
\]
It follows from $H^1\hookrightarrow\Pcal$ and
$\psi_1-\psi_2=\delta f+\zeta_1-\zeta_2$ that
\[
\sup_{0\le t\le T_*}\|\psi_1(t)-\psi_2(t)\|_{\Pcal}
\le \|\delta f\|_{\Pcal}
+C\sup_{0\le t\le T_*}\|\zeta_1(t)-\zeta_2(t)\|_{H^1}.
\]
Combining Lemma~\ref{lem:chart}, \eqref{5.3}, and $\|\delta f\|_{\Pcal}\le\|\delta\psi_0\|_{\Pcal}$ proves the asserted Lipschitz dependence in
$C([0,T_*];\Ecal)$. Finally, $\delta h=\delta v-(\zeta_1-\zeta_2)$ satisfies
the corresponding difference estimate in $C_{T_*}L^2\cap L_{T_*}^2H^1$.

\emph{Uniqueness in the stated solution class.}
For any solution in the class \eqref{1.12}, the reconstructed variable satisfies $Z\in\XX_T$ and $\Fcal_f(Z)\in C_T H^{-1}$. Since $Z$ solves \eqref{4.8} distributionally,
Lemma~\ref{lem:linear} implies that it satisfies the corresponding mild formula. For two solutions with the same initial data, fix $f=\mathbb{P}\psi_0$. On a finite common interval, their $C_T L^2$ norms are bounded; the difference estimate therefore applies on subintervals of length $\tau$ with an absorption factor $C\sqrt{\tau}$. Here $C$ depends only on $\gamma$, $\|f_x\|_{L^2}$, and upper bounds for $\|Z_j\|_{C([0,T];L^2)}$, $j=1,2$. Choose $\tau$ so that $C\sqrt{\tau}\le1/2$ and iterate from the common initial value to obtain equality throughout the interval.
\end{proof}

\begin{remark}\label{rem5.3}
Let $f_1,f_2$ be admissible fixed backgrounds with $f_1-f_2\in H^1$. The change of variables
\[
v_2=v_1+f_1-f_2,\qquad
\zeta_2=\zeta_1+f_1-f_2
\]
preserves $u$, $\psi$, and $h$, as well as the reduced equations and the regularity class \eqref{1.12} on finite intervals. At a restart time $t_0$, choosing
$f_{t_0}=\mathbb{P}\psi(t_0)$ satisfies
\[
f_{t_0}-f=\mathbb{P}\zeta(t_0)\in H^1.
\]
Lemma~\ref{lem:affine-tools} verifies the compatibility of the local constructions at successive restart times.
\end{remark}

\section{Global well-posedness}\label{sec:global}
\subsection{Energy identity}\label{sec:energy}
\noindent

For a fixed background $f$, define $E_f(Z)=E(\eta,f+\zeta)$, with $\eta=b(D)^{-1}Z_1$, $v=Z_2$, and $\zeta$ given by \eqref{4.1}. This energy is finite for every $Z\in L^2(\R;\R^2)$.
\begin{lemma}[Differentiability of the energy functional]\label{lem:energy-gradient}
The functional $E_f:\HH\to\R$, with $\HH=L^2(\R;\R^2)$, is continuously Fr\'echet differentiable. Its $\HH$-gradient is
\begin{equation}\label{6.1}
\grad_ZE_f(Z)=Z+K_f(Z),\qquad
K_f(Z)=
\begin{pmatrix}
b(D)^{-1}\Ncal(\eta,f+\zeta)\\
-\zeta
\end{pmatrix}.
\end{equation}
The map $K_f:\HH\to H^1(\R;\R^2)$ is locally Lipschitz.
\end{lemma}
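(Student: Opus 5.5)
The plan is to compute the first variation of $E_f$ term by term, show that every piece is Fr\'echet differentiable with the stated derivative, and then read off the gradient and its Lipschitz bound from estimates already proved. Since $\|\eta\|_{L^2}^2+\|\eta_{xx}\|_{L^2}^2=\|b(D)\eta\|_{L^2}^2=\|Z_1\|_{L^2}^2$ and $h=v-\zeta$, we can write
\[
E_f(Z)=\tfrac12\|Z_1\|_{L^2}^2+\tfrac12\|v-\zeta\|_{L^2}^2+\tfrac12\Gform_\eta(f+\zeta,f+\zeta).
\]
The first term is trivially smooth with gradient $(Z_1,0)$. Everything therefore reduces to two facts: the map $Z\mapsto\zeta\in H^1$ is $C^1$, and the Dirichlet form term has a derivative that can be identified.

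\emph{Step 1: differentiability of the chart.} The map $\eta\mapsto G(\eta)$ is $C^1$ into $\mathcal L(H^1,L^2)$ by Theorem~\ref{thm:B-shape}, and $\eta\mapsto R_\eta$ is then $C^1$ into $\mathcal L(L^2,H^1)$. For the background term $G(\eta)f$, where $f\notin L^2$, I would use the representation \eqref{3.12}, $G(\eta)f=W_\eta-G(\eta)d_\eta$. The maps $\eta\mapsto W_\eta\in L^2$ and $\eta\mapsto d_\eta\in H^1$ are compositions of the smooth function $F=e^{y\Lambda}f$ with the graph. By \eqref{3.9} and the argument leading to \eqref{3.11}, carried one order further, they are $C^1$ from $H^2$, with derivatives $k\mapsto kF_{yy}(\cdot,\eta)-k_xF_x-\eta_x kF_{xy}$ and $k\mapsto kF_y(\cdot,\eta)$. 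Differentiating $(\Id+G(\eta))\zeta=v-G(\eta)f$ then gives
\[
\delta\zeta=R_\eta\bigl(\delta v-DG(\eta)[\delta\eta](f+\zeta)\bigr),\qquad \delta\eta=b(D)^{-1}\delta Z_1 .
\]
Here $DG(\eta)[\delta\eta]$ applied to $\psi=f+\zeta$ means the $H^1$ part plus the derivative of the $f$-part just described. As a consequence, $Z\mapsto\zeta$ is $C^1$ from $\HH$ into $H^1$.

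\emph{Step 2: derivative of the fluid energy.} This is the main obstacle, because $\psi=f+\zeta$ need not lie in $L^2$, so the smooth computation in Proposition~\ref{prop:smooth-energy} is not directly available. I would handle the trace variation and the graph variation separately.

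For the trace variation, we have $\delta\zeta\in H^1$, so the variational definition together with \eqref{4.2} gives
\[
\Gform_\eta(\psi,\delta\zeta)=\int_\R h\,\delta\zeta\dd ,
\]
which is an honest $L^2$ pairing.

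For the graph variation at fixed $\psi$, I would work in the flattened formulation of Lemma~\ref{lem:form}. There $\Gform_\eta(\psi,\psi)=\min_w\int_\Pi A_\eta\nabla w\cdot\nabla w$, and a Danskin/Hadamard argument, using the competitor comparison already used for \eqref{3.14}, gives the derivative $\int_\Pi (DA_\eta[k])\nabla w_\eta\cdot\nabla w_\eta$. This expression is continuous in $\psi\in\dot H^{1/2}$. For $\psi_n$ with Fourier support away from zero, so that $\psi_n\in H^1$, this derivative equals $\int\psi_n DG(\eta)[k]\psi_n=2\int k\,\Ncal(\eta,\psi_n)$ by \eqref{3.4} and the integration by parts in Proposition~\ref{prop:smooth-energy}. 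Passing to the limit on both sides is possible because $\Ncal$ is continuous into $L^1$ under $\dot H^1$ convergence, by the argument of Lemma~\ref{lem:chart}, and $k\in H^2\subset L^\infty$. This yields $\partial_\eta\tfrac12\Gform_\eta(\psi,\psi)[k]=\int k\Ncal\dd$ for all $\psi\in\Pcal$.

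\emph{Step 3: assembly and regularity of the gradient.} Adding the pieces, the total variation is
\[
\langle Z_1,\delta Z_1\rangle+\langle h,\delta v-\delta\zeta\rangle+\langle h,\delta\zeta\rangle+\langle \Ncal,b(D)^{-1}\delta Z_1\rangle .
\]
The two $\delta\zeta$ terms cancel. This cancellation is the key point, since it avoids any need to estimate $DG$ applied to $f$ in the final formula. Using $h=v-\zeta$ and the self-adjointness of $b(D)^{-1}$, the variation becomes $\langle Z+K_f(Z),\delta Z\rangle$, which is \eqref{6.1}. It is meaningful because $b(D)^{-1}\Ncal\in H^2\subset L^2$ by Lemma~\ref{lem:product-tools}.

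For the local Lipschitz property of $K_f$ into $H^1$, the second component $-\zeta$ is handled by \eqref{4.4}. For the first component, \eqref{4.6} with the embeddings $L^1\hookrightarrow H^{-1}$ and $b(D)^{-1}:H^{-1}\to H^1$ suffices. Since $H^1\hookrightarrow L^2$, the gradient $Z\mapsto Z+K_f(Z)$ is continuous into $\HH$. The G\^ateaux derivative is therefore continuous, and this gives continuous Fr\'echet differentiability.
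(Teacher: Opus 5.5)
Your argument is correct, but it is organized differently from the paper's. The paper first treats $f\in H^1$. Then $u=f+v\in L^2$, $\psi=R_\eta u$, and the whole kinetic energy is the quadratic form $\tfrac12\langle u,A(\eta)u\rangle_{L^2}$ with $A(\eta)=\Id-R_\eta$. Varying $Z_1$ at fixed $Z_2$ keeps $u$ fixed, so the graph derivative reduces to $DR_\eta[k]=-R_\eta\,DG(\eta)[k]\,R_\eta$, and the $v$-derivative is simply $\langle Au,w\rangle=\langle h,w\rangle$; no chain rule through $\zeta$ is needed. The homogeneous background is then reached by approximation. With $\widehat f_n=\mathbf 1_{\{|\xi|\ge 1/n\}}\widehat f$, the paper shows via \eqref{3.14} and \eqref{4.6} that $E_{f_n}\to E_f$, and $K_{f_n}\to K_f$ in $H^1$, uniformly on balls. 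It then passes to the limit in the integrated mean-value identity.

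You work at the homogeneous background directly, which costs two ingredients the paper avoids.
\begin{enumerate}
\item You need $C^1$ dependence of $\eta\mapsto G(\eta)f\in L^2$ for $f\notin L^2$. This goes through \eqref{3.12}, which the paper proves only for $\widehat f$ supported away from zero. You should record its extension by density, using \eqref{3.7} and \eqref{3.10}, before doing the second-order Taylor bounds for $F$ along the graph.
\item You need a Hadamard formula $\int_\Pi \dot A[k]\nabla w_\eta\cdot\nabla w_\eta$ for the shape derivative of $\Gform_\eta(\psi,\psi)$ on all of $\dot H^{1/2}$. Closing the two competitor bounds uses the Lax--Milgram estimate $\|\nabla(w_{\eta+k}-w_\eta)\|_{L^2(\Pi)}\le C_M\|k\|_{H^2}\|\psi\|_{\dot H^{1/2}}$. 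The formula is then identified with $2\int k\Ncal$ by density from $H^1$ traces.
\end{enumerate}

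In return you obtain stronger byproducts: $Z\mapsto\zeta$ is $C^1$ into $H^1$, and the graph derivative of the fluid energy is identified intrinsically for every $\psi\in\Pcal$. You also expose the reason the $\delta\zeta$ terms cancel. The kinetic energy equals $\min_{\zeta\in H^1}\bigl[\tfrac12\|v-\zeta\|_{L^2}^2+\tfrac12\Gform_\eta(f+\zeta,f+\zeta)\bigr]$, whose Euler--Lagrange equation is exactly \eqref{4.2}, so the cancellation is an envelope-theorem effect.

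One small slip: since $\Ncal$ is only in $L^1\hookrightarrow H^{-1}$, $b(D)^{-1}\Ncal$ lies in $H^1$, not $H^2$. The inclusion $H^1\subset L^2$ is all you need at that point.
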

\begin{proof}
We first assume $f\in H^1$ and then remove this restriction.
Write
\[
G=G(\eta),\qquad R=(\Id+G)^{-1},\qquad A=GR=\Id-R.
\]
The operator $G$ is self-adjoint on $L^2$ with domain $H^1$, while $R$ and $A$ are bounded self-adjoint operators on $L^2$. Since $u=f+v\in L^2$, we have
$\psi=Ru\in H^1$ and $h=G\psi=Au$. Using $u=\psi+h$, we obtain
\[
\|h\|_2^2+\Gform_\eta(\psi,\psi)
=\langle h,h+\psi\rangle_{L^2}
=\langle Au,u\rangle_{L^2}.
\]
Thus the kinetic energy is $\tfrac12\langle u,Au\rangle_{L^2}$.

\emph{Variation of the graph.}
Hold $u\in L^2$ fixed. By Theorem~\ref{thm:B-shape}, the map $\eta\mapsto G(\eta)$ is continuously Fr\'echet differentiable from the real-valued subspace of $H^2$
into $\mathcal L(H^1,L^2)$. Moreover, Lemma~\ref{lem:elliptic-input} shows that
$R_\eta\in\mathcal L(L^2,H^1)$ and that $\eta\mapsto R_\eta$ is locally Lipschitz from $H^2$ into $\mathcal L(L^2,H^1)$. For a real-valued increment $k\in H^2$, write
\[
\widetilde G=G(\eta+k),\qquad
\widetilde R=(\Id+\widetilde G)^{-1},\qquad
Q_k=\widetilde G-G-DG(\eta)[k].
\]
It follows from the remainder estimate \eqref{B22} that
\[
\|Q_k\|_{\mathcal L(H^1,L^2)}
\le C_M\|k\|_{H^2}^2,
\]
whenever $\|\eta\|_{H^2},\|\eta+k\|_{H^2}\le M$ and $\|k\|_{H^2}\le1$.
The resolvent identity, interpreted in $\mathcal L(L^2,H^1)$, yields
\[
\widetilde R-R=-\widetilde R(\widetilde G-G)R.
\]
Consequently,
\[
\widetilde R-R+R\,DG(\eta)[k]R
=(R-\widetilde R)DG(\eta)[k]R-\widetilde RQ_kR.
\]
Combining \eqref{3.2}, \eqref{3.3}, and \eqref{3.6},
we obtain
\[
\|\widetilde R-R+R\,DG(\eta)[k]R
  \|_{\mathcal L(L^2,H^1)}
\le C_M\|k\|_{H^2}^2.
\]
Thus $\eta\mapsto R_\eta$ is Fr\'echet differentiable
with values in $\mathcal L(L^2,H^1)$, and
\[
DR_\eta[k]=-R_\eta DG(\eta)[k]R_\eta.
\]
Continuity of this derivative follows from the operator-norm continuity of $R_\eta$ and $DG(\eta)$.

Since $A(\eta)=\Id-R_\eta$, the map $\eta\mapsto A(\eta)$ is continuously Fr\'echet
differentiable with values in $\mathcal L(L^2,L^2)$, and its derivative satisfies the stronger mapping property
\[
DA(\eta)[k]
=R_\eta DG(\eta)[k]R_\eta
\in\mathcal L(L^2,H^1),\qquad
\|DA(\eta)[k]\|_{\mathcal L(L^2,H^1)}
\le C_M\|k\|_{H^2}.
\]
Using the self-adjointness of $R$ and $\psi=Ru$, it follows
\[
\frac12\langle u,DA(\eta)[k]u\rangle_{L^2}
=\frac12\langle\psi,DG(\eta)[k]\psi\rangle_{L^2}.
\]
Let
\[
\beta=\frac{h+\eta_x\psi_x}{1+\eta_x^2},
\qquad
\nu=\psi_x-\eta_x\beta.
\]
Both $\beta$ and $\nu$ belong to $L^2$. Testing \eqref{3.4} against $\psi\in H^1$ and using $G\psi=h$, we obtain
\begin{align*}
\frac12\langle\psi,DG(\eta)[k]\psi\rangle_{L^2}
&=\frac12\int_{\R}k(-h\beta+\psi_x\nu)\,\mathrm{d}x\\
&=\frac12\int_{\R}
k\bigl(\psi_x^2-\beta(h+\eta_x\psi_x)\bigr)
\,\mathrm{d}x\\
&=\int_{\R}k\Ncal(\eta,\psi)\,\mathrm{d}x.
\end{align*}
These pairings are well defined. By self-adjointness, $G:H^1\to L^2$ admits a bounded extension $G:L^2\to H^{-1}$ defined by duality. Since $k\beta,k\nu\in L^2$, both terms in the distributional identity belong to $H^{-1}$. Density therefore extends the identity to test functions in $H^1$, including $\psi$.

\emph{Variation of the velocity variable.}
At a fixed graph, it follows from self-adjointness of $A$ that
\[
D_v\!\left(\tfrac12\langle u,Au\rangle_{L^2}\right)[w]
=\langle Au,w\rangle_{L^2}
=\langle h,w\rangle_{L^2},
\qquad w\in L^2.
\]
Write $\widetilde E_f(\eta,v)=E_f(b(D)\eta,v)$. Combining the graph and velocity variations with the derivative of the structural energy, we obtain
\begin{equation}\label{6.2}
D\widetilde E_f(\eta,v)[k,w]
=\langle\eta,k\rangle_{L^2}
+\langle\eta_{xx},k_{xx}\rangle_{L^2}
+\int_{\R}\Ncal k\,\mathrm{d}x
+\langle h,w\rangle_{L^2}.
\end{equation}
In the $Z$ coordinates, a direction $(a,w)$ has $k=b(D)^{-1}a$, so the corresponding gradient is
\[
\bigl(b(D)\eta+b(D)^{-1}\Ncal,h\bigr),
\]
which is \eqref{6.1}. The bound $b(D)^{-1}:H^{-1}\to H^1$, Lemma~\ref{lem:chart}, and $L^1\hookrightarrow H^{-1}$ show that $K_f$ is locally Lipschitz into $H^1$.
Integrating the directional derivative along the segment from $Z$ to $Z+W$ gives
\[
\bigl|E_f(Z+W)-E_f(Z)
-\langle Z+K_f(Z),W\rangle_{L^2}\bigr|
\le C\|W\|_{L^2}^2
\]
for sufficiently small $W$. This proves continuous Fr\'echet differentiability.

\emph{Homogeneous background.}
For a general background $f\in\Pcal$ with
$\supp\widehat f\subset[-1,1]$, define
\[
\widehat f_n
=\mathbf1_{\{|\xi|\ge1/n\}}\widehat f.
\]
Then $f_n\in H^1$, $f_n\to f$ in $\Pcal$, and $\partial_xf_n\to\partial_xf$ in $L^2$.
For each $Z=(b(D)\eta,v)$, set
\[
\zeta_n=R_\eta(v-G(\eta)f_n),\qquad
\psi_n=f_n+\zeta_n,\qquad h_n=v-\zeta_n.
\]
Fix $R_0>0$, suppose that $\sup_{\|Z\|_{\HH}}\le R_0$, and set $d_n=f_n-f$. Since  $\|f_n\|_{\mathcal P}\le\|f\|_{\mathcal P}$, the constants below are independent of $n$ and of $Z$ in this ball. By \eqref{3.2} and \eqref{3.7}, it follows
\[
\zeta_n-\zeta=-R_\eta G(\eta)d_n,\qquad
\|\zeta_n-\zeta\|_{H^1}+\|h_n-h\|_{L^2}
\le C_{R_0}\|\partial_xd_n\|_{L^2}.
\]
Consequently,
\[
\|\psi_n-\psi\|_{\dot H^{1/2}}
\le \|d_n\|_{\dot H^{1/2}}+C_{R_0}\|\partial_xd_n\|_{L^2}.
\]
The bounds \eqref{4.3} and $\|f_n\|_{\mathcal P}\le\|f\|_{\mathcal P}$ give uniform bounds for $h_n$ in $L^2$ and $\psi_n$ in $\dot H^{1/2}$. The structural energy terms cancel because both energies are evaluated at the same graph. Applying \eqref{3.14} to the fluid energy and the preceding $L^2$ estimate to the beam kinetic energy yields
\[
\sup_{\|Z\|_\HH\le R_0}|E_{f_n}(Z)-E_f(Z)|
\le C_{R_0,f}\|d_n\|_{\mathcal P}\longrightarrow0.
\]
Moreover, \eqref{4.6} gives
\(\|\mathcal N(\eta,\psi_n)-\mathcal N(\eta,\psi)\|_{L^1}
\le C_{R_0,f}\|\partial_xd_n\|_{L^2}\).
Thus $L^1\hookrightarrow H^{-1}$, $b(D)^{-1}:H^{-1}\to H^1$, and \eqref{6.1} imply
\[
\sup_{\|Z\|_\HH\le R_0}\|K_{f_n}(Z)-K_f(Z)\|_{H^1}
\le C_{R_0,f}\|\partial_xd_n\|_{L^2}\longrightarrow0.
\]
Passing to the limit in
\[
E_{f_n}(Z+W)-E_{f_n}(Z)
=\int_0^1
\langle Z+\theta W+K_{f_n}(Z+\theta W),W\rangle_{L^2}
\,\mathrm{d}\theta
\]
yields the corresponding identity for $f$. Since $Z\mapsto Z+K_f(Z)$ is locally Lipschitz in $L^2$, it follows that $E_f\in C^1(\HH;\R)$ with gradient \eqref{6.1}, even when $f\notin L^2$. Lemma~\ref{lem:chart} and the multiplier bound above also give local Lipschitz continuity of $K_f$ into $H^1$.
\end{proof}

The preceding lemma identifies the $L^2$-gradient of $E_f$ and shows that its correction term $K_f$ takes values in $H^1$. To justify differentiation along local solutions with $Z_t\in L^2(0,T;H^{-1})$, we next establish a chain rule on the Gelfand triple $H^1\subset L^2\subset H^{-1}$.
\begin{lemma}\label{lem:chain}
Let
\[
\VV=H^1(\R;\R^2)\subset\HH=L^2(\R;\R^2)
\subset\dual=H^{-1}(\R;\R^2).
\]
Suppose that $\mathcal J\in C^1(\HH;\R)$ has $\HH$-gradient
\[
\grad\mathcal J(Z)=Z+K(Z),
\]
where $K:\HH\to\VV$ is locally Lipschitz. If
\[
Z\in C([0,T];\HH)\cap L^2(0,T;\VV),
\qquad
Z_t\in L^2(0,T;\dual),
\]
then $t\mapsto\mathcal J(Z(t))$ is absolutely continuous on $[0,T]$ and satisfies
\begin{equation}\label{6.3}
\mathcal J(Z(t))-\mathcal J(Z(s))
=\int_s^t
\ip{Z_\tau(\tau)}{\grad\mathcal J(Z(\tau))}_{\dual,\VV}
\,\mathrm{d}\tau,
\qquad 0\le s\le t\le T.
\end{equation}
\end{lemma}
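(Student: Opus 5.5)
The plan is to regularize $Z$ in time, apply the classical chain rule to the regularized curves, and pass to the limit using the splitting $\grad\mathcal J=Z+K(Z)$. Extend $Z$ to a slightly larger interval by reflection at $0$ and $T$. This preserves $Z\in C(\HH)\cap L^2(\VV)$ and $Z_t\in L^2(\dual)$ on a neighborhood of $[0,T]$. Mollify in time to obtain $Z^\varepsilon=\rho_\varepsilon*_t Z\in C^1([0,T];\VV)$. The approximations satisfy
\[
Z^\varepsilon\to Z \text{ in } C([0,T];\HH),\qquad
Z^\varepsilon\to Z \text{ in } L^2(0,T;\VV),\qquad
Z^\varepsilon_t\to Z_t \text{ in } L^2(0,T;\dual).
\]
Uniform convergence in $\HH$ comes from the uniform continuity of $Z$ on the compact interval. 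Since $Z^\varepsilon$ is $C^1$ into $\VV\subset\HH$ and $\mathcal J\in C^1(\HH)$, the classical chain rule gives
\[
\mathcal J(Z^\varepsilon(t))-\mathcal J(Z^\varepsilon(s))
=\int_s^t\langle Z^\varepsilon_\tau,Z^\varepsilon+K(Z^\varepsilon)\rangle_{\HH}\,\mathrm d\tau .
\]
Because $Z^\varepsilon_\tau\in\HH$ and $Z^\varepsilon+K(Z^\varepsilon)\in\VV$, the $\HH$ pairing coincides with the duality pairing $\langle\cdot,\cdot\rangle_{\dual,\VV}$ in the Gelfand triple.

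Next I would pass to the limit $\varepsilon\to0$. The left side converges to $\mathcal J(Z(t))-\mathcal J(Z(s))$ by continuity of $\mathcal J$ on $\HH$ and uniform convergence. On the right, the term $\int\langle Z^\varepsilon_\tau,Z^\varepsilon\rangle$ converges to $\int\langle Z_\tau,Z\rangle_{\dual,\VV}$, since $Z^\varepsilon_t\to Z_t$ in $L^2\dual$ and $Z^\varepsilon\to Z$ in $L^2\VV$. For the $K$ term, the set $\mathcal K=\{Z(\tau)\}$ is compact in $\HH$. A locally Lipschitz map is Lipschitz on a neighborhood of a compact set, so there exist $\delta>0$ and $L$ with $\|K(a)-K(b)\|_{\VV}\le L\|a-b\|_{\HH}$ whenever $a,b$ lie within $\delta$ of $\mathcal K$. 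This gives $K(Z^\varepsilon)\to K(Z)$ in $C([0,T];\VV)$, and hence in $L^2\VV$. The right side therefore converges to the integral in \eqref{6.3}.

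The integrand $\langle Z_\tau,Z+K(Z)\rangle$ is in $L^1$, being the product of $L^2\dual$ and $L^2\VV$ functions. The identity \eqref{6.3} then yields absolute continuity of $t\mapsto\mathcal J(Z(t))$. I expect the main obstacle to be the uniform-Lipschitz step on a neighborhood of the compact orbit. Local Lipschitz continuity only supplies balls around each point; the standard covering argument with half-radius balls and a Lebesgue number handles this. It is also the reason $K$ must map into $\VV$: only then does $\langle Z_\tau,K(Z)\rangle$ make sense when $Z_\tau\in\dual$. A secondary technical point is to verify that the time reflection preserves $Z_t\in L^2\dual$. This holds because the reflected function is continuous at the junctions, so no delta mass appears in its time derivative.
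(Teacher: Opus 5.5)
Your proposal is correct and is essentially the paper's argument: time mollification, the classical chain rule for the smooth approximants, a uniform Lipschitz bound for $K$ on a neighborhood of the compact orbit $Z([0,T])\subset\HH$, and passage to the limit through the $L^2(\dual)\times L^2(\VV)$ pairing. The only difference is that you reflect $Z$ across $0$ and $T$ so that you can mollify up to the endpoints. The paper instead mollifies on interior intervals $[a,b]\subset(0,T)$ and reaches $s,t\in\{0,T\}$ using the continuity of $t\mapsto\mathcal J(Z(t))$ and the integrability of $\ip{Z_t}{\grad\mathcal J(Z)}_{\dual,\VV}$.
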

\begin{proof}
Fix $[a,b]\subset(0,T)$ and let $\rho\in C_c^\infty((-1,1))$ be a standard mollifier.
For $0<\varepsilon<\min\{a,T-b\}$, define $Z^\varepsilon=\rho_\varepsilon*_t Z$ on $[a,b]$. Then $Z^\varepsilon$ is smooth as an $\HH$-valued function,
and
\[
Z^\varepsilon\to Z
\quad\text{in }C([a,b];\HH)\cap L^2(a,b;\VV),
\qquad
\partial_tZ^\varepsilon\to Z_t
\quad\text{in }L^2(a,b;\dual).
\]
Uniform continuity of $Z:[0,T]\to\HH$ yields $Z^\varepsilon\to Z$ in $C([a,b];\HH)$, while
the approximation property of time convolution gives convergence in $L^2(a,b;\VV)$.
Since $[a,b]\subset(0,T)$, convolution commutes with the distributional time derivative for sufficiently small $\varepsilon$, so that
\[
\partial_tZ^\varepsilon=\rho_\varepsilon*_t Z_t
\quad\text{on }(a,b).
\]
Applying the same approximation property to
$Z_t\in L^2(0,T;\dual)$ yields
$\partial_tZ^\varepsilon\to Z_t$
in $L^2(a,b;\dual)$.

Since $Z([a,b])$ is compact in $\HH$, a finite covering by neighborhoods on which $K$ is Lipschitz gives
\[
\sup_{t\in[a,b]}
\|K(Z^\varepsilon(t))-K(Z(t))\|_{\VV}
\le C\sup_{t\in[a,b]}
\|Z^\varepsilon(t)-Z(t)\|_{\HH}
\]
for sufficiently small $\varepsilon$. Thus $K(Z^\varepsilon)\to K(Z)$ uniformly in $\VV$. Together with the convergence of $Z^\varepsilon$ in $L^2(a,b;\VV)$, this yields
\[
\nabla\mathcal J(Z^\varepsilon)
\to\nabla\mathcal J(Z)
\quad\text{in }L^2(a,b;\VV).
\]
Applying the Banach-space chain rule to $Z^\varepsilon$, we have
\[
\mathcal J(Z^\varepsilon(b))
-\mathcal J(Z^\varepsilon(a))
=\int_a^b
\langle\partial_tZ^\varepsilon,
\nabla\mathcal J(Z^\varepsilon)\rangle_{\dual,\VV}
\,\mathrm{d}t.
\]
Here the $\HH$ inner product agrees with the $\dual$--$\VV$ pairing. The $L^1(a,b)$ norm of the difference between the integrands is bounded by
\[
\|\partial_tZ^\varepsilon-Z_t\|_{L^2(a,b;\dual)}
 \|\nabla\mathcal J(Z^\varepsilon)\|_{L^2(a,b;\VV)}
+\|Z_t\|_{L^2(a,b;\dual)}
\|\nabla\mathcal J(Z^\varepsilon)
-\nabla\mathcal J(Z)\|_{L^2(a,b;\VV)},
\]
which tends to zero. Continuity of $\mathcal J$ on $\HH$ gives convergence of the endpoint values, so passage to the limit proves the identity on $[a,b]$.

Finally, continuity of $K:\HH\to\VV$ and compactness of $Z([0,T])$ bound $K(Z(t))$ in $\VV$. Thus $\nabla\mathcal J(Z)\in L^2(0,T;\VV)$ and its pairing with $Z_t$ lies in $L^1(0,T)$. Continuity extends \eqref{6.3} to all $0\le s\le t\le T$, and the integral representation gives absolute continuity of $t\mapsto\mathcal J(Z(t))$.
\end{proof}

\begin{proposition}\label{prop:energy}
Every local solution of Proposition~\ref{prop:local} satisfies the energy identity \eqref{1.14} on its interval of existence.
\end{proposition}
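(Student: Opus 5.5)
We apply Lemma~\ref{lem:chain} with $\mathcal J=E_f$, where $f=\mathbb{P}\psi_0$ is the background fixed in Proposition~\ref{prop:local}. By Lemma~\ref{lem:energy-gradient}, $E_f\in C^1(\HH;\R)$ with gradient $Z+K_f(Z)$, and $K_f:\HH\to\VV$ is locally Lipschitz. The local solution satisfies $Z\in\XX_{T_*}=C([0,T_*];\HH)\cap L^2(0,T_*;\VV)$. By \eqref{4.8}, we have $Z_t=L_\gamma(D)Z+\Fcal_f(Z)$. The first term lies in $L^2(0,T_*;\dual)$ because $L_\gamma(D):H^1\to H^{-1}$ is bounded. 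The second lies in $C([0,T_*];\dual)$ by Corollary~\ref{cor:forcing}. Hence the hypotheses of Lemma~\ref{lem:chain} hold, and
\[
E_f(Z(t))-E_f(Z(s))=\int_s^t\ip{Z_\tau}{Z+K_f(Z)}_{\dual,\VV}\,\mathrm d\tau .
\]
Since $E_f(Z(t))=E(\eta(t),\psi(t))$ by construction, it remains to identify the integrand with $-\gamma\|h_x\|_{L^2}^2$ for a.e.\ $\tau$.

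For this, we write the gradient in components: $Z+K_f(Z)=(b(D)\eta+b(D)^{-1}\Ncal,\;v-\zeta)=(b(D)\eta+b(D)^{-1}\Ncal,\;h)$, and both entries lie in $H^1$ for a.e.\ $\tau$. The first component of $Z_t$ is $\partial_t b(D)\eta=b(D)h$, using $\eta_t=v-\zeta=h$. The second is $v_t=\gamma h_{xx}-B\eta-\Ncal$, which is the reduced form of \eqref{4.7}. The pairing then becomes
\[
\ip{b(D)h}{b(D)\eta+b(D)^{-1}\Ncal}+\ip{\gamma h_{xx}-B\eta-\Ncal}{h}.
\]
Here every term is an $H^{-1}$--$H^1$ duality pairing. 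The multiplier $b(D)$ is real, even and symmetric, so $\ip{b(D)h}{b(D)\eta}=\ip{B\eta}{h}$, where $B\eta\in H^{-1}$ for a.e.\ $\tau$ because $\eta\in H^3$. Likewise $\ip{b(D)h}{b(D)^{-1}\Ncal}=\ip{\Ncal}{h}$, with $\Ncal\in L^1\subset H^{-1}$ and $h\in H^1$. These two terms cancel against $-B\eta$ and $-\Ncal$. What remains is $\gamma\ip{h_{xx}}{h}_{H^{-1},H^1}=-\gamma\|h_x\|_{L^2}^2$, obtained by integrating by parts on Schwartz approximants of $h\in H^1$.

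The main obstacle is to justify these symmetric duality manipulations at the available regularity. One needs $h\in H^1$ and $\eta\in H^3$ for a.e.\ $\tau$, which holds by \eqref{1.12}, and $\Ncal\in L^1\cap H^{-1}$, which holds by \eqref{4.5}. Each identity, for instance $\ip{b(D)h}{b(D)^{-1}\Ncal}=\ip{\Ncal}{h}$, should be proved by density: first for Schwartz functions via Plancherel, then extended using the continuity of $b(D):H^1\to H^{-1}$ and $b(D)^{-1}:H^{-1}\to H^1$ from Lemma~\ref{lem:product-tools}. With the integrand identified, the chain-rule identity is exactly \eqref{1.14} on $[0,T_*]$. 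Finally, $\int_0^{T_*}\|h_x\|_2^2\,\mathrm d\tau<\infty$ because $h\in L^2(0,T_*;H^1)$, so every term in \eqref{1.14} is finite.
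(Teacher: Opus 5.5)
Your proposal is correct and follows essentially the same route as the paper. You verify the hypotheses of Lemma~\ref{lem:chain} via \eqref{4.8}, Corollary~\ref{cor:forcing} and Lemma~\ref{lem:energy-gradient}, then identify the integrand using the symmetry of $b(D)$, so that the $B\eta$ and $\Ncal$ terms cancel and only $-\gamma\|h_x\|_{L^2}^2$ remains.
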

\begin{proof}
Fix a finite interval $[0,T]$ within the existence interval of the
local solution, and retain the time-independent background $f$ used in
its construction. Write $\psi=f+\zeta$, $h=v-\zeta=\eta_t$, and
$\Ncal=\Ncal(\eta,\psi)$. By the solution class,
\[
Z=(b(D)\eta,v)\in C_TL^2\cap L_T^2H^1.
\]
Both $b(D)$ and $\partial_x^2$ map $H^1$ continuously into $H^{-1}$.
Hence $L_\gamma(D)Z\in L_T^2H^{-1}$. Since
$\Fcal_f(Z)\in C_TH^{-1}\subset L_T^2H^{-1}$, equation \eqref{4.8}
implies $Z_t\in L_T^2H^{-1}$. Lemma~\ref{lem:energy-gradient}
establishes that $E_f\in C^1(L^2;\R)$ and that its gradient has the
form required by Lemma~\ref{lem:chain}. The latter lemma therefore
shows that
\[
E(t):=E(\eta(t),\psi(t))=E_f(Z(t))
\]
is absolutely continuous on $[0,T]$ and satisfies the stated
$H^{-1}$-$H^1$ chain rule.

By \eqref{6.1} and $Z=(b(D)\eta,v)$, it holds that
\[
\nabla_ZE_f(Z)
=\begin{pmatrix}
b(D)\eta+b(D)^{-1}\Ncal\\
v-\zeta
\end{pmatrix}
=\begin{pmatrix}
b(D)\eta+b(D)^{-1}\Ncal\\
h
\end{pmatrix}.
\]
Moreover, \eqref{4.7} gives, in $H^{-1}$ for almost every $t$,
\[
Z_t=
\begin{pmatrix}
b(D)h\\
\gamma h_{xx}-B\eta-\Ncal
\end{pmatrix}.
\]
The symbol $b(\xi)$ is real and even, and $B=b(D)^2$. For almost every time, $h\in H^1$, $\eta\in H^3$, and $\Ncal\in H^{-1}$, so all the following duality pairings are well defined. The symmetry of the Fourier multiplier $b(D)$, together with $b(D)^2=B$, gives
\[
\begin{aligned}
\langle b(D)h,b(D)\eta\rangle_{H^{-1},H^1}
  &=\langle B\eta,h\rangle_{H^{-1},H^1},\\
\langle b(D)h,b(D)^{-1}\Ncal\rangle_{H^{-1},H^1}
  &=\langle\Ncal,h\rangle_{H^{-1},H^1}.
\end{aligned}
\]
These identities hold first for real-valued Schwartz functions and extend to the stated Sobolev spaces by density and continuity of the multipliers and duality pairings.
Consequently, the chain rule yields, for almost every $t\in(0,T)$,
\begin{align*}
E'(t)
&=\bigl\langle b(D)h,\,
       b(D)\eta+b(D)^{-1}\Ncal
  \bigr\rangle_{H^{-1},H^1}
  +\bigl\langle\gamma h_{xx}-B\eta-\Ncal,\,
       h\bigr\rangle_{H^{-1},H^1}\\
&=\langle B\eta+\Ncal,h\rangle_{H^{-1},H^1}
  +\langle\gamma h_{xx}-B\eta-\Ncal,h\rangle_{H^{-1},H^1}\\
&=\gamma\langle h_{xx},h\rangle_{H^{-1},H^1}\\
&=-\gamma\|h_x\|_{L^2}^2.
\end{align*}
The pairings are integrable in time: $\eta\in L_T^2H^3$ implies $B\eta\in L_T^2H^{-1}$, while $\Ncal\in C_TH^{-1}$ and $h\in L_T^2H^1$. The last equality follows from distributional integration by parts. Integrating the almost-everywhere identity and
using the absolute continuity of $E$, we obtain
\[
E(t)+\gamma\int_s^t\|h_x(\tau)\|_{L^2}^2\,\mathrm d\tau=E(s),
\qquad 0\le s\le t\le T.
\]
Thus \eqref{1.14} is proved.
\end{proof}

\subsection{Global continuation}\label{sec:Continuation}
\begin{lemma}\label{lem:coercivity}
For $\|\eta\|_{H^2}\le M$ and $\psi\in\Pcal$, we have
\begin{equation}\label{6.4}
E(\eta,\psi)\simeq
\|\eta\|_{H^2}^2+\|\psi\|_{\Pcal}^2.
\end{equation}
Consequently, any solution with $E(t)\le E_0$ obeys \eqref{1.15} throughout its interval of existence.
\end{lemma}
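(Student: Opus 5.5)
The plan is to compare the four terms of $E(\eta,\psi)$ in \eqref{1.13} with the corresponding norms, one term at a time, using the homogeneous estimates of Section~\ref{sec:elliptic}. For the structural part, the identity $\|\eta\|_{L^2}^2+\|\eta_{xx}\|_{L^2}^2=\int(1+\xi^4)|\widehat\eta|^2\,\mathrm d\xi$ and the bound $\tfrac12\langle\xi\rangle^4\le b(\xi)^2\le\langle\xi\rangle^4$ from Lemma~\ref{lem:product-tools} give
\[
\tfrac12\|\eta\|_{H^2}^2\le\|\eta\|_{L^2}^2+\|\eta_{xx}\|_{L^2}^2\le\|\eta\|_{H^2}^2 .
\]
These constants are absolute.

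For the kinetic part, I would invoke \eqref{3.7} of Lemma~\ref{lem:homogeneous}. It gives $C_M^{-1}\|\psi_x\|_{L^2}^2\le\|G(\eta)\psi\|_{L^2}^2\le C_M\|\psi_x\|_{L^2}^2$ for every $\psi\in\Pcal$ when $\|\eta\|_{H^2}\le M$. I would then invoke \eqref{3.13} of Lemma~\ref{lem:form}, which gives $\Gform_\eta(\psi,\psi)\simeq\|\psi\|_{\dot H^{1/2}}^2$ with constants $C_M$. Adding the two and recalling \eqref{1.10}, the kinetic terms are equivalent to $\|\psi\|_{\Pcal}^2$. Summing with the structural terms proves \eqref{6.4}, with constants depending only on $M$. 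Both cited lemmas are stated for all of $\Pcal$, so no $L^2$ pairing of $\psi$ with $G(\eta)\psi$ is needed.

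For the consequence \eqref{1.15}, the point needing care is that the constant in \eqref{6.4} depends on $M$, a bound on $\eta$. That bound must come from the energy itself, not be assumed. The structural part does this with absolute constants: $E(t)\le E_0$ implies $\|\eta(t)\|_{H^2}^2\le 4E(t)\le4E_0$. Similarly, $\|h(t)\|_{L^2}^2\le 2E_0$, since $h=G(\eta)\psi$.

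With $M:=2\sqrt{E_0}$, which depends only on $E_0$, the lower bound in \eqref{6.4} then gives $\|\psi(t)\|_{\Pcal}^2\le C_M E(t)\le C(E_0)$. Summing the three bounds yields \eqref{1.15} at every time in the existence interval. Because the bootstrap on $M$ closes directly from the nonnegative structural terms, this step is not a genuine obstacle, and no continuity argument is required.
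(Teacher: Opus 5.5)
Your proposal is correct and follows the paper's proof essentially step for step. The structural terms are handled by $\tfrac12\langle\xi\rangle^4\le 1+\xi^4\le\langle\xi\rangle^4$, the kinetic terms by \eqref{3.7} and \eqref{3.13}, and the dependence on $M$ is closed using $\|\eta(t)\|_{H^2}\le 2\sqrt{E_0}$ and $\|h(t)\|_{L^2}\le\sqrt{2E_0}$, which come from the nonnegative energy terms. The only cosmetic difference is that the paper takes $M=1+2\sqrt{E_0}$ instead of your $2\sqrt{E_0}$, which changes nothing.
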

\begin{proof}
For every $\xi\in\R$,
\[
\tfrac12(1+\xi^2)^2\le1+\xi^4\le(1+\xi^2)^2.
\]
Thus $\|\eta\|_{L^2}^2+\|\eta_{xx}\|_{L^2}^2$ is equivalent to $\|\eta\|_{H^2}^2$. For $\|\eta\|_{H^2}\le M$, it follows from Lemmas~\ref{lem:homogeneous} and~\ref{lem:form} that
\[
\|G(\eta)\psi\|_{L^2}^2\simeq\|\psi_x\|_{L^2}^2,
\qquad
\Gform_\eta(\psi,\psi)
\simeq\|\psi\|_{\dot H^{1/2}}^2.
\]
These comparisons prove \eqref{6.4}.

If $E(t)\le E_0$, the nonnegative energy terms give
\[
\|\eta(t)\|_{H^2}\le2\sqrt{E_0},
\qquad
\|h(t)\|_{L^2}\le\sqrt{2E_0}.
\]
Choosing $M=1+2\sqrt{E_0}$ in the fixed-graph estimates, we have
\[
\|\psi_x(t)\|_{L^2}^2+\|\psi(t)\|_{\dot H^{1/2}}^2
\le C_M E_0
\]
uniformly in time. This proves \eqref{1.15}. The upper bound in \eqref{6.4} also shows that bounded subsets of $\Ecal$ have uniformly bounded energy, as required for stability.
\end{proof}

We now begin to prove Theorem~\ref{thm:main}.

\begin{proof}[Proof of Theorem~\ref{thm:main}]
Proposition~\ref{prop:local} gives a unique local solution whose $\Ecal$ norm is bounded in terms of $E_0$ by Proposition~\ref{prop:energy} and Lemma~\ref{lem:coercivity}. Let $T_{\max}$ denote its maximal forward existence time.

For $0\le t_0<T_{\max}$, set
\[
f_{t_0}=\mathbb{P}\psi(t_0),\qquad
v_{t_0}=(\Id-\mathbb{P})\psi(t_0)+h(t_0).
\]
The cutoff estimates and the energy bound imply
\begin{equation}\label{6.5}
\|\eta(t_0)\|_{H^2}
+\|\partial_x f_{t_0}\|_{L^2}
+\|v_{t_0}\|_{L^2}
\le C(E_0).
\end{equation}
Since $\|\eta(t_0)\|_{H^2}+\|\psi_x(t_0)\|_2$ is uniformly bounded, Proposition~\ref{prop:local} provides a lifespan $\tau_*=\tau_*(\gamma,E_0)>0$ independent of $t_0$. Remark~\ref{rem5.3} places the restarted and original solutions in a common affine class, so uniqueness identifies them on their overlap. If $T_{\max}<\infty$, choose $t_0<T_{\max}$ with $t_0+\tau_*>T_{\max}$. The solution restarted at $t_0$ then extends beyond $T_{\max}$, contradicting maximality. Hence $T_{\max}=\infty$.

The backgrounds chosen at consecutive continuation steps differ by an $H^1$ function. Since the local existence time has a uniform positive lower bound, the solution extends
across every finite time interval after finitely many such steps. Consequently, on every finite time interval, the global solution belongs to the class \eqref{1.12} relative to
the fixed background $f=\mathbb{P}\psi_0$. Summing the energy identities over successive continuation intervals yields \eqref{1.14} for all $0\le s\le t<\infty$; combined with
Lemma~\ref{lem:coercivity}, this implies \eqref{1.15}. Successive application of local uniqueness on these intervals establishes global uniqueness within the class \eqref{1.12}.

Finally, let $\mathcal B\subset\Ecal$ satisfy $\sup_{U_0\in\mathcal B}\|U_0\|_{\Ecal}\le M$. By Lemma~\ref{lem:coercivity}, there exists $E_*=E_*(M)$ such that $E(U_0)\le E_*$
for every $U_0\in\mathcal B$. The energy identity and the coercivity estimate provide
uniform bounds on the solution norms required by the local theory. Consequently, the local existence time can be chosen as $\tau_*=\tau_*(\gamma,E_*)>0$ at every continuation step, with local Lipschitz constants depending only on $\gamma$ and $E_*$. For $T>0$, choose $0<\tau\le\tau_*$, let $L\ge1$ be a common local Lipschitz constant in $\Ecal$, and set
\[
J=\lceil T/\tau\rceil,\qquad
t_j=\min\{j\tau,T\},\qquad 0\le j\le J.
\]
For two solutions, define
\[
d_j=\|\eta_1(t_j)-\eta_2(t_j)\|_{H^2}
+\|\psi_1(t_j)-\psi_2(t_j)\|_{\Pcal}.
\]
Applying the local stability estimate at each restart gives
\[
\sup_{t_j\le t\le t_{j+1}}
\bigl(
\|\eta_1(t)-\eta_2(t)\|_{H^2}
+\|\psi_1(t)-\psi_2(t)\|_{\Pcal}
\bigr)
\le Ld_j.
\]
Since $d_{j+1}\le Ld_j$, iteration bounds the supremum by $L^Jd_0$. Hence
\begin{align}\label{6.6}
&\sup_{0\le t\le T}\bigl(
\|\eta_1(t)-\eta_2(t)\|_{H^2}
+\|\psi_1(t)-\psi_2(t)\|_{\Pcal}
\bigr)\notag\\
&\quad\le C_{\gamma,M,T}\bigl(
\|\eta_{0,1}-\eta_{0,2}\|_{H^2}
+\|\psi_{0,1}-\psi_{0,2}\|_{\Pcal}
\bigr).
\end{align}
Thus the solution map is Lipschitz on every finite time interval.
\end{proof}

\begin{remark}
The natural phase space admits potentials outside $L^2$. The linear estimate \eqref{5.1} depends on structural damping, and its constants need not remain bounded as $\gamma\downarrow0$. The argument therefore gives no undamped limit theorem.

The global energy identity yields
\[
\int_0^\infty\|\partial_x\eta_t(t)\|_2^2\,\mathrm{d}t
\le\frac{E_0}{\gamma},
\qquad
E(t)\downarrow E_\infty\ge0.
\]
The identity alone does not establish $E_\infty=0$ or an algebraic decay rate. Since $u(t)$ may lie outside $L^2$, Theorem~\ref{thm:main} also does not yield decay estimates requiring a uniform $L^2$ bound on $u$.
\end{remark}

\appendix
\section{Compatibility under changes of background}\label{app:tools}
\begin{lemma}[Compatibility of affine changes]\label{lem:affine-tools}
Let $f_1,f_2\in\Pcal$ be fixed backgrounds with $f_1-f_2\in H^1$. For $\eta\in H^2$ and $v_1\in L^2$, define
\[
\zeta_1=R_\eta(v_1-G(\eta)f_1),\qquad
\delta=f_1-f_2,\qquad
v_2=v_1+\delta,\qquad
\zeta_2=\zeta_1+\delta.
\]
Then
\[
\zeta_2=R_\eta(v_2-G(\eta)f_2),
\]
and both reconstructions yield the same $u$, $\psi$, and $h$; consequently, the reduced formulations associated with $f_1$ and $f_2$ are equivalent, and the change of background preserves the regularity class \eqref{1.12} on every finite time interval.
\end{lemma}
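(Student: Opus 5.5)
The identity is essentially algebraic once we check that every term lives in the right space. The key facts are that $\delta=f_1-f_2\in H^1$ lies in the operator domain of $G(\eta)$, and that the homogeneous extension of Lemma~\ref{lem:homogeneous} is linear and agrees with the $H^1$ operator on $H^1$.

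\emph{Step 1: linearity on $\Pcal$.} By Lemma~\ref{lem:cutoff-tools}, $H^1\hookrightarrow\Pcal$, so $f_2=f_1-\delta\in\Pcal$ and $G(\eta)f_2$ is defined. Linearity of the unique bounded extension $G(\eta):\Pcal\to L^2$ gives
\[
G(\eta)f_1=G(\eta)f_2+G(\eta)\delta .
\]
Here $G(\eta)\delta$ is the $L^2$ operator value from Lemma~\ref{lem:elliptic-input}. This holds because the extension was built from approximations converging in $\Pcal$, and for $H^1$ data such approximations converge in $H^1$ as well. Both values are then limits of the same Cauchy sequence, by \eqref{3.1}.

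\emph{Step 2: the resolvent identity.} Since $\delta\in H^1$, we have $R_\eta(\Id+G(\eta))\delta=\delta$. Using Step 1,
\[
R_\eta\bigl(v_2-G(\eta)f_2\bigr)
=R_\eta\bigl(v_1-G(\eta)f_1\bigr)+R_\eta\bigl((\Id+G(\eta))\delta\bigr)
=\zeta_1+\delta=\zeta_2 .
\]
The arguments of $R_\eta$ lie in $L^2$ because $v_2=v_1+\delta\in L^2$.

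\emph{Step 3: same physical quantities.} The physical quantities agree directly: $u_2=f_2+v_2=f_1+v_1=u_1$ and $\psi_2=f_2+\zeta_2=\psi_1$. Then $h_2=v_2-\zeta_2=v_1-\zeta_1=h_1$, and by \eqref{4.2} this equals $G(\eta)\psi$ in both charts.

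\emph{Step 4: equivalence of reduced systems and regularity.} The reduced system \eqref{4.7} in terms of $(\eta,v_j,\zeta_j)$ is obtained from \eqref{1.8} by substituting $u=f_j+v_j$ with $f_j$ time independent. Since $u$, $\psi$, $h$ and $\Ncal(\eta,\psi)$ coincide, and $\partial_t v_2=\partial_t v_1$ in $\mathcal D'$ because $\delta$ is time independent, each chart's system holds if and only if the other does. For the regularity class \eqref{1.12}, note that $\delta\in H^1$ is constant in time, so on a finite interval $[0,T]$ it belongs to $C_TL^2\cap L^2_TH^1$ and to $C_TH^1$. Adding it to $v_1$ and $\zeta_1$ therefore preserves the classes, while $\eta$ and $h$ are unchanged. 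The representatives are also consistent: choosing the representative of $f_2$ as $f_1-\delta$, with $\delta$ the $H^1$ function, is exactly the convention stated in Section~\ref{sec:introduction}.

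The only step requiring care is Step 1, namely that the $\Pcal$-extension of $G(\eta)$ restricted to $H^1$ coincides with the operator of Lemma~\ref{lem:elliptic-input}. The cleanest justification is uniqueness of the variational characterization $\int q\varphi=\Gform_\eta(\psi,\varphi)$ from the proof of Lemma~\ref{lem:homogeneous}, which both values satisfy.
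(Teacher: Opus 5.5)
Your proposal is correct and follows the paper's proof: the paper likewise checks $(\Id+G(\eta))\zeta_2=v_1-G(\eta)f_1+\delta+G(\eta)\delta=v_2-G(\eta)f_2$ with all terms in $L^2$ and applies $R_\eta$. It then reads off $f_2+v_2=f_1+v_1$, $f_2+\zeta_2=f_1+\zeta_1$, $v_2-\zeta_2=v_1-\zeta_1$, and uses the time independence of $\delta\in H^1$ for the reduced equations and for the class \eqref{1.12}.

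Your Step~1 makes explicit the consistency of the $\Pcal$-extension with the $H^1$ operator, which the paper uses tacitly. Its justification should be phrased differently. You claim that the approximants converge in $H^1$, but this is false for general $\Pcal$-approximants, since $\Pcal$ convergence does not control the $L^2$ norm. It is also unnecessary. Apply the upper bound in \eqref{3.1} to $\psi_n-\psi\in H^1$: it involves only $\|(\psi_n-\psi)_x\|_{L^2}\to0$, so the identification holds for every approximating sequence.
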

\begin{proof}
Write $G=G(\eta)$. All terms in the following identity belong to $L^2$:
\[
(\Id+G)\zeta_2
=v_1-Gf_1+\delta+G\delta
=v_2-Gf_2.
\]
Applying $R_\eta$ gives the reconstruction formula, while the definitions imply
\[
f_2+v_2=f_1+v_1,\qquad
f_2+\zeta_2=f_1+\zeta_1,\qquad
v_2-\zeta_2=v_1-\zeta_1.
\]
Hence $u$, $\psi$, and $h$, together with the first reduced equation, are unchanged; because $\delta$ is time independent, both sides of the second equation acquire $-\gamma\delta_{xx}$, so that equation also holds in distributions.
On every finite interval, $\delta\in H^1$ belongs to $C_T H^1\cap L_T^2H^1$, which preserves \eqref{1.12} under the change of background. For the low frequency backgrounds used in the construction,
$\mathbb{P} f_1=f_1$. At a restart, choosing $f_2=\mathbb{P}\psi(t_0)$ therefore yields
\[
f_2-f_1=\mathbb{P}\zeta_1(t_0)\in H^1.
\]
\end{proof}

\section{Elliptic theory on an \texorpdfstring{$H^2$}{H2} graph}\label{app:elliptic}
\noindent

This appendix establishes the fixed-graph estimates and shape differentiability used in Lemma~\ref{lem:elliptic-input} in one horizontal dimension. The proofs combine Fourier and variational estimates with a conormal-flux trace bound. Related results appear in Sections~3.1 and~3.3 and Appendices~C and~D of~\cite{ASY}; see also the treatment of the Dirichlet-Neumann operator in~\cite{Lannes}.

Write $\Pi=\R\times(-\infty,0)$, $J=(1-\partial_x^2)^{1/2}$, and
\[
\|F\|_{\mathcal Y^s}^2
=\int_{-\infty}^0
\|F(\cdot,z)\|_{H^s(\R)}^2\,\mathrm{d}z.
\]
For vector and matrix-valued functions, the squared norm is the sum of the squared component norms. Constants denoted by $C_M$ may vary from line to line and depend only on the common bound $M$ for the $H^2$ norms of the graphs under consideration.

\begin{lemma}[Commutator estimates]\label{lem:B-comm}
Let $a\in H^1(\R)$. For $0<s<1/2$ and $g\in L^2(\R)$,
\begin{equation}\label{B1}
\|[J^s,a]g\|_{L^2}
\le C_s\|a\|_{H^1}\|g\|_{L^2}.
\end{equation}
For $s=1/2$ and $g\in H^{3/8}$,
\begin{equation}\label{B2}
\|[J^{1/2},a]g\|_2
\le C\|a\|_{H^1}\|g\|_{H^{3/8}}.
\end{equation}
Consequently, for $g\in H^{1/2}$,
\begin{equation}\label{B3}
\|ag\|_{H^{1/2}}
\le C\|a\|_{H^1}\|g\|_{H^{1/2}}.
\end{equation}
The commutator estimates remain valid with $J^s$ replaced by $T_{s,N}=J^s\chi(D/N)$, with constants independent of $N\ge1$. Here $\chi$ is a fixed real-valued, even function in $C_c^\infty(\R)$ that equals one near zero.
\end{lemma}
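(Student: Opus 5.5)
I would work on the Fourier side with a Coifman--Meyer/paraproduct-type decomposition, exploiting the one-dimensional embedding $H^1\hookrightarrow L^\infty$ and the fact that $\widehat a\in L^2(\langle\xi\rangle^2\,\mathrm d\xi)$, so that $\langle\xi\rangle^{\theta}\widehat a\in L^1$ for every $\theta<1/2$ by Cauchy--Schwarz. Writing
\[
\widehat{[J^s,a]g}(\xi)=(2\pi)^{-1/2}\int_\R\bigl(\langle\xi\rangle^s-\langle\zeta\rangle^s\bigr)\widehat a(\xi-\zeta)\widehat g(\zeta)\,\mathrm d\zeta,
\]
the key pointwise bound is $|\langle\xi\rangle^s-\langle\zeta\rangle^s|\le C_s\langle\xi-\zeta\rangle^{s}$ for $0<s\le1$. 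This holds because $t\mapsto t^s$ is subadditive and $\langle\cdot\rangle$ is subadditive up to a constant. The symbol difference is thereby transferred entirely onto the frequency of $a$.

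For \eqref{B1} with $0<s<1/2$, I would dominate $|\widehat{[J^s,a]g}|$ by $C_s(\langle\cdot\rangle^s|\widehat a|)*|\widehat g|$ and apply Young's inequality $L^1*L^2\to L^2$. The required bound is
\[
\|\langle\cdot\rangle^s\widehat a\|_{L^1}\le\|\langle\cdot\rangle^{s-1}\|_{L^2}\|a\|_{H^1},
\]
which is finite exactly because $2(1-s)>1$, i.e.\ $s<1/2$.

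For \eqref{B2} with $s=1/2$, the weight $\langle\cdot\rangle^{-1/2}$ fails to be square integrable only logarithmically, so I would borrow $3/8$ derivative from $g$. I would use the refined bound
\[
|\langle\xi\rangle^{1/2}-\langle\zeta\rangle^{1/2}|\le C\langle\xi-\zeta\rangle^{1/2-\epsilon}\langle\zeta\rangle^{\epsilon}
\]
for small $\epsilon$, for instance $\epsilon=3/8$. This follows by splitting into two regions. In the region $|\xi-\zeta|\ge|\zeta|/2$ it is immediate from the subadditivity bound. In the region $|\xi-\zeta|<|\zeta|/2$, the mean value theorem gives $|\xi-\zeta|\langle\zeta\rangle^{-1/2}\lesssim\langle\xi-\zeta\rangle^{1/2-\epsilon}\langle\zeta\rangle^{\epsilon}$, since $|\xi-\zeta|^{1/2+\epsilon}\lesssim\langle\zeta\rangle^{1/2+\epsilon}$. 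Then Young's inequality gives the bound $\|\langle\cdot\rangle^{1/8}\widehat a\|_{L^1}\|g\|_{H^{3/8}}\le C\|a\|_{H^1}\|g\|_{H^{3/8}}$.

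For \eqref{B3} I would write $J^{1/2}(ag)=aJ^{1/2}g+[J^{1/2},a]g$. The first term is bounded in $L^2$ by $\|a\|_{L^\infty}\|g\|_{H^{1/2}}\le C\|a\|_{H^1}\|g\|_{H^{1/2}}$ using Lemma~\ref{lem:product-tools}. The second term is handled by \eqref{B2} together with $\|g\|_{H^{3/8}}\le\|g\|_{H^{1/2}}$.

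For the truncated operators $T_{s,N}=J^s\chi(D/N)$, the same argument applies once the uniform symbol estimate
\[
|\langle\xi\rangle^s\chi(\xi/N)-\langle\zeta\rangle^s\chi(\zeta/N)|\le C\langle\xi-\zeta\rangle^{1/2-\epsilon}\langle\zeta\rangle^{\epsilon}
\]
is established with $C$ independent of $N\ge1$ (and the analogue with $\langle\xi-\zeta\rangle^s$ for $s<1/2$). I expect this step to be the main obstacle. I would split $\langle\xi\rangle^s(\chi(\xi/N)-\chi(\zeta/N))+(\langle\xi\rangle^s-\langle\zeta\rangle^s)\chi(\zeta/N)$. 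The second piece is covered by the previous bounds since $|\chi|\le C$. For the first piece, the mean value theorem gives $|\chi(\xi/N)-\chi(\zeta/N)|\le C\min\{1,|\xi-\zeta|/N\}$, and this difference vanishes unless $\max(|\xi|,|\zeta|)\gtrsim N$. I would then check two cases. If $|\xi|\lesssim N$, then $\langle\xi\rangle^s|\xi-\zeta|/N\le\langle\xi-\zeta\rangle^{s}(|\xi-\zeta|/N)^{1-s}\langle\xi\rangle^s N^{-s}\cdot$const, which is controlled when $|\xi-\zeta|\lesssim N$; if instead $|\xi-\zeta|\gtrsim N$, the bound $1$ suffices because $\langle\xi\rangle^s\lesssim\langle\xi-\zeta\rangle^s$ there. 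If $|\xi|\gg N$ with $|\zeta|\approx|\xi|$, both cutoffs vanish. The remaining configurations force $|\xi-\zeta|\gtrsim|\xi|$, and then $\langle\xi\rangle^s\lesssim\langle\xi-\zeta\rangle^s$. In every case the constant is independent of $N$.
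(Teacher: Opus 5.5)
Your arguments for \eqref{B1}, for deducing \eqref{B3} from \eqref{B2}, and for the truncated symbols with $s<1/2$ are sound. The proof of \eqref{B2}, however, rests on a false pointwise inequality.

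The refined bound $|\langle\xi\rangle^{1/2}-\langle\zeta\rangle^{1/2}|\le C\langle\xi-\zeta\rangle^{1/2-\epsilon}\langle\zeta\rangle^{\epsilon}$ already fails at $\zeta=0$. There the left side is $\langle\xi\rangle^{1/2}-1\sim|\xi|^{1/2}$, while the right side is $C\langle\xi\rangle^{1/8}$ when $\epsilon=3/8$. Your mean value argument in the region $|\xi-\zeta|<|\zeta|/2$ is correct. The trouble is the region $|\xi-\zeta|\ge|\zeta|/2$, which you called ``immediate''. There subadditivity only gives $|\xi-\zeta|^{1/2}$, and trading $|\xi-\zeta|^{\epsilon}$ for $\langle\zeta\rangle^{\epsilon}$ would require $|\xi-\zeta|\lesssim\langle\zeta\rangle$. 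That is the opposite of what holds when the frequency of $a$ dominates.

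This is structural, not cosmetic. In that high--low interaction the commutator behaves like $(J^{1/2}a)g$, so the full half derivative must fall on $a$. But $|\cdot|^{1/2}\widehat a$ need not lie in $L^1$ for $a\in H^1$, since the weight $|\xi|^{1/2}\langle\xi\rangle^{-1}$ misses $L^2$ logarithmically. Likewise $\widehat g$ need not lie in $L^1$ for $g\in H^{3/8}$. Hence no Young estimate with one factor in $L^1$ can handle this region. The same counterexample defeats your uniform refined bound for $T_{1/2,N}$: take $\zeta=0$ and $\xi=cN$, where $\chi=1$ on $[-c,c]$. So \eqref{B2}, its truncated version, and therefore \eqref{B3} remain unproved.

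The missing idea is to spend the $3/8$ derivative of $g$ on improving the integrability of $\widehat g$, rather than on absorbing symbol weight. Keep the plain bound $|\langle\xi\rangle^{1/2}-\langle\zeta\rangle^{1/2}|\le|\xi-\zeta|^{1/2}$ and apply Young's inequality $L^{4/3}*L^{4/3}\to L^2$. H\"older then gives
$\||\cdot|^{1/2}\widehat a\|_{L^{4/3}}\le\||\xi|^{1/2}\langle\xi\rangle^{-1}\|_{L^4}\|a\|_{H^1}$ and $\|\widehat g\|_{L^{4/3}}\le\|\langle\xi\rangle^{-3/8}\|_{L^4}\|g\|_{H^{3/8}}$, and both weights lie in $L^4$. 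This is exactly the paper's proof. You could equally use it only in the region $|\xi-\zeta|\ge|\zeta|/2$ and keep your $L^1*L^2$ argument in the complementary region.

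For $T_{1/2,N}$, write $m_N(\xi)=\langle\xi\rangle^{1/2}\chi(\xi/N)$. Your case analysis works verbatim at $s=1/2$ and gives $|m_N(\xi)-m_N(\zeta)|\le C\langle\xi-\zeta\rangle^{1/2}$ uniformly in $N\ge1$; it never produced the refined weight anyway. Since $\langle\xi\rangle^{-1/2}\in L^4$, this inhomogeneous weight also fits the $L^{4/3}$ argument, so the truncated estimate follows. The paper reaches the uniform H\"older bound $|m_N(\xi)-m_N(\zeta)|\le C|\xi-\zeta|^{s}$ more directly. It uses $|m_N|\le C\langle\xi\rangle^{s}$ and $|m_N'|\le C\langle\xi\rangle^{s-1}$, splitting according to whether $|\xi-\zeta|\le\langle\zeta\rangle/2$.
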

\begin{proof}
For $0<s<1$, the symbol $m(\xi)=\langle\xi\rangle^s$ satisfies
\[
|m(\xi)-m(\zeta)|\le C_s|\xi-\zeta|^s.
\]
The Fourier representation of the commutator therefore yields
\[
\bigl|\widehat{[J^s,a]g}\bigr|
\le C_s\bigl(|\cdot|^s|\widehat a|\bigr)*|\widehat g|.
\]
For $0<s<1/2$, it follows from the Cauchy-Schwarz inequality that
\[
\||\xi|^s\widehat a\|_{L^1_\xi}
\le
\left(\int_{\R}\frac{|\xi|^{2s}}{1+\xi^2}
\,\mathrm{d}\xi\right)^{1/2}\|a\|_{H^1}
\le C_s\|a\|_{H^1}.
\]
Young's convolution inequality $L^1*L^2\to L^2$, followed by Plancherel's theorem, therefore yields \eqref{B1}.

For $s=1/2$, use Young's convolution inequality $L^{4/3}*L^{4/3}\to L^2$ instead. It follows from H\"older's inequality with $3/4=1/2+1/4$ that
\begin{align*}
\||\xi|^{1/2}\widehat a\|_{4/3}
&\le
\||\xi|^{1/2}\langle\xi\rangle^{-1}\|_4
\|a\|_{H^1},\\
\|\widehat g\|_{4/3}
&\le
\|\langle\xi\rangle^{-3/8}\|_4
\|g\|_{H^{3/8}}.
\end{align*}
Both weight norms are finite; combined with Plancherel's theorem, these estimates prove \eqref{B2}. For smooth $a$ and $g$, we combine
\[
J^{1/2}(ag)=aJ^{1/2}g+[J^{1/2},a]g
\]
with \eqref{B2} to obtain
\[
\|ag\|_{H^{1/2}}
\le \|a\|_\infty\|g\|_{H^{1/2}}
+C\|a\|_{H^1}\|g\|_{H^{3/8}}
\le C\|a\|_{H^1}\|g\|_{H^{1/2}},
\]
where we have used $H^1(\R)\hookrightarrow L^\infty(\R)$ and
$H^{1/2}(\R)\hookrightarrow H^{3/8}(\R)$ in the last inequality. A density argument then extends this estimate to all $a\in H^1(\R)$ and $g\in H^{1/2}(\R)$, proving \eqref{B3}.

For $0<s<1$, the truncated symbol $m_N(\xi)=\langle\xi\rangle^s\chi(\xi/N)$ satisfies
\[
|m_N(\xi)|\le C\langle\xi\rangle^s,\qquad
|m_N'(\xi)|\le C\langle\xi\rangle^{s-1},
\]
with constants independent of $N\ge1$. If $|\xi-\zeta|\le\langle\zeta\rangle/2$, the mean value theorem gives
\[
|m_N(\xi)-m_N(\zeta)|
\le C|\xi-\zeta|\langle\zeta\rangle^{s-1}
\le C|\xi-\zeta|^s.
\]
Otherwise, $\langle\zeta\rangle<2|\xi-\zeta|$ and $\langle\xi\rangle\le3|\xi-\zeta|$, so the size estimate gives the same bound; the preceding convolution arguments therefore apply uniformly in $N$. Density then extends them from smooth functions to the stated Sobolev spaces.
\end{proof}

\begin{lemma}[Normal trace of a divergence-free flux]\label{lem:B-flux}
Suppose that $Q=(Q_x,Q_z)\in\mathcal Y^{1/2}$ satisfies
\[
\partial_xQ_x+\partial_zQ_z=0
\]
in the sense of distributions in $\Pi$. Then $Q_z$ has a strong $L^2$ trace at $z=0$, satisfying
\begin{equation}\label{B4}
\|Q_z(0)\|_{L^2}\le C\|Q\|_{\mathcal Y^{1/2}}.
\end{equation}
More generally, if $q\in\mathcal Y^{1/2}$ and its distributional derivative $\partial_zq$ belongs to $\mathcal Y^{-1/2}$, then $q$ admits a representative in
$C((-\infty,0];L^2)$, and
\begin{equation}\label{B5}
\sup_{z\le0}\|q(z)\|_{L^2}^2
\le2\|q\|_{\mathcal Y^{1/2}}
\|\partial_zq\|_{\mathcal Y^{-1/2}}.
\end{equation}
\end{lemma}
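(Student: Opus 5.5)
The plan is to prove the general estimate \eqref{B5} first and then deduce \eqref{B4} from it. The argument is the standard Lions--Magenes interpolation trace, carried out in the $z$ variable with the Gelfand triple $H^{1/2}\subset L^2\subset H^{-1/2}$ in $x$.

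\emph{Step 1: the estimate \eqref{B5} for smooth $q$.} Take $q$ smooth with compact support in $(-\infty,0]$, with values in Schwartz functions of $x$ (no vanishing condition at $z=0$). For every $z\le0$ the fundamental theorem of calculus gives
\[
\|q(z)\|_{L^2}^2=\int_{-\infty}^z 2\,\mathrm{Re}\,\ip{\partial_zq(s)}{q(s)}_{H^{-1/2},H^{1/2}}\,\mathrm ds,
\]
because the $L^2$ pairing equals the $H^{-1/2}$--$H^{1/2}$ duality pairing $\int\widehat{\partial_z q}\,\overline{\widehat q}\,\mathrm d\xi$, split with the weights $\langle\xi\rangle^{\mp1/2}$. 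Cauchy--Schwarz in $s$ then yields $\|q(z)\|_2^2\le2\|q\|_{\mathcal Y^{1/2}}\|\partial_zq\|_{\mathcal Y^{-1/2}}$, uniformly in $z\le0$.

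\emph{Step 2: density.} Now extend to general $q\in\mathcal Y^{1/2}$ with $\partial_zq\in\mathcal Y^{-1/2}$.
\begin{enumerate}[(i)]
\item Cut off at large depth with $\theta(z/n)$. The error term $n^{-1}\theta'(z/n)q$ tends to zero in $\mathcal Y^{-1/2}$, indeed already in $L^2$, since it is bounded by $n^{-1}\|q\|_{\mathcal Y^{0}}$.
\item Mollify in $z$. To avoid trouble at the boundary, first translate downward: $q(\cdot,z-\epsilon)$ is defined on $z\le0$ and is smooth up to $z=0$ after convolution.
\item Mollify in $x$ by the multipliers $\chi(D/N)$.
\end{enumerate}
All three operations converge in both norms. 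Applying Step 1 to differences of approximants, together with the bilinear form of the estimate (apply it to $q_n-q_m$), shows that the approximants form a Cauchy sequence in $C((-\infty,0];L^2)$. Their limit is a continuous representative of $q$ satisfying \eqref{B5}.

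\emph{Step 3: deducing \eqref{B4}.} Set $q=Q_z$. The divergence-free condition gives $\partial_zQ_z=-\partial_xQ_x$ in distributions. Since $\partial_x$ maps $H^{1/2}\to H^{-1/2}$ with norm at most one, we get $\|\partial_zQ_z\|_{\mathcal Y^{-1/2}}\le\|Q_x\|_{\mathcal Y^{1/2}}$. Step 2 then supplies the strong trace, with
\[
\|Q_z(0)\|_2^2\le 2\|Q_z\|_{\mathcal Y^{1/2}}\|Q_x\|_{\mathcal Y^{1/2}}\le\|Q\|_{\mathcal Y^{1/2}}^2 .
\]
This is \eqref{B4} with $C=1$.

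The main obstacle is the density step near the boundary $z=0$. The approximation must be one-sided in $z$, via the downward translation, so that the values at $z=0$ are genuinely approximated. Beyond that, I must check that the distributional derivative $\partial_z q$ is the one on the open half-plane, so that no boundary term is lost; the translation handles this because it maps $\Pi$ into itself. The remaining steps are routine Fourier bookkeeping.
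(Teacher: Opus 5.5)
Your proposal is correct and follows essentially the same route as the paper. The shared steps are the fundamental-theorem-of-calculus identity with the $H^{-1/2}$--$H^{1/2}$ pairing for smooth $q$, and a one-sided (downward-shifted) mollification in $z$ combined with a tangential frequency cutoff, which yields approximants that are Cauchy in $C((-\infty,0];L^2)$. The final step, $\partial_zQ_z=-\partial_xQ_x$ to deduce \eqref{B4}, is also the same. The differences are cosmetic: the paper avoids your depth cutoff by choosing $z_n\to-\infty$ with $\|q(z_n)\|_{L^2}\to0$, and it uses the sharp projection $\mathbb{P}_n$ instead of $\chi(D/N)$. The paper also does not state your explicit constant $C=1$, which is valid under its convention that vector norms square-sum over components.
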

\begin{proof}
For smooth $q$ satisfying the stated assumptions, $q\in L^2_zL^2_x$ permits a sequence $z_n\to-\infty$ such that $\|q(z_n)\|_2\to0$. Integrating
\[
\frac{\mathrm{d}}{\mathrm{d}z}\|q(z)\|_{L^2}^2
=2\operatorname{Re}
\langle\partial_zq(z),q(z)\rangle_{H^{-1/2},H^{1/2}}
\]
from $z_n$ to $z$, letting $n\to\infty$, and applying Cauchy-Schwarz gives \eqref{B5}. For general $q$, choose a nonnegative $\rho\in C_c^\infty((1,2))$ with $\int\rho=1$, and let $\mathbb{P}_n$ be the tangential Fourier projection onto $|\xi|\le n$. Set
\[
q_n(\cdot,z)=\int_1^2\rho(r)\mathbb{P}_nq(\cdot,z-r/n)\,\mathrm dr,
\qquad z\le0.
\]
This formula samples only points strictly inside $\Pi$ and defines a smooth $L^2$-valued function up to $z=0$. Differentiation commutes with this interior convolution in the distributional sense, so
\[
\partial_zq_n(\cdot,z)
=\int_1^2\rho(r)\mathbb{P}_n(\partial_zq)(\cdot,z-r/n)\,\mathrm dr.
\]
Strong continuity of translations in $L^2_zH^s_x$, applied after zero extension only to verify norm convergence, and strong convergence of $\mathbb{P}_n$ give
\[
q_n\to q\quad\text{in }\mathcal Y^{1/2},
\qquad
\partial_zq_n\to\partial_zq
\quad\text{in }\mathcal Y^{-1/2}.
\]
Applying the smooth estimate to $q_n-q_m$ shows that $(q_n)$ is uniformly Cauchy on $(-\infty,0]$ with values in $L^2$; its continuous limit agrees with $q$ almost everywhere and provides the claimed representative and trace, while passage to the limit preserves \eqref{B5}. The divergence-free condition further gives
$\partial_zQ_z=-\partial_xQ_x$, and hence
\[
\|\partial_zQ_z\|_{\mathcal Y^{-1/2}}
\le\|Q_x\|_{\mathcal Y^{1/2}}.
\]
Thus \eqref{B5}, applied to $Q_z$, yields \eqref{B4}.
\end{proof}

Let
\[
\mathscr C(\overline\Pi)
=\{\varphi|_\Pi:\varphi\in C_c^\infty(\R^2)\},
\]
and define $\dot{\mathcal V}(\Pi)$ as the completion of $\mathscr C(\overline\Pi)$ under the norm $\|\nabla w\|_{L^2(\Pi)}$. The Poincar\'e inequality on bounded half-rectangles realizes its elements as locally $H^1$ functions up to the boundary, modulo
additive constants. The trace construction below gives a continuous map
\[
\operatorname{Tr}:
\dot{\mathcal V}(\Pi)\longrightarrow\dot H^{1/2}(\R).
\]
We set
\[
\mathcal V_0
=\ker(\operatorname{Tr}),
\]
which is a closed subspace of $\dot{\mathcal V}(\Pi)$. For elements of $\mathcal V_0$, the additive constant is fixed by requiring the local boundary trace to vanish.

For $\eta\in H^2$, set $s=\eta_x$ and
\begin{equation}\label{B6}
A_\eta=
\begin{pmatrix}
1 & -s\\
-s & 1+s^2
\end{pmatrix}.
\end{equation}
\begin{lemma}[Variational construction and tangential regularity]
\label{lem:B-shift}
Let $A=A_\eta$, where $\|\eta\|_{H^2}\le M$. For every $F\in L^2(\Pi;\R^2)$, there exists a unique $w\in\mathcal V_0$ satisfying
\begin{equation}\label{B7}
\int_\Pi A\nabla w\cdot\nabla\theta
\,\mathrm{d}x\,\mathrm{d}z
=\int_\Pi F\cdot\nabla\theta
\,\mathrm{d}x\,\mathrm{d}z,
\qquad \theta\in\mathcal V_0.
\end{equation}
Moreover,
\[
\|\nabla w\|_{L^2(\Pi)}
\le C_M\|F\|_{L^2(\Pi)}.
\]
If $F\in\mathcal Y^{1/2}$, then
\begin{equation}\label{B8}
\|\nabla w\|_{\mathcal Y^{1/2}}
\le C_M\|F\|_{\mathcal Y^{1/2}}.
\end{equation}

For every $\psi\in\dot H^{1/2}$, there exists a unique variational harmonic extension
$v\in\dot{\mathcal V}(\Pi)$ with trace $\psi$, understood modulo constants, satisfying
$\operatorname{div}(A\nabla v)=0$. If $\psi\in H^1$, then
\begin{equation}\label{B9}
\|\nabla v\|_{\mathcal Y^{1/2}}
\le C_M\|\psi\|_{H^1}.
\end{equation}
\end{lemma}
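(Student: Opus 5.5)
The first two assertions are handled by Lax--Milgram on $\mathcal V_0$. The matrix $A=A_\eta$ has determinant one and trace $2+s^2$. Since $\|s\|_{L^\infty}\le C\|\eta\|_{H^2}\le CM$ by Lemma~\ref{lem:product-tools}, the form $(w,\theta)\mapsto\int_\Pi A\nabla w\cdot\nabla\theta$ is bounded and coercive on $\mathcal V_0$ with respect to $\|\nabla w\|_{L^2(\Pi)}$, with constants $(2+C M^2)^{\pm1}$. The right-hand side $\theta\mapsto\int F\cdot\nabla\theta$ is a bounded functional. This gives existence, uniqueness and $\|\nabla w\|_{L^2}\le C_M\|F\|_{L^2}$.

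For the harmonic extension of $\psi\in\dot H^{1/2}$, I will write $v=P\psi+a$ with $P\psi=e^{z\Lambda}\psi$. Then $a\in\mathcal V_0$ solves \eqref{B7} with $F=-A\nabla P\psi\in L^2(\Pi)$. Uniqueness modulo constants follows because the difference of two extensions lies in $\mathcal V_0$ and solves the homogeneous problem. This repeats the construction in Lemma~\ref{lem:form}.

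The main step is the tangential regularity \eqref{B8}. I will use the difference-quotient-free method with the truncated operators $T_N=T_{1/2,N}=J^{1/2}\chi(D/N)$ from Lemma~\ref{lem:B-comm}. These act only in $x$, so they preserve $\mathcal V_0$ and commute with $\nabla$. Set $w_N=T_Nw$, which lies in $\mathcal V_0$ with finite energy since $T_N$ is bounded on $L^2$ for fixed $N$. Apply $T_N$ to the equation, i.e.\ test \eqref{B7} with $T_N\theta$ and use the self-adjointness of $T_N$. Then $w_N$ solves
\[
\int_\Pi A\nabla w_N\cdot\nabla\theta=\int_\Pi\bigl(T_NF-[T_N,A]\nabla w\bigr)\cdot\nabla\theta .
\]
The entries of $A-I$ are $-s$ and $s^2$, and both lie in $H^1$ with norm $\le C_M$, since $s\in H^1\cap L^\infty$. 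By \eqref{B2} applied slice-wise in $z$, together with interpolation,
\[
\|[T_N,A]\nabla w\|_{L^2}\le C_M\|\nabla w\|_{\mathcal Y^{3/8}}\le \epsilon\|\nabla w\|_{\mathcal Y^{1/2}}+C_{\epsilon,M}\|\nabla w\|_{L^2}.
\]
The energy estimate then gives
\[
\|\nabla w_N\|_{L^2}\le C_M\bigl(\|F\|_{\mathcal Y^{1/2}}+\epsilon\|\nabla w\|_{\mathcal Y^{1/2}}+C_\epsilon\|F\|_{L^2}\bigr).
\]

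The obstacle is that the right-hand side contains the untruncated norm $\|\nabla w\|_{\mathcal Y^{1/2}}$, which is not yet known to be finite. To avoid this, I will estimate the commutator by $C_M\|\nabla w\|_{L^2}^{1-\theta}\|\chi(D/N)$-type pieces. Alternatively, and more simply, I will use \eqref{B1} with $s<1/2$ first, which needs only $\nabla w\in L^2$. This bootstraps $\nabla w\in\mathcal Y^{s}$ for $s\in(3/8,1/2)$ uniformly in $N$. I will then repeat the argument with \eqref{B2}, which now costs only the $\mathcal Y^{3/8}$ norm that is already bounded. Letting $N\to\infty$ by monotone convergence gives \eqref{B8}, with no absorption needed.

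Finally, \eqref{B9} follows by applying \eqref{B8} to $a$ with $F=-A\nabla P\psi$ and adding $P\psi$. The flat extension satisfies
\[
\|\nabla P\psi\|_{\mathcal Y^{1/2}}^2=\int\langle\xi\rangle|\xi|\,|\widehat\psi|^2\,\mathrm d\xi\le C\|\psi\|_{H^1}^2 .
\]
The product $A\nabla P\psi$ is bounded in $\mathcal Y^{1/2}$ by \eqref{B3} applied slice-wise, since $A-I$ has $H^1$ entries with norm $\le C_M$.
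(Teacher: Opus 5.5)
Your proof is correct and, once you set aside the absorption attempt, it follows the paper's route. Both use Lax--Milgram on $\mathcal V_0$, the decomposition $v=P\psi+w$ with $F=-A\nabla P\psi$, and the two-step tangential bootstrap: a subcritical exponent (the paper takes $s=3/8$) is handled by \eqref{B1} using only the energy bound, and then $s=1/2$ is handled by \eqref{B2} using the resulting $\mathcal Y^{3/8}$ bound. Two small points: the limit $N\to\infty$ should be taken by Fatou's lemma, as the paper does, because $|\chi(\xi/N)|$ need not increase with $N$; and the paper proves by density, rather than asserting, that tangential multipliers commute with $\nabla$ and the trace and hence preserve $\mathcal V_0$.
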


\begin{proof}
\emph{Trace and zero-trace density.} For $w\in\mathscr C(\overline\Pi)$, Fourier transformation in $x$ and integration in $z$ give
\[
|\xi|\,|\widehat w(\xi,0)|^2
\le
\int_{-\infty}^0
\left(
|\partial_z\widehat w(\xi,z)|^2
+\xi^2|\widehat w(\xi,z)|^2
\right)\,\mathrm dz.
\]
Integrating in $\xi$ yields
\[
\|\operatorname{Tr}w\|_{\dot H^{1/2}}^2
\le \|\nabla w\|_{L^2(\Pi)}^2.
\]
Consequently, the trace extends continuously to $\dot{\mathcal V}(\Pi)$.
To identify this trace with the local boundary trace, fix a bounded half-rectangle $K_0$ adjacent to $z=0$. For an energy Cauchy sequence $(w_n)$, set $c_n=|K_0|^{-1}\int_{K_0}w_n$. On every bounded half rectangle $K$ containing $K_0$, the Poincar\'e inequality with the mean fixed on $K_0$ gives
\[
\|(w_n-c_n)-(w_m-c_m)\|_{H^1(K)}
\le C_K\|\nabla(w_n-w_m)\|_{L^2(K)}.
\]
Thus $w_n-c_n$ has compatible local $H^1$ limits, and the local trace theorem gives compatible boundary traces. Their pairings with compactly supported smooth test functions of zero integral agree with the homogeneous trace: such test functions belong to $\dot H^{-1/2}$, and constants disappear from the pairing. Consequently, the two traces differ only by one additive constant. If the homogeneous trace vanishes, the local boundary trace is a constant; subtracting that constant gives the unique representative with zero local boundary trace. For $\psi\in\dot H^{1/2}$, choose $\psi_n$ with
$\widehat\psi_n\in C_c^\infty(\R\setminus\{0\})$ such that $\psi_n\to\psi$ in $\dot H^{1/2}$. The Poisson extensions $P\psi_n=e^{z\Lambda}\psi_n$ belong to
$H^1(\Pi)$ and hence to $\dot{\mathcal V}(\Pi)$. The Fourier identity below shows that they are Cauchy in the energy norm; their limit defines $P\psi$, with
$\operatorname{Tr}(P\psi)=\psi$. Moreover,
\begin{equation}\label{B10}
\begin{aligned}
\|\nabla P\psi\|_{L^2(\Pi)}^2
&=\|\psi\|_{\dot H^{1/2}}^2,
&&\psi\in\dot H^{1/2},\\
\|\nabla P\psi\|_{\mathcal Y^{1/2}}^2
&=\int_\R|\xi|\langle\xi\rangle
|\widehat\psi(\xi)|^2\,\mathrm d\xi
\le\|\psi\|_{H^1}^2,
&&\psi\in H^1.
\end{aligned}
\end{equation}
Thus $P$ is a bounded right inverse of the homogeneous trace map, satisfying
$\operatorname{Tr}(P\psi)=\psi$ in $\dot H^{1/2}$, with potentials understood modulo constants.

Let $w\in\mathcal V_0$ and choose its representative with zero local boundary trace. For almost every $x$, the function $W(t)=w(x,-t)$ is locally absolutely continuous on $[0,\infty)$, satisfies $W(0)=0$, and has derivative in $L^2(0,\infty)$.
By the Cauchy-Schwarz inequality,
\[
\frac{|W(t)|^2}{t}
\le\int_0^t|W'(r)|^2\,\mathrm dr\longrightarrow0
\quad\text{as }t\downarrow0.
\]
For $0<\delta<R$, set
\[
I_{\delta,R}
=\int_\delta^R\frac{|W(t)|^2}{t^2}\,\mathrm dt.
\]
Integration by parts and the Cauchy-Schwarz inequality give
\begin{align*}
I_{\delta,R}
&=\frac{|W(\delta)|^2}{\delta}
  -\frac{|W(R)|^2}{R}
  +2\operatorname{Re}\int_\delta^R
       \frac{W'(t)\overline{W(t)}}{t}\,\mathrm dt\\
&\le \frac{|W(\delta)|^2}{\delta}
  +2\|W'\|_{L^2(0,R)}I_{\delta,R}^{1/2}.
\end{align*}
This inequality bounds $I_{\delta,R}$ uniformly as $\delta\downarrow0$. Since $|W(\delta)|^2/\delta\to0$ as $\delta\downarrow0$, the monotone convergence theorem yields
\[
\int_0^R\frac{|W(t)|^2}{t^2}\,
\mathrm dt
\le 4\int_0^R|W'(t)|^2
\mathrm dt.
\]
Letting $R\to\infty$ and integrating in $x$, we obtain
\[
\int_\Pi\frac{|w(x,z)|^2}{z^2}\,\mathrm dx\,\mathrm dz
\le4\|\partial_z w\|_{L^2(\Pi)}^2.
\]
Choose $\chi\in C^\infty([0,\infty))$ with
$0\le\chi\le1$, $\chi=0$ on $[0,1]$, and $\chi=1$ on $[2,\infty)$.
Set
\[
w_\varepsilon(x,z)=\chi(-z/\varepsilon)w(x,z).
\]
Since the derivative of the cutoff is supported where
$\varepsilon<-z<2\varepsilon$, we have
\[
\|\nabla(w_\varepsilon-w)\|_{L^2(\Pi)}^2
\le C\int_{\R\times(-2\varepsilon,0)}
\left(
|\nabla w|^2+\frac{|w|^2}{z^2}
\right)\,\mathrm dx\,\mathrm dz
\longrightarrow0
\]
as $\varepsilon\downarrow0$.

Next, choose $\rho\in C_c^\infty(\R^2)$ with $0\le\rho\le1$, $\rho=1$ on $B_1$, and
$\operatorname{supp}\rho\subset B_2$, where $B_R$ denotes the ball of radius $R$ centered at the origin. Set $\rho_R(x,z)=\rho(x/R,z/R)$. For fixed $\varepsilon>0$, the bound $|z|\le2R$ on the support of $\nabla\rho_R$ implies
\[
\begin{aligned}
\|\nabla(\rho_Rw_\varepsilon-w_\varepsilon)\|_{L^2(\Pi)}^2
\le C\int_{\Pi\setminus B_R}
\left(
|\nabla w_\varepsilon|^2
+\frac{|w_\varepsilon|^2}{z^2}
\right)\,\mathrm dx\,\mathrm dz
\longrightarrow0
\end{aligned}
\]
as $R\to\infty$. The function $\rho_Rw_\varepsilon$ belongs to $H^1(\Pi)$, has compact support, and vanishes in a neighborhood of the boundary. Extending it by zero and mollifying at a sufficiently small scale therefore gives approximations in $C_c^\infty(\Pi)$. A diagonal choice of $\varepsilon$, $R$, and the mollification scale proves that $C_c^\infty(\Pi)$ is dense in $\mathcal V_0$ in the energy norm.

Finally, a bounded tangential Fourier multiplier $T$ with symbol $m$ satisfies
\[
\|\nabla Tw\|_{L^2(\Pi)}
\le\|m\|_{L^\infty}\|\nabla w\|_{L^2(\Pi)}.
\]
Since $T$ is a tangential Fourier multiplier, it commutes with spatial differentiation and boundary restriction on smooth functions. By density, the boundedness of $T$ in the energy norm and on $\dot H^{1/2}(\R)$, together with continuity of the trace map, yields
\[
\nabla(Tw)=T\nabla w
\quad\text{in }L^2(\Pi;\R^2),
\qquad
\operatorname{Tr}(Tw)=T\operatorname{Tr}w
\quad\text{in }\dot H^{1/2}(\R),
\]
for every $w\in\dot{\mathcal V}(\Pi)$. Here $T$ acts componentwise on vector fields, and the trace identity is understood modulo additive constants. In particular, $T$ preserves $\mathcal V_0$.

\emph{Variational construction.}
The matrix $A$ is uniformly elliptic:
\[
\lambda_M I\le A\le\lambda_M^{-1}I,
\qquad
\|A-I\|_{H^1}\le C_M,
\qquad \lambda_M>0.
\]
Since $A$ is symmetric, the identities $\det A=1$ and
$\operatorname{tr}A=2+s^2$ imply
\[
(2+\|s\|_{L^\infty}^2)^{-1}|\xi|^2
\le \xi^{\mathsf T}A(x)\xi
\le (2+\|s\|_{L^\infty}^2)|\xi|^2,
\]
for every $\xi\in\R^2$ and almost every $x\in\R$. The embedding $H^1(\R)\hookrightarrow L^\infty(\R)$ and the bound $\|s\|_{H^1}\le M$ therefore give uniform ellipticity with a constant $\lambda_M>0$. Moreover, it follows from the algebra property of $H^1(\R)$
\[
\|A-I\|_{H^1}
\le C\bigl(\|s\|_{H^1}+\|s\|_{H^1}^2\bigr)
\le C_M.
\]
Equipped with the norm $\|\nabla\cdot\|_{L^2(\Pi)}$, the space $\mathcal V_0$ is a Hilbert space on which the bilinear form in \eqref{B7} is bounded and coercive. Since its right-hand side defines a bounded linear functional on $\mathcal V_0$, the Lax-Milgram theorem provides a unique $w\in\mathcal V_0$ satisfying \eqref{B7}. Taking $\theta=w$ we have
\[
\lambda_M\|\nabla w\|_{L^2(\Pi)}^2
\le \int_\Pi A\nabla w\cdot\nabla w
=\int_\Pi F\cdot\nabla w
\le \|F\|_{L^2(\Pi)}\|\nabla w\|_{L^2(\Pi)},
\]
and hence
\[
\|\nabla w\|_{L^2(\Pi)}
\le\lambda_M^{-1}\|F\|_{L^2(\Pi)}.
\]
For $\psi\in\dot H^{1/2}$, the Poisson extension satisfies $\nabla P\psi\in L^2(\Pi)$.
Thus $F=-A\nabla P\psi$ belongs to $L^2(\Pi;\R^2)$, and the preceding construction applies. Setting $v=P\psi+w$, we obtain $\operatorname{Tr}v=\psi$ and
\[
\int_\Pi A\nabla v\cdot\nabla\theta=0,
\qquad \theta\in\mathcal V_0.
\]
In particular, $\operatorname{div}(A\nabla v)=0$ in the sense of distributions.
If $v_1$ and $v_2$ are two variational solutions in $\dot{\mathcal V}(\Pi)$ with trace $\psi$, their difference belongs to $\mathcal V_0$. Testing the difference equation with $v_1-v_2$ and using coercivity yields $\nabla(v_1-v_2)=0$, proving uniqueness modulo additive constants.

\emph{Tangential regularity.}
Let $T=T_{s,N}$ be the truncated multiplier from the preceding commutator lemma. Since $T$ is self-adjoint and preserves $\mathcal V_0$, we may test \eqref{B7} with $T^2w$. This yields
\[
\int_\Pi A\nabla Tw\cdot\nabla Tw
=
\int_\Pi TF\cdot\nabla Tw
-\int_\Pi[T,A]\nabla w\cdot\nabla Tw,
\]
and thus
\begin{equation}\label{B11}
\lambda_M\|\nabla Tw\|_{L^2(\Pi)}
\le
\|TF\|_{L^2(\Pi)}
+\|[T,A]\nabla w\|_{L^2(\Pi)}.
\end{equation}

First take $s=3/8$: integrating the uniform subcritical commutator estimate \eqref{B1} in $z$ bounds the last term by $C_M\|\nabla w\|_{L^2(\Pi)}$, and Fatou's lemma together with the energy estimate then implies, as $N\to\infty$,
\[
\|\nabla w\|_{\mathcal Y^{3/8}}
\le C_M\|F\|_{\mathcal Y^{3/8}}.
\]
Next set $s=1/2$ in \eqref{B11}. By \eqref{B2}, the commutator term is bounded by
$C_M\|\nabla w\|_{\mathcal Y^{3/8}}$, which is controlled by the preceding estimate.
Letting $N\to\infty$ and applying Fatou's lemma then proves \eqref{B8}.

Finally, for $\psi\in H^1$, it follows from \eqref{B3} and \eqref{B10} that
\[
\|A\nabla P\psi\|_{\mathcal Y^{1/2}}
\le C_M\|\psi\|_{H^1}.
\]
Applying \eqref{B8} to the zero-trace correction $w$ and incorporating the Poisson extension therefore proves \eqref{B9}.
\end{proof}

\begin{proposition}\label{prop:B-DN}
The variational definition
\begin{equation}\label{B12}
\langle G(\eta)\psi,\varphi\rangle
=\int_\Pi A_\eta\nabla v_\psi\cdot\nabla v_\varphi
\end{equation}
defines a bounded map $\dot H^{1/2}\to\dot H^{-1/2}$. For $\psi\in H^1$, it is the strong $L^2$ trace $(A_\eta\nabla v_\psi)_z|_{z=0}$, with
\begin{equation}\label{B13}
\|G(\eta)\psi\|_{L^2}\le C_M\|\psi\|_{H^1}.
\end{equation}
If $\|\eta_j\|_{H^2}\le M$, then
\begin{equation}\label{B14}
\|(G(\eta_1)-G(\eta_2))\psi\|_{L^2}
\le C_M\|\eta_1-\eta_2\|_{H^2}\|\psi\|_{H^1}.
\end{equation}
The Dirichlet form is comparable to $\|\psi\|_{\dot H^{1/2}}^2$.
\end{proposition}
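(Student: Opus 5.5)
The plan is to read everything off the flattened variational problem of Lemma~\ref{lem:B-shift} and the flux trace bound of Lemma~\ref{lem:B-flux}.

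\emph{Boundedness $\dot H^{1/2}\to\dot H^{-1/2}$.} Let $v_\psi=P\psi+w$ be the variational extension. The energy estimate gives $\|\nabla v_\psi\|_{L^2(\Pi)}\le C_M\|\psi\|_{\dot H^{1/2}}$. Cauchy--Schwarz and the bounds $\lambda_M\le A_\eta\le\lambda_M^{-1}$ in \eqref{B12} then give $|\langle G(\eta)\psi,\varphi\rangle|\le C_M\|\psi\|_{\dot H^{1/2}}\|\varphi\|_{\dot H^{1/2}}$. For the comparability of the Dirichlet form with $\|\psi\|_{\dot H^{1/2}}^2$, I would use two facts. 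First, the trace inequality $\|\operatorname{Tr}w\|_{\dot H^{1/2}}\le\|\nabla w\|_{L^2}$ together with ellipticity gives the lower bound. Second, taking $P\psi$ as a competitor in the minimization gives the upper bound, using \eqref{B10}. This repeats the argument of Lemma~\ref{lem:form}.

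\emph{Strong $L^2$ trace for $\psi\in H^1$.} Set $Q=A_\eta\nabla v_\psi$. By \eqref{B9} and the product estimate \eqref{B3} (with $A_\eta-I\in H^1$ and $A_\eta$ bounded), we get $Q\in\mathcal Y^{1/2}$ with $\|Q\|_{\mathcal Y^{1/2}}\le C_M\|\psi\|_{H^1}$. Since $\operatorname{div}Q=0$ distributionally, Lemma~\ref{lem:B-flux} gives a strong trace $Q_z(0)\in L^2$ bounded as in \eqref{B13}.

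It remains to identify this trace with the variational $G(\eta)\psi$. For $\varphi$ with $\widehat\varphi\in C_c^\infty(\R\setminus\{0\})$, I would use the fact that $v_\varphi-P\varphi\in\mathcal V_0$ and that $v_\psi$ satisfies the weak equation on $\mathcal V_0$. This lets me replace $v_\varphi$ by $P\varphi$ in \eqref{B12}. I would then apply the divergence theorem on $\Pi$ to $Q\,P\varphi$, justified via the $C((-\infty,0];L^2)$ representative of $Q_z$ and truncation in $z$. This yields
\[
\int_\Pi Q\cdot\nabla P\varphi=\int_\R Q_z(0)\varphi\,\mathrm dx.
\]
Density of such $\varphi$ in $\dot H^{1/2}$ completes the identification.

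\emph{Graph difference \eqref{B14}.} Let $v_j$ denote the extensions of the same $\psi$ for the graphs $\eta_j$, and set $w=v_1-v_2\in\mathcal V_0$. Then
\[
\int_\Pi A_1\nabla w\cdot\nabla\theta=-\int_\Pi (A_1-A_2)\nabla v_2\cdot\nabla\theta .
\]
So $w$ solves \eqref{B7} with $F=-(A_1-A_2)\nabla v_2$. By \eqref{B3} and \eqref{B9},
\[
\|F\|_{\mathcal Y^{1/2}}\le C\|A_1-A_2\|_{H^1}\|\nabla v_2\|_{\mathcal Y^{1/2}}\le C_M\|\eta_1-\eta_2\|_{H^2}\|\psi\|_{H^1},
\]
and \eqref{B8} then controls $\nabla w$ in $\mathcal Y^{1/2}$. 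The flux difference is
\[
Q_1-Q_2=A_1\nabla w+(A_1-A_2)\nabla v_2 .
\]
It is divergence free, and it lies in $\mathcal Y^{1/2}$ with the same bound. Applying \eqref{B4} then gives \eqref{B14}.

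\emph{Main obstacle.} The delicate step is the integration by parts that identifies the variational pairing with the strong trace. Because $\psi$ may be only homogeneous there, constants must be handled carefully, and the boundary term at $z\to-\infty$ must vanish. Both are controlled by $P\varphi$ having Fourier support away from zero and by the $L^2_z$ integrability of $Q$.
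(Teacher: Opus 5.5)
Your proposal is correct and follows the paper's proof essentially step for step. You obtain comparability of the form from ellipticity and the Poisson competitor, get $Q\in\mathcal Y^{1/2}$ from \eqref{B9} and \eqref{B3} with the trace supplied by Lemma~\ref{lem:B-flux}, identify the trace by integrating by parts after replacing $v_\varphi$ by an extension differing from it by an element of $\mathcal V_0$, and prove \eqref{B14} by applying \eqref{B8} to $v_1-v_2$ and then \eqref{B4}. The one cosmetic difference is that you test with the Poisson extension $P\varphi$ of Fourier-localized $\varphi$, removing the boundary term at $z\to-\infty$ by exponential decay and \eqref{B5}, whereas the paper uses a compactly supported extension $\Phi$ of $\phi\in C_c^\infty(\R)$.
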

\begin{proof}
The variational construction and the Cauchy-Schwarz inequality establish boundedness of the form \eqref{B12}, while uniform ellipticity and the minimizing property of $v_\psi$, with the flat Poisson extension as a competitor, yield
\[
\lambda_M\|\psi\|_{\dot H^{1/2}}^2
\le \int_\Pi A_\eta\nabla v_\psi\cdot\nabla v_\psi
\le \lambda_M^{-1}\|\psi\|_{\dot H^{1/2}}^2.
\]
For $\psi\in H^1$, Lemma~\ref{lem:B-shift} gives $\nabla v_\psi\in\mathcal Y^{1/2}$.
Since the entries of $A_\eta-I$ belong to $H^1$, the product estimate \eqref{B3} gives $Q=A_\eta\nabla v_\psi\in\mathcal Y^{1/2}$; as the variational equation gives $\operatorname{div}Q=0$, Lemma~\ref{lem:B-flux} supplies the strong $L^2$ normal trace $Q_z(0)$ satisfying \eqref{B13}.

For $\phi\in C_c^\infty(\R)$, choose a smooth extension $\Phi$ with compact support in $\overline\Pi$ and trace $\phi$. Since $\operatorname{div}Q=0$, integration by parts on $\{z<-\varepsilon\}$, followed by $\varepsilon\downarrow0$, gives
\[
\int_\Pi Q\cdot\nabla\Phi
=\int_\R Q_z(x,0)\phi(x)\,\mathrm dx.
\]
The boundary limit follows from the strong $L^2$ trace of $Q_z$. Moreover, $\Phi-v_\phi\in\mathcal V_0$, so the variational equation for $v_\psi$ yields
\[
\int_\Pi Q\cdot\nabla\Phi
=\int_\Pi Q\cdot\nabla v_\phi
=\langle G(\eta)\psi,\phi\rangle.
\]
Thus $Q_z(0)$ represents $G(\eta)\psi$ in $L^2$. By density, this identity holds for every $\phi\in H^1$.

To prove continuity with respect to the graph, let $A_j=A_{\eta_j}$, where $\|\eta_j\|_{H^2}\le M$, and let $v_j$ be the variational harmonic extensions of the same trace $\psi\in H^1$. Then $d=v_1-v_2$ has zero trace and satisfies
\[
\operatorname{div}(A_1\nabla d)
=-\operatorname{div}\bigl((A_1-A_2)\nabla v_2\bigr).
\]
By \eqref{B6} and the algebra property of $H^1$,
\[
\|A_1-A_2\|_{H^1}
\le C_M\|\eta_1-\eta_2\|_{H^2}.
\]
It follows from Lemma~\ref{lem:B-comm} and Lemma~\ref{lem:B-shift} that
\[
\|\nabla d\|_{\mathcal Y^{1/2}}
\le C_M\|\eta_1-\eta_2\|_{H^2}\|\psi\|_{H^1}.
\]
Writing $A_1=I+(A_1-I)$ and applying the same product estimate gives
\[
\|Q_1-Q_2\|_{\mathcal Y^{1/2}}
\le C_M\|\eta_1-\eta_2\|_{H^2}\|\psi\|_{H^1},
\qquad
Q_1-Q_2=A_1\nabla d+(A_1-A_2)\nabla v_2.
\]
Because this flux difference is divergence-free with normal trace $(G(\eta_1)-G(\eta_2))\psi$, the trace estimate \eqref{B4} proves \eqref{B14} without a shape-derivative formula.
\end{proof}

\begin{proposition}\label{prop:B-Rellich}
Let $\eta\in H^2$ with $\|\eta\|_{H^2}\le M$. For every $\psi\in H^1$,
\begin{equation}\label{B15}
C_M^{-1}\|\psi_x\|_{L^2}
\le \|G(\eta)\psi\|_{L^2}
\le C_M\|\psi_x\|_{L^2},
\end{equation}
where the constants depend only on $\|\eta_x\|_\infty$. With $a=\psi_x$, $h=G(\eta)\psi$, and $s=\eta_x$, the following identity holds:
\begin{equation}\label{B16}
\int_\R\frac{h^2-a^2+2sah}{1+s^2}\dd=0.
\end{equation}
\end{proposition}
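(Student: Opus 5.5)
The plan is to derive the Rellich identity \eqref{B16} from a divergence identity for the flattened variational solution, and then read off both bounds in \eqref{B15} from it by pointwise algebra.

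\emph{Step 1: the identity for smooth data.} Work first with $\eta$ smooth and $\psi$ Schwartz, where the harmonic extension $\Psi$ in $\Omega_\eta$ is smooth up to the boundary and its gradient decays. Since $\Psi$ is harmonic, the vector field
\[
\bigl(2\Psi_x\Psi_y,\ \Psi_y^2-\Psi_x^2\bigr)
\]
is divergence free. Integrate this divergence over $\Omega_\eta$ and use the boundary flux with the unnormalized normal $(-\eta_x,1)$ against $\mathrm dx$. This gives
\[
\int_\R\bigl(\Psi_y^2-\Psi_x^2-2\eta_x\Psi_x\Psi_y\bigr)\big|_{y=\eta}\dd=0 .
\]
On the boundary, Proposition~\ref{prop:boundary} expresses the traces as $\Psi_x=V=a-s\beta$ and $\Psi_y=\beta=(h+sa)/(1+s^2)$. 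Substituting these and multiplying out should reduce the integrand to $(h^2-a^2+2sah)/(1+s^2)$, which is \eqref{B16}; this is a routine but necessary algebra check. In the flattened variables the same computation is the divergence of the corresponding quadratic expression in $\nabla v$. The decay at $y\to-\infty$ follows from $\nabla\Psi\in L^2$ together with smoothness, using a truncation in $z$ along a sequence $z_n\to-\infty$, as in the proof of Lemma~\ref{lem:B-flux}.

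\emph{Step 2: the bounds.} From \eqref{B16}, Cauchy--Schwarz with weight $(1+s^2)^{-1}\in[(1+\|s\|_\infty^2)^{-1},1]$ gives
\[
\int\frac{h^2}{1+s^2}\le\int\frac{a^2}{1+s^2}+2\|s\|_\infty\Bigl(\int\frac{h^2}{1+s^2}\Bigr)^{1/2}\Bigl(\int\frac{a^2}{1+s^2}\Bigr)^{1/2}.
\]
Writing $X^2=\int h^2/(1+s^2)$ and $Y^2=\int a^2/(1+s^2)$, this reads $X^2-2\|s\|_\infty XY-Y^2\le0$, so $X\le C(\|s\|_\infty)Y$. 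The symmetric inequality $Y^2-2\|s\|_\infty XY-X^2\le0$ gives $Y\le C(\|s\|_\infty)X$. Converting the weighted norms back to plain $L^2$ norms costs factors of $1+\|s\|_\infty^2$. This yields \eqref{B15} with constants depending only on $\|\eta_x\|_\infty\le C\|\eta\|_{H^2}$.

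\emph{Step 3: approximation, the main obstacle.} It remains to pass from smooth data to $\eta\in H^2$ and $\psi\in H^1$. Approximate by $\eta_n\to\eta$ in $H^2$ and $\psi_n\to\psi$ in $H^1$, both smooth. Then $a_n\to a$ in $L^2$ and $s_n\to s$ uniformly. The convergence $h_n=G(\eta_n)\psi_n\to h$ in $L^2$ follows from \eqref{B13} and \eqref{B14}, since
\[
G(\eta_n)\psi_n-G(\eta)\psi=G(\eta_n)(\psi_n-\psi)+(G(\eta_n)-G(\eta))\psi .
\]
The integrand of \eqref{B16} is a quadratic form in $(a,h)$ with bounded coefficients depending continuously on $s$, so the identity passes to the limit. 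The bounds in \eqref{B15} then pass to the limit as well, since their constants are uniform in $n$.

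The delicate point is Step 1 for smooth, decaying data: one must know that the classical harmonic extension coincides with the variational one, that it is smooth up to the graph, and that the flux at infinity vanishes. If classical regularity is inconvenient, I would instead run the whole argument in the flattened domain $\Pi$. There the vector field $Q=A_\eta\nabla v$ and the quadratic Rellich field both lie in $\mathcal Y^{1/2}$, so the boundary traces are available directly from Lemma~\ref{lem:B-flux}, and the tangential mollification from that proof justifies the integration by parts.
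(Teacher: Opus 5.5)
Your plan is the paper's proof. It uses the same Rellich field $2\Psi_y\nabla\Psi-|\nabla\Psi|^2e_y$, the same boundary algebra in $a,h,s$, the same passage to $\eta\in H^2$, $\psi\in H^1$ through \eqref{B13}--\eqref{B14}, and the same weighted Cauchy--Schwarz step for \eqref{B15}.

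The one step you assume rather than prove is the justification for smooth data. The paper closes it while assuming less than you do: it needs neither smoothness up to the graph nor pointwise decay of $\nabla\Psi$. Instead it proves $\nabla v\in H^1(\Pi)$ by tangential difference quotients, together with the equation $(1+s^2)v_{zz}=2sv_{xz}-v_{xx}+s_xv_z$. This gives $\nabla\Psi\in H^1(\Omega_\eta)$, $\mathscr R\in W^{1,1}$, and $V,\beta\in L^2(\R)$. It then applies the divergence theorem to $\vartheta_R\mathscr R$ with a radial cutoff, whose bulk error is at most $CR^{-1}\|\nabla\Psi\|_{L^2(\Omega_\eta)}^2$, and finishes by dominated convergence on the boundary. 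This handles $|x|\to\infty$ and $y\to-\infty$ at once. Your truncation along $z_n\to-\infty$ only disposes of the bottom flux. The lateral flux then rests entirely on the decay of $\nabla\Psi$ that you postulate.

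You should drop the fallback in your last paragraph. At the regularity available from Lemma~\ref{lem:B-shift}, the quadratic Rellich field is not in $\mathcal Y^{1/2}$: a product of two $L^2_zH^{1/2}_x$ functions is only $L^1$ in $z$, and $H^{1/2}(\R)$ is not an algebra. Checking that the field is divergence free also requires pairing $\nabla^2v$ with $\nabla v$, which is not defined at that regularity. Finally, Lemma~\ref{lem:B-flux} gives an $L^2$ bound for a normal trace, not the vanishing of its integral. So that route would not produce \eqref{B16} without first establishing the extra regularity the paper proves.
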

\begin{proof}
First assume that $\eta,\psi\in\mathcal S(\R)$ are real valued.
Write $A=A_\eta$ and let
\[
v=P\psi+w
\]
be the flattened variational harmonic extension, where
$w\in\mathcal V_0$ solves \eqref{B7} with
$F=-A\nabla P\psi$.

We first establish the regularity needed for the boundary
calculation. For $\delta\ne0$, define
\[
D_\delta g(x,z)
=\frac{g(x+\delta,z)-g(x,z)}{\delta}.
\]
Tangential translations preserve $\mathcal V_0$.
Subtracting the translated variational equations gives
\[
\int_\Pi A\nabla D_\delta w\cdot\nabla\theta
=
\int_\Pi
\left(
D_\delta F
-(D_\delta A)\nabla w(x+\delta,z)
\right)\cdot\nabla\theta,
\qquad \theta\in\mathcal V_0.
\]
Taking $\theta=D_\delta w$ and using ellipticity, we have
\[
\|\nabla D_\delta w\|_{L^2(\Pi)}
\le C_\eta\left(
\|\partial_xF\|_{L^2(\Pi)}
+\|\partial_xA\|_{L^\infty}
 \|\nabla w\|_{L^2(\Pi)}
\right).
\]
Here $C_\eta$ is finite for the fixed smooth graph.
The energy estimate and the Poisson extension identities imply
\[
\|\nabla w\|_{L^2(\Pi)}
+\|\partial_xF\|_{L^2(\Pi)}
+\|\nabla\partial_xP\psi\|_{L^2(\Pi)}
\le C_\eta\|\psi\|_{H^{3/2}}.
\]
Passing to the limit in the difference quotients therefore yields
\[
\|\partial_x\nabla v\|_{L^2(\Pi)}
\le C_\eta\|\psi\|_{H^{3/2}}.
\]
Since $\operatorname{div}(A\nabla v)=0$, we also have
\[
(1+s^2)v_{zz}
=2s\,v_{xz}-v_{xx}+s_xv_z,
\qquad s=\eta_x.
\]
By the boundedness of $s$ and $s_x$, it follows
\[
\|\nabla v\|_{H^1(\Pi)}
\le C_\eta\|\psi\|_{H^{3/2}}.
\]
For the Schwartz data considered here, the tangential difference-quotient estimate can be iterated: derivatives of the coefficients are bounded, and the displayed equation for $v_{zz}$ recovers successive normal derivatives. This yields smoothness of $v$ up to $z=0$ locally. For the Rellich calculation, the established bound $\nabla v\in H^1(\Pi)$ suffices. Indeed, with $\Phi_\eta(x,z)=(x,z+\eta(x))$ and $s=\eta_x$,
\[
(\nabla\Psi)\circ\Phi_\eta=(v_x-sv_z,v_z).
\]
Because $s\in W^{1,\infty}$ and $\Phi_\eta$ is bi-Lipschitz,
\[
\|\nabla\Psi\|_{H^1(\Omega_\eta)}
\le C(\|s\|_{W^{1,\infty}})\|\nabla v\|_{H^1(\Pi)}<\infty.
\]
The trace theorem therefore gives
$V=\Psi_x|_{y=\eta}$ and $\beta=\Psi_y|_{y=\eta}$ in $L^2(\mathbb R)$.

Define the Rellich vector field
\[
\mathscr R
=2\Psi_y\nabla\Psi-|\nabla\Psi|^2e_y.
\]
Since $\nabla\Psi\in H^1(\Omega_\eta)$, we have $\mathscr R\in W^{1,1}(\Omega_\eta;\R^2)$.
By harmonicity, it holds
\[
\operatorname{div}\mathscr R
=2\Psi_y\Delta\Psi=0.
\]
Choose $\vartheta\in C_c^\infty(\R^2)$ with $0\le\vartheta\le1$ and $\vartheta=1$ on $B_1$, and set $\vartheta_R(x,y)=\vartheta(x/R,y/R)$. Applying the divergence theorem to $\vartheta_R\mathscr R$, we obtain
\[
\int_\R
\vartheta_R(x,\eta(x))
\left(\beta^2-V^2-2s\beta V\right)\,\mathrm dx
=
\int_{\Omega_\eta}
\nabla\vartheta_R\cdot\mathscr R\,\mathrm dx\,\mathrm dy.
\]
Indeed, the boundary measure and outward normal satisfy
$n_\eta\,\mathrm dS=(-s,1)\,\mathrm dx$.
The right-hand side satisfies
\[
\left|
\int_{\Omega_\eta}
\nabla\vartheta_R\cdot\mathscr R\,\mathrm dx\,\mathrm dy
\right|
\le
\frac{C}{R}\|\nabla\Psi\|_{L^2(\Omega_\eta)}^2
\longrightarrow0.
\]
Moreover, $\beta^2-V^2-2s\beta V\in L^1(\R)$, due to $V,\beta\in L^2$ and $s\in L^\infty$.
It follows by the dominated convergence on the boundary
\[
\int_\R
\left(\beta^2-V^2-2s\beta V\right)\,\mathrm dx=0.
\]
Using
\[
a=V+s\beta,\qquad h=\beta-sV,
\]
we obtain
\[
\beta^2-V^2-2s\beta V
=
\frac{h^2-a^2+2sah}{1+s^2},
\]
which proves \eqref{B16} for smooth data.

For general real valued $\eta\in H^2$ and $\psi\in H^1$,
choose real valued Schwartz approximations
$\eta_n\to\eta$ in $H^2$ and $\psi_n\to\psi$ in $H^1$.
Set
\[
s_n=\partial_x\eta_n,\qquad
a_n=\partial_x\psi_n,\qquad
h_n=G(\eta_n)\psi_n.
\]
The graph norms are uniformly bounded, and
\eqref{B13}--\eqref{B14} give
\[
\|h_n-h\|_2
\le
C_M\|\psi_n-\psi\|_{H^1}
+C_M\|\eta_n-\eta\|_{H^2}\|\psi\|_{H^1}
\longrightarrow0.
\]
Also, $a_n\to a$ in $L^2$ and $s_n\to s$ in $L^\infty$.
It follows that
\[
\frac{h_n^2-a_n^2+2s_na_nh_n}{1+s_n^2}
\longrightarrow
\frac{h^2-a^2+2sah}{1+s^2}
\quad\text{in }L^1(\R).
\]
Strong convergence of the smooth approximations in $H^2\times H^1$ permits passage to the limit in the smooth identities, proving \eqref{B16} at the stated regularity. The estimates used here depend only on uniform $H^2$ bounds on the graphs and $H^1$ bounds on the traces.

Let
\[
q=\|s\|_\infty,\qquad
X^2=\int_\R\frac{h^2}{1+s^2}\,\mathrm dx,\qquad
Y^2=\int_\R\frac{a^2}{1+s^2}\,\mathrm dx.
\]
The identity and the Cauchy-Schwarz inequality yield
$|X^2-Y^2|\le 2qXY$, hence
\[
X\le\bigl(q+\sqrt{1+q^2}\bigr)Y,\qquad
Y\le\bigl(q+\sqrt{1+q^2}\bigr)X.
\]
Since
\[
(1+q^2)^{-1}\le(1+s^2)^{-1}\le1,
\]
the weighted norms are equivalent to the standard $L^2$ norms, with constants depending only on $q$, which proves \eqref{B15}.
\end{proof}

\begin{proposition}\label{prop:B-resolvent}
For every real valued $\eta\in H^2$, $\Id+G(\eta):H^1\to L^2$ is a bounded isomorphism. Its inverse $R_\eta=(\Id+G(\eta))^{-1}$ satisfies
\begin{equation}\label{B17}
\|R_\eta g\|_{H^1}\le C_M\|g\|_{L^2},\qquad
\|R_\eta g\|_{L^2}\le\|g\|_{L^2},
\end{equation}
for all $g\in L^2$, provided $\|\eta\|_{H^2}\le M$. As an operator on $L^2$, $G(\eta)$ is nonnegative and self-adjoint, with operator domain $H^1$ and form domain $H^{1/2}$. Moreover, for $\|\eta_j\|_{H^2}\le M$,
\begin{equation}\label{B18}
\|(R_{\eta_1}-R_{\eta_2})g\|_{H^1}
\le C_M\|\eta_1-\eta_2\|_{H^2}\|g\|_{L^2}.
\end{equation}
\end{proposition}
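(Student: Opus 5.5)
We will first identify $G(\eta)$ as the self-adjoint operator associated with the closed nonnegative form $\Gform_\eta$, and then build the resolvent from it. By Proposition~\ref{prop:B-DN} and Lemma~\ref{lem:form}, the form $\psi\mapsto\Gform_\eta(\psi,\psi)$ is comparable to $\|\psi\|_{\dot H^{1/2}}^2$. Hence $\Gform_\eta(\psi,\psi)+\|\psi\|_{L^2}^2\simeq\|\psi\|_{H^{1/2}}^2$ on $H^{1/2}$, so the form is closed, symmetric and nonnegative there. Symmetry comes from the symmetric bilinear definition \eqref{B12}. Kato's representation theorem then gives a nonnegative self-adjoint operator $\mathcal G$ with form domain $H^{1/2}$.

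\textbf{Domain identification.} For $\psi\in H^1$, Proposition~\ref{prop:B-DN} gives $G(\eta)\psi\in L^2$ representing the form: $\Gform_\eta(\psi,\varphi)=\int G(\eta)\psi\,\varphi$ for $\varphi\in H^1$, and by density in $H^{1/2}$ for all $\varphi\in H^{1/2}$. Thus $H^1\subset D(\mathcal G)$ and $\mathcal G=G(\eta)$ on $H^1$. The converse inclusion $D(\mathcal G)\subset H^1$ is the main obstacle. I would handle it by proving surjectivity of $\Id+G(\eta):H^1\to L^2$ directly, since $\Id+\mathcal G$ is injective on its domain. Once $(\Id+G(\eta))H^1=L^2$, any $\psi\in D(\mathcal G)$ satisfies $(\Id+\mathcal G)\psi=(\Id+G(\eta))\psi'$ for some $\psi'\in H^1$, so $\psi=\psi'\in H^1$.

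\textbf{A priori estimate and surjectivity.} For $\psi\in H^1$ and $g=(\Id+G)\psi$, pairing with $\psi$ and using nonnegativity gives $\|\psi\|_2\le\|g\|_2$. Also $\|G\psi\|_2\le\|g\|_2+\|\psi\|_2\le 2\|g\|_2$. The lower Rellich bound \eqref{B15} then yields $\|\psi_x\|_2\le C_M\|g\|_2$, which proves both parts of \eqref{B17} once surjectivity holds. For surjectivity I would use a continuity method in the graph: set $\eta_\theta=\theta\eta$ for $\theta\in[0,1]$, so that $\|\eta_\theta\|_{H^2}\le M$ uniformly. At $\theta=0$, $\Id+\Lambda:H^1\to L^2$ is an isomorphism by Fourier analysis. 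The uniform a priori bound together with the Lipschitz estimate \eqref{B14}, namely $\|(G(\eta_\theta)-G(\eta_{\theta'}))\psi\|_2\le C_M|\theta-\theta'|\|\eta\|_{H^2}\|\psi\|_{H^1}$, shows that the set of $\theta$ for which $\Id+G(\eta_\theta)$ is onto is open. It is closed by the uniform bound, so it is all of $[0,1]$; a Neumann series step of fixed size $\sim(C_M^2\|\eta\|_{H^2})^{-1}$ suffices.

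\textbf{Lipschitz dependence.} With surjectivity established, \eqref{B18} follows from the resolvent identity
\[
R_{\eta_1}-R_{\eta_2}=-R_{\eta_1}\bigl(G(\eta_1)-G(\eta_2)\bigr)R_{\eta_2},
\]
valid on $L^2$ because both operators map $H^1$ to $L^2$. Combining \eqref{B14} with \eqref{B17} gives
\[
\|(R_{\eta_1}-R_{\eta_2})g\|_{H^1}\le C_M\|(G(\eta_1)-G(\eta_2))R_{\eta_2}g\|_2\le C_M\|\eta_1-\eta_2\|_{H^2}\|g\|_2 .
\]
Self-adjointness of $G(\eta)$ on $D=H^1$ is then the equality $\mathcal G=G(\eta)$ obtained above.
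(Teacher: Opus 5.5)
Your proposal is correct and follows the paper's proof in all essentials. You use the same a priori bound, coming from nonnegativity of the form and the lower Rellich estimate \eqref{B15}, the same continuity method along $\theta\eta$ starting from $\Id+\Lambda$ and using \eqref{B14}, and the same resolvent identity to obtain \eqref{B18}.

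The differences are minor. You obtain self-adjointness and the form domain from Kato's representation theorem and then use surjectivity to force $D(\mathcal G)=H^1$, whereas the paper uses surjectivity to show $D(G^*)\subset H^1$ directly. Your uniform Neumann step also makes the paper's separate closedness argument unnecessary.
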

\begin{proof}
By nonnegativity of the Dirichlet form, for $g=\varphi+G(\eta)\varphi$ with $\varphi\in H^1$,
\[
\|\varphi\|_{L^2}^2\le\langle g,\varphi\rangle,
\qquad
\|\varphi\|_{L^2}\le\|g\|_{L^2}.
\]
Combining the lower Rellich estimate with $G(\eta)\varphi=g-\varphi$ gives $\|\varphi_x\|_2\le 2C_M\|g\|_2$ and hence
\[
\|\varphi\|_{H^1}
\le C_M\|(\Id+G(\eta))\varphi\|_{L^2},
\]
uniformly for $\|\eta\|_{H^2}\le M$, independently of surjectivity.

Set $T_\theta=\Id+G(\theta\eta)$ for $0\le\theta\le1$. Proposition~\ref{prop:B-DN} gives continuity of $\theta\mapsto T_\theta$ in $\mathcal L(H^1,L^2)$, and the preceding estimate holds uniformly in $\theta$. Fourier transformation shows that $T_0=\Id+|D|$ is an isomorphism, while the Neumann series shows that the set of invertible parameters is open. For closedness, let $\theta_n\to\theta_*$ with each $T_{\theta_n}$ invertible and, for $g\in L^2$, set $\varphi_n=T_{\theta_n}^{-1}g$. The uniform estimate implies
\[
\|\varphi_n-\varphi_m\|_{H^1}
\le C_M\|(T_{\theta_m}-T_{\theta_n})\varphi_m\|_2
\le C_M\|T_{\theta_m}-T_{\theta_n}\|_{H^1\to L^2}\|g\|_2.
\]
Thus $\varphi_n$ converges in $H^1$ to a function $\varphi$ satisfying $T_{\theta_*}\varphi=g$; the same a priori estimate gives injectivity, so $T_{\theta_*}$ is invertible. Since $[0,1]$ is connected, $T_1$ is invertible and satisfies \eqref{B17}.

Symmetry and nonnegativity of $G=G(\eta)$ follow from the Dirichlet form. To prove self-adjointness, let $h\in D(G^*)$, set $f=h+G^*h$, and take $w=R_\eta f\in H^1$. For every $\varphi\in H^1$, using the definition of the adjoint and symmetry, it follows
\[
\langle h,(\Id+G)\varphi\rangle
=\langle f,\varphi\rangle
=\langle w,(\Id+G)\varphi\rangle.
\]
As $\Id+G:H^1\to L^2$ is surjective, $h=w\in H^1$ and $G^*h=Gh$; combined with symmetry, this gives $D(G^*)=D(G)=H^1$. Proposition~\ref{prop:B-DN} further gives
\[
\|\varphi\|_2^2+\Gform_\eta(\varphi,\varphi)
\asymp_M \|\varphi\|_{H^{1/2}}^2.
\]
Because $H^1$ is dense in $H^{1/2}$, the closed quadratic form has domain $H^{1/2}$, which is therefore the form domain of $G$. Finally, with $G_j=G(\eta_j)$ and $R_j=R_{\eta_j}$, the resolvent identity
\[
R_1-R_2=R_1(G_2-G_1)R_2,
\]
combined with \eqref{B14} and \eqref{B17} yields \eqref{B18}.
\end{proof}

\begin{theorem}[Shape derivative at the required regularity]
\label{thm:B-shape}
The map $\eta\mapsto G(\eta)$ is continuously Fr\'echet differentiable from $H^2$ into $\mathcal L(H^1,L^2)$. Its derivative satisfies
\begin{equation}\label{B19}
\|DG(\eta)[k]\psi\|_2
\le C_M\|k\|_{H^2}\|\psi\|_{H^1}.
\end{equation}
The derivative is given distributionally by
\begin{equation}\label{B20}
DG(\eta)[k]\psi
=-G(\eta)(k\beta)-\partial_x(k\nu),\qquad
\beta=\frac{G(\eta)\psi+\eta_x\psi_x}{1+\eta_x^2},
\quad
\nu=\psi_x-\eta_x\beta.
\end{equation}
In the first term, $G(\eta)$ denotes its bounded extension from $L^2$ to $H^{-1}$ by duality; the two terms separately belong to $H^{-1}$, whereas their sum belongs to $L^2$ by \eqref{B19}.
\end{theorem}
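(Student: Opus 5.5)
The plan is to work entirely in the flattened strip $\Pi$ and realize $G(\eta)\psi$ as the normal trace of the divergence-free flux $Q_\eta=A_\eta\nabla v_\eta$, where $v_\eta=v_\eta[\psi]$ is the variational extension of Lemma~\ref{lem:B-shift}. For a real increment $k\in H^2$ we have the exact expansion
\[
A_{\eta+k}-A_\eta=A'[k]+A''[k],\qquad
A'[k]=\begin{pmatrix}0&-k_x\\-k_x&2\eta_xk_x\end{pmatrix},\qquad
A''[k]=\begin{pmatrix}0&0\\0&k_x^2\end{pmatrix}.
\]
By the algebra property of $H^1$, the entries satisfy $\|A'[k]\|_{H^1}\le C_M\|k\|_{H^2}$ and $\|A''[k]\|_{H^1}\le C\|k\|_{H^2}^2$. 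The candidate derivative of the extension is the zero-trace function $\dot v\in\mathcal V_0$ solving \eqref{B7} with $F=-A'[k]\nabla v_\eta$. We then set
\[
DG(\eta)[k]\psi:=\bigl(A_\eta\nabla\dot v+A'[k]\nabla v_\eta\bigr)_z\big|_{z=0}.
\]
The flux in parentheses is divergence free. By \eqref{B9}, the product estimate \eqref{B3} and \eqref{B8}, it lies in $\mathcal Y^{1/2}$ with norm at most $C_M\|k\|_{H^2}\|\psi\|_{H^1}$. Lemma~\ref{lem:B-flux} then gives a strong $L^2$ trace and the bound \eqref{B19}, and linearity in $k$ and $\psi$ is clear.

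For Fr\'echet differentiability, write $\widetilde v=v_{\eta+k}$ and $r=\widetilde v-v_\eta-\dot v\in\mathcal V_0$. A direct computation shows that $r$ solves
\[
\operatorname{div}(A_\eta\nabla r)=-\operatorname{div}\bigl((A_{\eta+k}-A_\eta)\nabla(\widetilde v-v_\eta)+A''[k]\nabla v_\eta\bigr).
\]
The proof of \eqref{B14} gives $\|\nabla(\widetilde v-v_\eta)\|_{\mathcal Y^{1/2}}\le C_M\|k\|_{H^2}\|\psi\|_{H^1}$. Together with \eqref{B3} and \eqref{B8}, this places $\nabla r$ and the remainder flux
\[
(A_{\eta+k}-A_\eta)\nabla(\widetilde v-v_\eta)+A''[k]\nabla v_\eta+A_\eta\nabla r
\]
in $\mathcal Y^{1/2}$, with norm $C_M\|k\|_{H^2}^2\|\psi\|_{H^1}$. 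This remainder flux is exactly $\widetilde Q-Q_\eta-(\text{derivative flux})$, so it is divergence free, and \eqref{B4} yields the quadratic remainder estimate in $\mathcal L(H^1,L^2)$; this is the estimate later quoted as \eqref{B22}. Continuity of $\eta\mapsto DG(\eta)$ in operator norm follows in the same way. One subtracts the equations for $\dot v$ at two graphs and uses $\|A'_{\eta_1}[k]-A'_{\eta_2}[k]\|_{H^1}\le C\|\eta_1-\eta_2\|_{H^2}\|k\|_{H^2}$ together with the graph-Lipschitz bound of Proposition~\ref{prop:B-DN}; the flux-trace lemma again converts $\mathcal Y^{1/2}$ bounds into $L^2$ bounds.

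The distributional formula \eqref{B20} I would first establish for real Schwartz $\eta,k,\psi$, where $v$ is smooth up to the boundary (Proposition~\ref{prop:B-Rellich}). In physical variables $\Psi(x,y)=v(x,y-\eta)$, the variation $\Psi'=\dot v-k\,v_z$, transported back, is harmonic in $\Omega_\eta$ with boundary value $-k\beta$, since $v_z(x,0)=\beta$; hence its normal flux is $G(\eta)(-k\beta)$. Differentiating $(\Psi_y-\eta_x\Psi_x)|_{y=\eta}$ produces, besides this flux, the terms
\[
k(\Psi_{yy}-\eta_x\Psi_{xy})-k_xV .
\]
Using $\Psi_{yy}=-\Psi_{xx}$ and $\partial_xV=\Psi_{xx}+\eta_x\Psi_{xy}$ on the boundary, these collapse to $-\partial_x(kV)$, with $V=\nu$. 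For general data I would pass to the limit in the pairing with $\varphi\in C_c^\infty(\R)$. The left side converges by the continuity just proved, using \eqref{B14} and \eqref{B19} for the approximating data. For the right side, $k\beta$ and $k\nu$ converge in $L^2$ by \eqref{B13}--\eqref{B14}; since $G(\eta_n)\varphi\to G(\eta)\varphi$ in $L^2$, the pairings $\langle k_n\beta_n,G(\eta_n)\varphi\rangle$ converge, which is how the $H^{-1}$ extension of $G$ enters. I expect the main obstacle to be this identification step: justifying the moving-boundary computation and organizing the passage to the limit so that only the $H^{-1}$ meaning of each term is used. The quadratic remainder estimate itself should be routine once the expansion of $A_{\eta+k}$ and the flux-trace lemma are in place.
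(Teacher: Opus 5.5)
Your proposal is correct and follows the paper's proof essentially step for step. The shared steps are the material variation $\dot v\in\mathcal V_0$ with divergence-free flux $A_\eta\nabla\dot v+A'[k]\nabla v_\eta$ and the flux-trace lemma; the remainder equation, where your grouping agrees with the paper's $\varepsilon\dot A\nabla(v_\varepsilon-v)+\varepsilon^2A_2\nabla v_\varepsilon$ at $\varepsilon=1$; the two-graph comparison for continuity; and the Eulerian identification via $\dot v-kv_z$ for Schwartz data followed by approximation, where your test-function pairing is equivalent to the paper's $H^{-1}$ convergence. The one point you only flag, and the paper checks explicitly, is that $\dot v-kv_z$ has finite Dirichlet energy (using $v_z\in H^1(\Pi)$ for smooth data). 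This is exactly what lets uniqueness of the variational extension identify its normal flux with $-G(\eta)(k\beta)$.
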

\begin{proof}
\emph{Material derivative and its flux.}
Fix $\eta,k\in H^2$ and $\psi\in H^1$, with $\|\eta\|_{H^2}\le M$. Set $A=A_\eta$ and
$A_\varepsilon=A_{\eta+\varepsilon k}$, and let $v,v_\varepsilon$ be the corresponding flattened harmonic extensions of the same trace $\psi$. Direct expansion gives
\[
A_\varepsilon=A+\varepsilon\dot A+\varepsilon^2A_2,\qquad
\dot A=
\begin{pmatrix}
0&-k_x\\
-k_x&2\eta_xk_x
\end{pmatrix},
\qquad
A_2=
\begin{pmatrix}
0&0\\
0&k_x^2
\end{pmatrix}.
\]
By the algebra property of $H^1$, we obtain
\[
\|\dot A\|_{H^1}\le C_M\|k\|_{H^2},
\qquad
\|A_2\|_{H^1}\le C\|k\|_{H^2}^2.
\]
Let $\dot v\in\mathcal V_0$ be the unique variational solution of
\[
\operatorname{div}(A\nabla\dot v)
=-\operatorname{div}(\dot A\nabla v).
\]
The product and zero-trace shift estimates imply
\begin{equation}\label{B21}
\|\nabla\dot v\|_{\mathcal Y^{1/2}}
\le C_M\|k\|_{H^2}\|\psi\|_{H^1}.
\end{equation}
Consequently, the material flux variation
\[
\dot Q=A\nabla\dot v+\dot A\nabla v
\]
is divergence-free and satisfies the same $\mathcal Y^{1/2}$ estimate, so the normal trace bound gives
\[
\|\dot Q_z(0)\|_2
\le C_M\|k\|_{H^2}\|\psi\|_{H^1}.
\]

\emph{Quadratic remainder.} Assume $|\varepsilon|\|k\|_{H^2}\le1$, so that the perturbed graphs remain uniformly bounded in $H^2$. The interior difference estimate in the proof of Proposition~\ref{prop:B-DN} applied to $\eta+\varepsilon k$ and $\eta$, gives
\[
\|\nabla(v_\varepsilon-v)\|_{\mathcal Y^{1/2}}
\le C_M|\varepsilon|\|k\|_{H^2}\|\psi\|_{H^1}.
\]
The zero-trace remainder $r_\varepsilon=v_\varepsilon-v-\varepsilon\dot v$
satisfies
\[
\operatorname{div}(A\nabla r_\varepsilon)
=-\operatorname{div}\bigl(
\varepsilon\dot A\nabla(v_\varepsilon-v)
+\varepsilon^2A_2\nabla v_\varepsilon\bigr).
\]
The product and zero-trace shift estimates therefore yield
\[
\|\nabla r_\varepsilon\|_{\mathcal Y^{1/2}}
\le C_M\varepsilon^2\|k\|_{H^2}^2\|\psi\|_{H^1}.
\]
The exact flux remainder is
\[
Q_\varepsilon-Q-\varepsilon\dot Q
=A\nabla r_\varepsilon
+\varepsilon\dot A\nabla(v_\varepsilon-v)
+\varepsilon^2A_2\nabla v_\varepsilon.
\]
This remainder is divergence-free and satisfies the same bound in $\mathcal Y^{1/2}$. Using the normal trace estimate, it follows
\begin{equation}\label{B22}
\|G(\eta+\varepsilon k)\psi-G(\eta)\psi
-\varepsilon\dot Q_z(0)\|_2
\le C_M\varepsilon^2\|k\|_{H^2}^2\|\psi\|_{H^1}.
\end{equation}
Because $\dot Q_z(0)$ depends linearly on $k$ and satisfies the preceding material flux bound, setting $\varepsilon=1$ and letting $k\to0$ proves Fr\'echet differentiability in $\mathcal L(H^1,L^2)$, with $DG(\eta)[k]\psi=\dot Q_z(0)$.

To establish continuity of the derivative, compare two base graphs with $\|\eta_j\|_{H^2}\le M$, keeping $k$ and $\psi$ fixed. Set $A_j=A_{\eta_j}$ and
\[
\dot A_j
=\left.\frac{\mathrm d}{\mathrm d\varepsilon}
A_{\eta_j+\varepsilon k}\right|_{\varepsilon=0}.
\]
Subtracting the variational equations for $\dot v_1$ and $\dot v_2$ and regrouping the terms yields
\begin{align*}
\operatorname{div}\bigl(A_1\nabla(\dot v_1-\dot v_2)\bigr)
&={}-\operatorname{div}\bigl(
(A_1-A_2)\nabla\dot v_2\\
&+(\dot A_1-\dot A_2)\nabla v_2
+\dot A_1\nabla(v_1-v_2)\bigr).
\end{align*}
Using
\[
\|\dot A_1-\dot A_2\|_{H^1}
\le C\|\eta_1-\eta_2\|_{H^2}\|k\|_{H^2},
\]
together with the preceding coefficient and solution estimates, the product and shift estimates give
\[
\|\nabla(\dot v_1-\dot v_2)\|_{\mathcal Y^{1/2}}
\le C_M\|\eta_1-\eta_2\|_{H^2}
\|k\|_{H^2}\|\psi\|_{H^1}.
\]
The divergence free flux difference
\begin{align*}
\dot Q_1-\dot Q_2
={}&A_1\nabla(\dot v_1-\dot v_2)
+(A_1-A_2)\nabla\dot v_2\\
&+(\dot A_1-\dot A_2)\nabla v_2
+\dot A_1\nabla(v_1-v_2),
\end{align*}
therefore satisfies
\[
\|\dot Q_1-\dot Q_2\|_{\mathcal Y^{1/2}}
\le C_M\|\eta_1-\eta_2\|_{H^2}
\|k\|_{H^2}\|\psi\|_{H^1}.
\]
Taking its normal trace yields
\[
\|(DG(\eta_1)[k]-DG(\eta_2)[k])\psi\|_2
\le C_M\|\eta_1-\eta_2\|_{H^2}
\|k\|_{H^2}\|\psi\|_{H^1}.
\]
Hence the derivative is locally Lipschitz with respect to the graph in the stated operator norm.

\emph{Identification of the boundary formula.} For smooth data, let $\Psi$ be the physical harmonic extension and $\delta\Psi$ its Eulerian variation at fixed physical coordinates. Differentiating the harmonic equation and the fixed trace condition, it follows
\[
\Delta\delta\Psi=0,\qquad
\delta\Psi|_{y=\eta}=-k\beta.
\]
Equivalently,
\[
\delta\Psi(x,z+\eta(x))
=\dot v(x,z)-k(x)v_z(x,z).
\]
Let
\[
q=\dot v-kv_z
=\delta\Psi\circ\Phi_\eta,
\qquad
\Phi_\eta(x,z)=(x,z+\eta(x)).
\]
Since $\det D\Phi_\eta=1$, the change of variables gives
\[
\int_{\Omega_\eta}|\nabla_{x,y}\delta\Psi|^2
\,\mathrm dx\,\mathrm dy
=
\int_\Pi A_\eta\nabla q\cdot\nabla q
\,\mathrm dx\,\mathrm dz.
\]
The upper ellipticity bound for $A_\eta$ therefore yields
\[
\|\nabla_{x,y}\delta\Psi\|_{L^2(\Omega_\eta)}
\le C_\eta\left(
\|\nabla\dot v\|_{L^2(\Pi)}
+\|\nabla(kv_z)\|_{L^2(\Pi)}
\right).
\]
The first term is finite by the variational estimate
for $\dot v$. For the smooth data considered here,
the previously established regularity of $v$ gives
$v_z\in H^1(\Pi)$, and
\[
\|\nabla(kv_z)\|_{L^2(\Pi)}
\le
\|k_x\|_{L^\infty}\|v_z\|_{L^2(\Pi)}
+\|k\|_{L^\infty}\|\nabla v_z\|_{L^2(\Pi)}
<\infty.
\]
Thus $\delta\Psi$ has finite Dirichlet energy.
Together with harmonicity and the trace $-k\beta$, this identifies $\delta\Psi$ as its variational harmonic extension by uniqueness. The additional smooth-data bounds in this identification step are used only before approximation; the uniform $H^2\times H^2\times H^1$ estimates were established above. Differentiating
$G(\eta)\psi=(\Psi_y-\eta_x\Psi_x)|_{y=\eta}$ yields
\begin{align*}
DG(\eta)[k]\psi
&=-G(\eta)(k\beta)
+k(\Psi_{yy}-\eta_x\Psi_{xy})|_{y=\eta}-k_x\nu\\
&=-G(\eta)(k\beta)-k\partial_x\nu-k_x\nu,
\end{align*}
where we have used $\Psi_{yy}=-\Psi_{xx}$ and
\[
\partial_x\nu
=(\Psi_{xx}+\eta_x\Psi_{xy})|_{y=\eta}.
\]
This proves \eqref{B20} for smooth data.

For general data, choose smooth triples converging to $(\eta,k,\psi)$ in $H^2\times H^2\times H^1$. The derivative bounds and continuity with respect to the graph imply strong $L^2$ convergence of the left-hand side. Proposition~\ref{prop:B-DN} and convergence of the graph slopes in $L^\infty$ give convergence of $\beta_n$ and
$\nu_n$ in $L^2$. Since $k_n\to k$ in $L^\infty$, the products $k_n\beta_n$ and $k_n\nu_n$ also converge in $L^2$. By symmetry, \eqref{B13} and \eqref{B14} imply boundedness
and continuity with respect to the graph for $G(\eta):L^2\to H^{-1}$ by duality.
Thus both terms on the right-hand side converge in $H^{-1}$, proving \eqref{B20} at the stated regularity, and their sum belongs to $L^2$ and satisfies \eqref{B19}.
\end{proof}

\begin{remark}
The divergence free material flux $\dot Q$ gives \eqref{B19} through the normal trace estimate, which requires only half a tangential derivative in the bulk. By contrast, separate estimates for the two terms in \eqref{B20} give only $H^{-1}$ control; the $L^2$ bound therefore depends on their cancellation, which is explicit at the flat graph:
\[
DG(0)[k]\psi
=-|D|(k|D|\psi)-\partial_x(k\psi_x).
\]
Writing $\xi$ for the output frequency and $\zeta$ for the frequency of $\psi$, so that $\xi-\zeta$ is the frequency of $k$, the bilinear Fourier symbol is
\[
m(\xi,\zeta)=\xi\zeta-|\xi||\zeta|.
\]
This symbol vanishes when $\xi\zeta\ge0$. If $\xi\zeta<0$, then $|\xi-\zeta|=|\xi|+|\zeta|$ and $|m(\xi,\zeta)|=2|\xi||\zeta|$. Consequently,
\[
|m(\xi,\zeta)|
\le 2|\xi-\zeta||\zeta|
\qquad\text{for all }\xi,\zeta\in\R.
\]
It follows from Young's convolution inequality and Plancherel's theorem that
\[
\|DG(0)[k]\psi\|_2
\le C\bigl\||\xi|\widehat k(\xi)\bigr\|_{L^1_\xi}
       \|\psi_x\|_2
\le C\|k\|_{H^2}\|\psi_x\|_2.
\]
For general $H^2$ graphs, the flux argument supplies the corresponding $L^2$ gain in \eqref{B19}, with constants controlled by the graph's $H^2$ bound and no slope-smallness assumption.
\end{remark}

\noindent\textbf{Funding}
The authors declare that no external funding was
received for the research, authorship, or publication
of this article.

\vskip 6mm
\noindent{\bf Declaration of competing interests}
The authors declare no competing interests.
\vskip 6mm

\noindent\textbf{Data availability statement.}
Data sharing is not applicable to this article as no data sets
were generated or analysed during the current study.

\end{document}